\newtheorem{thm}{Theorem}[section]
\newtheorem*{thm1}{Theorem 3.18}
\newtheorem{lem}[thm]{Lemma}
\newtheorem{cor}[thm]{Corollary}
\newtheorem{prop}[thm]{Proposition}
\theoremstyle{definition} 		
\newtheorem{defn}[thm]{Definition}
\newtheorem{remark}[thm]{Remark}
\def\nat{{\mathbb N}}
\def\A{{\mathcal A}}
\def\B{{\mathcal B}}
\def\C{{\mathcal C}}
\def\F{{\mathcal F}}
\def\G{{\mathcal G}}
\def\R{{\mathcal R}}
\def\U{{\mathcal U}}
\def\bs{{\mathbf s}}
\def\bt{{\mathbf t}}
\def\bu{{\mathbf u}}
\def\bw{{\mathbf w}}
\def\Tau{{\mathfrak T}}
\begin{document}
\title{Ramsey theory for words over an infinite alphabet}
\author{Vassiliki  Farmaki }

\begin{abstract}
A complete partition theory is presented for $\omega$-located words (and $\omega$-words), namely for located words $w=w_{n_1}\ldots w_{n_l}$, over an infinite alphabet $\Sigma=\{\alpha_1, \alpha_2, \ldots \}$, such that $ w_{n_i}\in \{\alpha_1, \alpha_2, \ldots, \alpha_{k_{n_i}} \}$ for every $1\leq i \leq l$, where $\vec{k}=(k_n)_{n\in\nat}\in \nat^{\nat}$ is a fixed increasing sequence. This theory strengthens in an essential way the classical Carlson, Furstenberg-Katznelson, and Bergelson-Blass-Hindman partition Ramsey theory for words over a finite alphabet. Consequences of this theory are strong simultaneous extensions of the classical Hindman, Milliken-Taylor partition theorems, and of a van der Waerden theorem  for general semigroups, extending results of Hindman-Strauss and Beiglb$\ddot{o}$ck.
\end{abstract}

\keywords{Ramsey theory, partition theorems, $\omega$-words, Schreier families}
\subjclass{Primary 05D10}

\maketitle
\baselineskip=18pt
\pagestyle{plain}              


\section*{Introduction}

The concept of a word over a finite alphabet was introduced in Ramsey theory by Hales-Jewett \cite{HaJ}, providing a purely combinatorial proof of van der Waerden's theorem \cite{vdW} on the existence of arbitralily long arithmetic progressions in one of the cells of any partition of the positive integers. Subsequently, words over a finite alphabet, in the work of Carlson \cite{C} and Furstenberg-Katznelson \cite{FuK}, proved an essential tool for the unification of the two branches of Ramsey theory, the one involving Ramsey's classical theorem \cite{R} and Nash-Williams type partition theorem \cite{NW}, and its extensions by Hindman \cite{H} and Milliken \cite{M}-Taylor \cite{T}, the other the van der Waerden and the Hales-Jewett theorems, just mentioned. These tools were extended and strengthened, with the systematic introduction of Schreier sets, in \cite{FN1}, \cite{FN2}. 

The concept of a located word over a finite alphabet $\Sigma$ introduced formally by Bergelson-Blass-Hindman in \cite{BBH} as a function from a finite subset of the set $\nat$ of natural numbers, the support and location of the word, into the alphabet $\Sigma$. They established a partition theorem for located words over a finite alphabet and also a Ramsey type and a Nash-Williams type partition theorem for located words over a finite alphabet.

In all these results, the alphabet $\Sigma$ under consideration is always assumed to be \textbf{finite}. It is clear that these combinatorial results do not generally hold if $\Sigma$ is assumed to be infinite, since e.g. it is generally impossible to find a monochromatic infinite arithmetic progression for every finite coloring of the set of natural numbers. However, it might still be possible to relax the strict finiteness condition for the alphabet. 

In the present work we introduce a relaxation as follows: we start with an \textbf{infinite} alphabet $\Sigma=\{\alpha_1, \alpha_2, \ldots \}$, ordered according to the natural numbers, and a sequence $\vec{k}=(k_n)_{n\in\nat}$ of positive integers, the \textbf{dominating} sequence, and we define an $\omega$-located word over $\Sigma$ dominated by $\vec{k}$ to be a located word $w=w_{n_1}\ldots w_{n_l}$ over $\Sigma$ such that in addition $ w_{n_i}\in \{\alpha_1, \alpha_2, \ldots, \alpha_{k_{n_i}} \}$ for every $1\leq i \leq l$. Similarly a word $w=w_{1}\ldots w_{l}$ over $\Sigma$ is an $\omega$-word over $\Sigma$ dominated by $\vec{k}$ if $ w_{i}\in \{\alpha_1, \alpha_2, \ldots, \alpha_{k_{i}} \}$ for every $1\leq i \leq l$. Thus words and located words are $\omega$-words and $\omega$-located words, in case the dominating sequence $(k_n)_{n\in\nat}$ is a constant sequence.

It turns out that the whole of infinitary Ramsey theory can be obtained for an infinite alphabet under the condition of functional domination. We thus obtain:

(a) Partition theorems for $\omega$-located words over an 
infinite countable alphabet (in Theorems~\ref{thm:block-Ramsey1}, ~\ref{thm:block-Ramsey2}, Corollary~\ref{central}), as well as partition theorems for (unlocated) $\omega$-words (in Theorem~\ref{thm:block-Ramsey3}) providing proper extensions of Bergelson-Blass-Hindman's partition theorem (Theorem 4.1 in \cite{BBH}) for located words over a finite alphabet and Carlson's partition theorem for words over a finite alphabet (Lemma 5.9 in \cite{C}) respectively. As  consequences of these theorems, 
we prove partition theorems for semigroups (Corollary~\ref{thm:van der Waerden}), including the results of Hindman and Strauss in \cite{HS} (Theorems 14.12, 14.15), which are simultaneous extensions of the Hindman \cite{H} and the van der Waerden \cite{vdW} partition theorems.

(b) Extended Ramsey type partition theorems for each countable ordinal $\xi$ for variable and constant $\omega$-located words (in Theorems~\ref{thm:block-Ramsey} and ~\ref{thm:block-Ramsey8}), involving the $\xi$-Schreier sequences of $\omega$-located words, which results imply an ordinal extension of Bergelson-Blass-Hindman's Ramsey type partition theorem for words over a finite alphabet (Theorem 5.1 in  \cite{BBH}), corresponding to the case of finite ordinals, and strengthen Furstenberg-Katznelson's Ramsey type partition theorem (Theorems 2.7 and 3.1 in \cite{FuK}) for words over a finite alphabet 
(Theorem~\ref{cor:k-Ramsey words}). Furthermore consequences of Theorem~\ref{thm:block-Ramsey} are multidimentional partition theorems for semigroups corresponding to each countable order $\xi$ 
(Corollaries~\ref{cor:multi}, ~\ref{thm:van der Waerden1}) providing strong simultaneous extension of the block-Ramsey partition theorem for every countable ordinal, proved in \cite{FN1}, and van der Waerden's theorem in \cite{vdW} and extending the partition theorem of Beiglb$\ddot{o}$ck in \cite{Be} for commutative semmigroups, corresponding to the case of finite ordinals. 

(c) A partition theorem for infinite orderly sequences of variable $\omega$-located words (Theorem~\ref{cor:blockNW}), which can be said to be a Nash-Williams type partition theorem for variable $\omega$-located words, strengthening and extending the partition theorem for infinite sequences of variable located words over a finite alphabet proved in  \cite{BBH}(Theorem 6.1) and also Carlson's partition theorem (Theorem 2 in \cite{C}) for infinite sequences of variable words over a finite alphabet.  
As a consequence of Theorem~\ref{cor:blockNW} we prove, in Theorems~\ref{cor:centralNW} and ~\ref{cor:ncentralNW}, partition theorems for infinite sequences in a commutative and in a noncommutative semigroup, respectively which are strong simultaneous extensions of the infinitary partition theorem of Milliken \cite{M}, Taylor \cite{T} and van der Waerden \cite{vdW} applied to semigroups. In order to state Theorem~\ref{cor:centralNW} below, let $\nat=\{1,2,\ldots\}$ be the set of positive integers and for a sequence $(x_n)_{n\in\nat}$ in a semigroup $(X,+)$ let 
\begin{center}
$FS\big((x_n)_{n\in\nat}\big)=\{x_{n_1}+\ldots+x_{n_\lambda} : \lambda\in\nat, n_1<\cdots<n_\lambda\in\nat\}$, and
\end{center}
\begin{center}
$\big[FS\big((x_n)_{n\in\nat}\big)\big]^\omega = \{(y_n)_{n\in\nat} : y_n\in FS\big((x_n)_{n\in\nat}\big)$ and $y_n<y_{n+1}$ for every $n\in\nat\}$.
\end{center}
For $y=x_{n_1}+\ldots+x_{n_\lambda}, z=x_{m_1}+\ldots+x_{m_\nu}\in FS\big((x_n)_{n\in\nat}\big)$ we write $y<z$ if $n_\lambda<m_1$.

\begin{thm1}
Let $(X,+)$ be a commutative semigroup, $\vec{k}=(k_n)_{n \in \nat}\subseteq \nat$ an increasing sequence and $(y_{l,n})_{n \in \nat}$ for every $l \in \nat,$ sequences in $X$. If $\U \subseteq X^{\nat}$ is a pointwise closed family of $X^{\nat}$, then there exist sequences $(E_n)_{n \in \nat}$ and $(H_n)_{n \in \nat}$ of non-empty finite subsets of $\nat$ satisfying $\max E_n<\min E_{n+1}$, $H_n\subseteq E_{n}$ for every $n \in \nat$ and a sequence $(\beta_n)_{n \in \nat}\subseteq X$, where $\beta_n=\sum_{j\in E_n \setminus H_n}y_{l^n_j,j}$ for $1\leq l^n_j \leq k_j$, such that for every function $f: \nat\rightarrow \nat$ with $f(n)\leq k_n$ for every $n\in\nat$
\newline
either  $\big[ FS\big((\beta_{n}+\sum_{t \in H_{n}}y_{f(n), t})_{n\in\nat}\big)\big]^\omega\subseteq \U$, 
or $\big[ FS\big((\beta_{n}+\sum_{t \in H_{n}}y_{f(n), t})_{n\in\nat}\big)\big]^\omega\subseteq X^{\nat}\setminus\U$.
\end{thm1}

(d) An Ellentuck type characterization of completely Ramsey partitions of the set of infinite orderly sequences of $\omega$-located words (in Theorem~\ref{cor:Baireproperty}).

The essentially stroger nature of this Ramsey theory for functionally dominated words over an infinite alphabet developed in this paper makes it reasonable to expect that will find substantial applications in Ramsey ergodic theory and in various other branches of mathematics.

\section{Partition theorems for $\omega$-located words} 

The purpose of this section is to prove a partition theorem for $\omega$-located words over an 
infinite countable alphabet $\Sigma=\{\alpha_1, \alpha_2, \ldots \}$, dominated by a sequence $\vec{k}=(k_n)_{n\in\nat}$ (Theorems~\ref{thm:block-Ramsey1}, ~\ref{thm:block-Ramsey2}, Corollary~\ref{central}) as well as a partition theorem for unlocated $\omega$-words over the alphabet $\Sigma$ dominated by the sequence $\vec{k}$ (Theorem~\ref{thm:block-Ramsey3}) providing proper extensions of Bergelson-Blass-Hindman's partition theorem (Theorem 4.1 in \cite{BBH}) for located words over a finite alphabet and of Carlson's partition theorem (Lemma 5.9 in \cite{C}) for unlocated words over a finite alphabet respectively. Consequences of this theory are strong simultaneous extentions of the partition theorems of Hindman \cite{H} and van der Waerden \cite{vdW} for general semigroups, including the partition theorems of Hindman and Strauss in \cite{HS} (Theorems 14.12, 14.15).

Let $\Sigma=\{\alpha_1, \alpha_2, \ldots \}$ be an infinite countable alphabet, ordered according to the natural numbers, and let $\vec{k}=(k_n)_{n\in\nat}\subseteq \nat$ be a sequence of natural numbers. An \textit{$\omega$-located word} over $\Sigma$ dominated by $\vec{k}$ is a function $w$ from a non-empty,  finite subset $F$ of $\nat$ into the alphabet $\Sigma$ such that $w(n)=w_n\in \{\alpha_1, \alpha_2, \ldots, \alpha_{k_n}\}$ for every $n\in F$. So, the set 
$L(\Sigma, \vec{k})$ of all (constant) $\omega$-located words over $\Sigma$ dominated by $\vec{k}$ is:   

$L(\Sigma, \vec{k})= \{w=w_{n_1}\ldots w_{n_l} : l\in\nat, n_1<\cdots<n_l\in \nat$ and $ w_{n_i}\in \{\alpha_1, \alpha_2, \ldots, \alpha_{k_{n_i}} \}$ 
\begin{center}
for every $1\leq i \leq l \}$,
\end{center}
where the set $dom(w)=\{n_1,\ldots ,n_l\}$ is the \textit{domain} of the $\omega$-located word $w=w_{n_1}\ldots w_{n_l}$.

Let $\upsilon \notin \Sigma$ be an entity which is called a \textit{variable}. The set $L(\Sigma, \vec{k} ; \upsilon) $ of all the \textit{variable  $\omega$-located word} over $\Sigma$ dominated by $\vec{k}$ is: 

$L(\Sigma, \vec{k} ; \upsilon) = \{w=w_{n_1}\ldots w_{n_l} : l\in\nat, n_1<\cdots<n_l\in \nat, w_{n_i}\in \{\upsilon, \alpha_1, \alpha_2, \ldots, \alpha_{k_{n_i}} \}$ 
\begin{center}
for every $1\leq i \leq l$ and there exists $1\leq i \leq l$ with $w_{n_i}=\upsilon \}$. 
\end{center}
We set $L(\Sigma\cup\{\upsilon\}, \vec{k})=L(\Sigma, \vec{k})\cup L(\Sigma, \vec{k} ; \upsilon)$.

We endow the set $L(\Sigma\cup\{\upsilon\}, \vec{k})$ with a relation defining for $w,u\in L(\Sigma\cup\{\upsilon\}, \vec{k})$
\begin{center}
$w < u \Leftrightarrow \max dom(w)< \min dom(u)$.
\end{center}
For $w=w_{n_1}\ldots w_{n_r},  u=u_{m_1}\ldots u_{m_l}\in L(\Sigma \cup\{\upsilon\}, \vec{k})$ with $w<u$ we define the concatenating word $w \star u = w_{n_1}\ldots w_{n_r}u_{m_1}\ldots u_{m_l}\in L(\Sigma \cup\{\upsilon\}, \vec{k})$. 

We will define now for every $p\in\nat\cup\{0\}$ the functions 
\begin{center}
$T_p : L(\Sigma\cup\{\upsilon\}, \vec{k}) \longrightarrow L(\Sigma\cup\{\upsilon\}, \vec{k})$. 
\end{center}
Let $w=w_{n_1}\ldots w_{n_l}\in L(\Sigma\cup\{\upsilon\}, \vec{k})$. We set $T_0(w)=w$ and for $p\in\nat$ we set 
$T_p(w)=u_{n_1}\ldots u_{n_l}$, where, for $1\leq i\leq l$, we define  $u_{n_i}=w_{n_i}$ if $w_{n_i}\in \Sigma$, $u_{n_i}=\alpha_{p} $ if $w_{n_i}=\upsilon$ and $k_{n_i}\geq p$ and finally $u_{n_i}=\alpha_{k_{n_i}} $ if $w_{n_i}=\upsilon$ and $k_{n_i}<p$.  
\newline
We remark that for every $p\in\nat\cup\{0\}$ and $w=w_{n_1}\ldots w_{n_l}\in L(\Sigma\cup\{\upsilon\}, \vec{k})$ we have $dom(T_p(w))=dom(w)$, $T_p(w)=w$ if $w\in L(\Sigma, \vec{k})$ and $T_p(w)=T_{p_w}(w)$ if $w\in L(\Sigma, \vec{k} ; \upsilon)$ and $p\geq p_w=k_{n_w}$, where $n_w=\max \{n_i : 1\leq i \leq l, w_{n_i}=\upsilon \}$. Also, $T_p(w\star u)=T_p(w)\star T_p(u)$ for $w,u\in L(\Sigma\cup\{\upsilon\}, \vec{k})$ with $w<u$ and $T_p( L(\Sigma\cup\{\upsilon\}, \vec{k}))=L(\Sigma, \vec{k})$ for every $p\in\nat$.

With the previous terminology we can state the partition theorem for $\omega$-located words.
\begin{thm}
[\textsf{Partition theorem for $\omega$-located words}]
\label{thm:block-Ramsey1}
Let $\Sigma=\{\alpha_1, \alpha_2, \ldots \}$ be an infinite countable alphabet, $\upsilon \notin \Sigma$ a variable, $\vec{k}=(k_n)_{n\in\nat}\subseteq \nat$ and $r,s\in\nat$. If $L(\Sigma, \vec{k} ; \upsilon)=A_1\cup\cdots\cup A_r$ and 
$L(\Sigma, \vec{k})=C_1\cup\cdots\cup C_s$, then there exist a sequence $(w_n)_{n\in\nat}\subseteq L(\Sigma, \vec{k} ; \upsilon)$ with $w_n<w_{n+1}$ for every $n\in\nat$ and $1\leq i_0 \leq r$, $1\leq j_0 \leq s$ such that
\begin{center}
$T_{p_1}(w_{n_1})\star \ldots \star T_{p_\lambda}(w_{n_\lambda})\in A_{i_0}$, 
\end{center}
for every $\lambda\in\nat$, $n_1<\cdots<n_\lambda\in\nat$, $p_1,\ldots,p_\lambda\in\nat\cup\{0\}$ such that $0\leq p_i \leq k_{n_i}$ for every $1\leq i \leq \lambda$ and $0 \in \{ p_1,\ldots,p_\lambda\}$; and 
\begin{center}
$T_{p_1}(w_{n_1})\star \ldots \star T_{p_\lambda}(w_{n_\lambda})\in C_{j_0}$, 
\end{center}
for every $\lambda\in\nat$, $n_1<\cdots<n_\lambda\in\nat$ and $p_1,\ldots,p_\lambda \in\nat$ such that $1\leq p_i \leq k_{n_i}$ for every $1\leq i \leq \lambda$.
\end{thm}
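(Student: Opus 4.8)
The plan is to prove this via an infinitary Ramsey / Hales–Jewett–Carlson type argument, using the structure of the operators $T_p$ together with an ultrafilter (or idempotent-ultrafilter) machinery on a suitable semigroup of $\omega$-located words. The key observation is that the set $L(\Sigma\cup\{\upsilon\}, \vec k)$, equipped with the partial operation $\star$ (defined for $w<u$), together with the family of operators $\{T_p : p\in\nat\cup\{0\}\}$, forms a structure analogous to the ``variable words'' setup of Carlson and of Bergelson–Blass–Hindman, but now the ``number of colors available to a variable'' at coordinate $n$ is exactly $k_n$, which varies. So the first step is to set up this algebraic framework carefully: extend $\star$ to a genuine semigroup operation by passing to finite $<$-increasing sequences (``blocks'') of variable $\omega$-located words, and check that $T_p$ commutes with concatenation, i.e. $T_p(w\star u)=T_p(w)\star T_p(u)$, which is already recorded in the excerpt.

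\textbf{Main steps.} First I would consider the semigroup $S$ of all finite $<$-increasing concatenations $w_{n_1}\star\cdots\star w_{n_\lambda}$ of variable $\omega$-located words, and in its Stone–Čech compactification $\beta S$ locate a minimal idempotent $p$ lying in a carefully chosen closed subsemigroup (the one respecting the domination constraint: at each coordinate $n$ only the letters $\alpha_1,\dots,\alpha_{k_n}$ and $\upsilon$ are allowed). Second, using that the coloring $L(\Sigma,\vec k;\upsilon)=A_1\cup\cdots\cup A_r$ has some cell $A_{i_0}\in p$, and likewise $C_{j_0}\in q$ for an appropriate related idempotent $q$ governing the constant words obtained by substituting $1\le p_i\le k_{n_i}$, I would run the standard recursive construction: having chosen $w_{n_1},\dots,w_{n_{m-1}}$, use the idempotency and the fact that $A_{i_0}$ (resp.\ $C_{j_0}$) is a member of the relevant ultrafilter to find $w_{n_m}$ with $w_{n_{m-1}}<w_{n_m}$ such that every admissible concatenation $T_{p_1}(w_{n_1})\star\cdots\star T_{p_m}(w_{n_m})$ ending in coordinate block $m$ — with $0\in\{p_1,\dots,p_m\}$ for the $A$-case and all $p_i\ge1$ for the $C$-case — lands in the correct cell. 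The crucial compatibility point is that $T_p$ preserves $\star$ and that the constraint $0\le p_i\le k_{n_i}$ exactly matches the domination hypothesis $w_{n_i}\in L(\Sigma,\vec k;\upsilon)$, so the $T_p$-images are all genuine $\omega$-located words dominated by $\vec k$.

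\textbf{Alternative combinatorial route.} If one prefers to avoid ultrafilters, I would instead argue by induction, reducing Theorem~\ref{thm:block-Ramsey1} to a ``one-step'' stabilization lemma in the spirit of the Graham–Rothschild / Hales–Jewett pigeonhole: for a given finite coloring, pass to an infinite sub-collection of variable words so that the color of $T_{p_1}(w_{n_1})\star\cdots\star T_{p_\lambda}(w_{n_\lambda})$ depends only on combinatorial data that stabilizes. The heart here is handling the variable positions: inside a single variable word $w_n$ the substitution $T_p$ ranges over $p\in\{1,\dots,k_n\}$ (and the truncation clause $u_{n_i}=\alpha_{k_{n_i}}$ when $k_{n_i}<p$ is what makes the family of substitutions finite and coherent across coordinates), so one repeatedly applies a finite-alphabet Bergelson–Blass–Hindman / Carlson step and then diagonalizes over $n$.

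\textbf{Main obstacle.} The hard part will be bookkeeping the interaction between the two colorings (the $A_i$ on variable words and the $C_j$ on constant words) simultaneously, so that a \emph{single} sequence $(w_n)_{n\in\nat}$ works for both conclusions at once; this is where one needs the idempotent governing variable concatenations to ``project'' correctly onto the idempotent governing constant concatenations under the maps $T_p$, $p\ge1$. Equivalently, in the combinatorial approach, one must run the stabilization for the refined coloring that records, for each pattern of $p_i$'s (whether or not $0$ occurs), which cell the concatenation falls into — a coloring by $r\cdot s$-ish many values — and then extract the monochromatic sequence. Managing the varying bound $k_n$ inside this diagonalization, while keeping all $T_{p_i}$-images admissible, is the technical crux; everything else follows the classical pattern.
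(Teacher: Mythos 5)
Your main route is essentially the paper's proof: it works with idempotent ultrafilters in $\beta X$ for $X=L(\Sigma\cup\{\upsilon\},\vec k)$ (the paper totalizes $\star$ by an overlay operation $+$ on located words rather than by passing to blocks, but this is cosmetic), takes a minimal idempotent $\mu_1$ in the closed subsemigroup $\theta Y$ attached to the constant words, and runs the standard recursive construction using $T_p(w\star u)=T_p(w)\star T_p(u)$. The one point you flag as the ``main obstacle'' --- making the variable-word idempotent project onto the constant-word idempotent under every $T_p$ simultaneously --- is resolved in the paper exactly along the lines you gesture at: $\theta Z$ (the variable words) is a two-sided ideal of $\theta X$, hence contains an idempotent $\mu$ with $\mu\le\mu_1$ and minimal for $\theta X$; since each $\beta T_p$ is a continuous homomorphism of $\theta X$ into $\theta Y$ fixing $\theta Y$ pointwise, one gets $\beta T_p(\mu)\le\beta T_p(\mu_1)=\mu_1$, and minimality of $\mu_1$ in $\theta Y$ forces $\beta T_p(\mu)=\mu_1$ for all $p\ge1$ at once. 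With that identity together with $\mu+\mu_1=\mu_1+\mu=\mu$ the two colorings are governed by the single pair $(\mu,\mu_1)$, so your outline is sound and matches the paper's argument.
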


In the proof of Theorem~\ref{thm:block-Ramsey1} we will apply some results of the theory of left compact semigroups, which we mention below.  

\subsection*{Left compact semigroups}

A non-empty, \textit{left compact semigroup} is a semigroup $(X, +)$, $X\neq \emptyset$ endowed with a topology $\Tau$ such that $(X,\Tau)$ is a compact Hausdorff space and the maps $f_y : X\longrightarrow X$ with $f_y(x)=x+y$ for $x\in X$ are continuous for every $y\in X$. 

Let $(X, +)$ be a semigroup. An element $x$ of $X$ is called \textit{idempotent} of $(X, +)$ if $x+x=x$. According to a fundamental result due to Ellis, every non-empty, left compact semigroup contains an idempotent. On the set of all idempotents of $(X, +)$ is defined a partial order $\leq$ by the rule
\begin{center}
$x_1 \leq x_2 \Longleftrightarrow x_1+x_2=x_2+x_1=x_1$.
\end{center}
An idempotent $x$ of $(X, +)$ is called \textit{minimal for $X$} if every idempotent $x_1$ of $X$ satisfing the relation $x_1\leq x $ is equal to $x$. According to \cite{FuK}, for every idempotent $x$ of a non-empty, left compact semigroup $(X, +)$ there exists an idempotent $x_1$ of $X$ which is minimal for $X$ and $x_1\leq x$. Also, every two-sided ideal of $X$ contains all the minimal for $X$ idempotents of $X$ (a subset $I$ of $X$ is called \textit{two-sided ideal} of $(X,+)$ if $X+I\subseteq I$ and $I+X\subseteq I$).

\subsection*{Ultrafilters}

Let $X$ be a non-empty set. An \textit{ultrafilter} on the set $X$ is a zero-one finite additive measure $\mu$ defined on all subsets of $X$. The set of all ultrafilters on the set $X$ is denoted by $\beta X$. So, $\mu\in\beta X$ if and only if 
\begin{itemize}
\item[{(i)}] $\mu(A)\in\{0,1\}$ for every $A\subseteq X$, $\mu(X)=1$, and 
\item[{(ii)}] $\mu(A\cup B)=\mu(A)+\mu(B)$ for every $A,B\subseteq X$ with $A\cap B=\emptyset$.
\end{itemize}
For every $x\in X$ is defined the ultrafilter $\mu_x$ on $X$ corresponding a set $A\subseteq X$ to $\mu_x(A)=1$ if $x\in A$ and  $\mu_x(A)=0$ if $x\notin A$. The ultrafilters $\mu_x$ for $x\in X$ are called \textit{principal ultrafilters} on $X$. So, $\mu$ is a non-principal ultrafilter on $X$ if and only if  $\mu(A)=0$ for every finite subset $A$ of $X$. It is easy to see that for $\mu\in\beta X$ and $A\subseteq X$ with $\mu(A)=1$ we have $\mu(X\setminus A)=0$, $\mu(B)=1$ for 
every $B\subseteq X$ with $A\subseteq B$ and $\mu(A\cap B)=1$ for every $B\subseteq X$ with $\mu(B)=1$.

The set $\beta X$ becomes a compact Hausdorff space if it be endowed with the topology $\Tau$ which has basis the family $\{A^* :  A\subseteq X \}$, where $A^*=\{\mu\in\beta X : \mu(A)=1 \}$. It is easy to see that $(A\cap B)^*=A^*\cap B^*$, $(A\cup B)^*=A^*\cup B^*$ and $(X\setminus A)^*=\beta X\setminus A^*$ for every $A,B\subseteq X$. We always consider the set $\beta X$ endowed with the topology $\Tau$. 

Let a function $T : X\longrightarrow Y$. Then the function 
\begin{center}
$\beta T : \beta X\longrightarrow \beta Y$  with  $\beta T(\mu)(B)=\mu(T^{-1}(B))$ for $\mu\in\beta X$ and $B\subseteq Y$  
\end{center}
is always continuous.

If $(X,+)$ is a semigroup, then a binary operation $+$ is defined on $\beta X$ corresponding to every $\mu_1, \mu_2\in \beta X$ the ultrafilter $\mu_1 + \mu_2\in \beta X$ given by
\begin{center}
$(\mu_1 + \mu_2)(A)= \mu_1(\{x\in X : \mu_2(\{y\in X : x+y\in A\})=1\})$ for every $A\subseteq X$.
\end{center}
With this operation the set $\beta X$ becomes a semigroup and for every $\mu\in\beta X$ the function $f_\mu : \beta X\longrightarrow \beta X$ with $f_\mu(\mu_1)=\mu_1+\mu$ is continuous.

Hence, if $(X,+)$ is a semigroup, then $(\beta X, +)$ becomes a left compact semigroup.

\begin{proof}[Proof of Theorem~\ref{thm:block-Ramsey1}]
Let $X=L(\Sigma\cup\{\upsilon\}, \vec{k})$, $Y=L(\Sigma, \vec{k})$ and $Z=L(\Sigma, \vec{k} ; \upsilon)$.  
We endow the set $X$ with an operation $+$ defining for $w=w_{n_1}\ldots w_{n_r}, u=u_{m_1}\ldots u_{m_l}\in X $  
\begin{center}
$w+u=z_{q_1}\ldots z_{q_s}\in X$,
\end{center}
where $\{q_1,\ldots,q_s\}=\{n_1,\ldots,n_r\}\cup \{m_1,\ldots,m_l\}$ with $q_1<\ldots<q_s$ and, for $1\leq i\leq s$, $z_{q_i}=w_{q_i}$ if $q_i\notin \{m_1,\ldots,m_l\}$, $z_{q_i}=u_{q_i}$ if $q_i\notin \{n_1,\ldots,n_r\}$ and if $q_i\in\{n_1,\ldots,n_r\}\cap \{m_1,\ldots,m_l\}$, 
then $z_{q_i}=\alpha_{\max\{\mu, \nu\}}$ in case $w_{q_i}=\alpha_\mu$ and $u_{q_i}=\alpha_\nu$, and 
$z_{q_i}=\upsilon$ in case either $w_{q_i}=\upsilon$ or $w_{q_i}=\upsilon$.
\newline
Observe that $(X, +)$ is a semigroup and $(Y, +)$, $(Z, +)$ are subsemigroups of $(X,+)$. Also, $w+u=w\star u$ if $w, u\in X$ and $w<u$.

Since $(X, +)$ is a semigroup, $(\beta X, +)$ has the structure of a left compact semigroup as described above. For every $A\subseteq X$ and $w\in X$ we set
\begin{center}
$A_w=\{u\in A : w<u\}$ and
\end{center}
\begin{center}
$\theta A=\bigcap \{(A_w)^* : w\in X \}$,
\end{center}
where $(A_w)^*=\{\mu\in\beta X : \mu(A_w)=1 \}$. 

\noindent {\bf Claim 1} 
Let $A$ be a non-empty subset of $X$ satisfing
\begin{itemize}
\item[{(i)}] $w\star u\in A$ for every $w,u\in A$ with $w<u$ and
\item[{(ii)}] for every $n\in\nat$ there exists $u\in A$ with $n<\min dom(u)$.
\end{itemize}
Then $\theta A\subseteq A^*$ is a non-empty left compact subsemigroup of $\beta X$ and all the elements of $\theta A$ are non-principal ultrafilters on $X$.

Indeed, for every $w\in X$ the set $(A_w)^*=\beta X\setminus(X\setminus A_w)^*$ is a compact subset of $\beta X$ , so $\theta A$ is a compact subset of $\beta X$. The set $A$ satisfies property (ii), so for every $w\in X$, we have $A_w\neq\emptyset$ and consequently $(A_w)^*\neq\emptyset$, since $\mu_u\in (A_w)^*$ if $u\in A_w$. Moreover, according to property (ii), the family $\{(A_w)^* : w\in X\}$ has the finite intersection property, hence $\theta A\neq\emptyset$. Since $A$ satisfies property (i), 
$(\theta A, +)$ is a semigroup. Indeed, for $\mu_1,\mu_2\in \theta A$ and $w\in X$, 
\newline
$\mu_1 +\mu_2(A_w)=\mu_1(\{u_1\in A_w : \mu_2(\{u_2\in A_{u_1} : u_1+u_2\in A_w\})=1 \})=$ 
\newline
$=\mu_1(\{u_1\in A_w : \mu_2( A_{u_1})=1 \})=\mu_1(A_w)=1$. 
\newline
Hence, $\theta A$ is a non-empty left compact subsemigroup of $\beta X$. Since $w\notin A_w$, we have that $\mu_w\notin \theta A$ for every $w\in X$, so every $\mu\in \theta A$ is a non-principal ultrafilter on $X$.

According to the claim, $ \theta X, \theta Y$ and $\theta Z$ are non-empty left compact subsemigroups of $\beta X$ consisted of non-principal ultrafilters on $X$. Notice that $\theta Y \subseteq \theta X $ and that $\theta Z\subseteq \theta X$. Moreover, $\theta Z$ is a two sided ideal of $\theta X$. Indeed, for $\mu_1\in \theta Z$, $\mu_2\in \theta X$ and $w\in X$,
\newline
$\mu_1+\mu_2(Z_w)=\mu_1(\{u_1\in Z_w : \mu_2(\{u_2\in X_{u_1} : u_1+u_2\in Z_w\})=1 \})=$
\newline
$=\mu_1(\{u_1\in Z_w : \mu_2(X_{u_1})=1 \})=\mu_1(Z_w)=1=\mu_2+\mu_1(Z_w)$. 

Let $p\in\nat$ and let the continuous function $\beta T_p : \beta X\longrightarrow \beta X$ with 
$\beta T_p(\mu)(A)=\mu((T_p)^{-1}(A))$ for every $\mu\in\beta X$ and $A\subseteq X$. We note that:

(i) $\beta T_p(\theta X)\subseteq \theta Y\subseteq \theta X$, since, for $\mu\in \theta X$ and $w\in X$, 
\newline
$\beta T_p(\mu)(Y_w)=\mu(\{u\in X_w : T_p(u)\in Y_w \})=\mu(X_w)=1$,

(ii) $\beta T_p(\mu)=\mu$ for every $\mu\in\theta Y$, since, for $\mu\in \theta Y$ and $A\subseteq X$,
\newline 
$\beta T_p(\mu)(A)=\mu(\{u\in Y : T_p(u)= u\in A\})=\mu(A\cap Y)=\mu(A)$, and

(iii) for $\mu_1, \mu_2\in \theta X$ and $A\subseteq X$, we have 
\newline
$\beta T_p(\mu_1+\mu_2)(A)= \mu_1(\{u_1\in X : \mu_2(\{u_2\in X_{u_1} : T_p(u_1+u_2)\in A\})=1\})=$
\newline
$=\mu_1(\{u_1\in X : \mu_2(\{u_2\in X_{u_1} : T_p(u_1)+T_p(u_2)\in A\})=1\})=$
\newline
$=\beta T_p(\mu_1)+ \beta T_p(\mu_2)(A)$.

Let $\mu_1$ be an idempotent in in the non-empty, left compact semigroup $\theta Y$ minimal for $\theta Y$. Since $\theta Z$ is a two sided ideal of the left compact semigroup $\theta X$ and 
$\mu_1\in \theta Y\subseteq\theta X$ is an idempotent of $\theta X$, there exists an idempotent $\mu\in \theta Z\subseteq \beta X$ minimal for $\theta X$ with $\mu\leq \mu_1$. Since for each $p\in\nat$ the restriction of $\beta T_p$ to $\theta X$ is an homomorphism, we have that $\beta T_p(\mu)\leq \beta T_p(\mu_1)$ for every $p\in\nat$. But $\beta T_p(\mu_1)= \mu_1$ for every $p\in\nat$, since $\mu_1\in\theta Y$, hence, $\beta T_p(\mu)\leq \mu_1$ for every $p\in\nat$. Now, since $\mu_1$ is minimal for $\theta Y$ and 
$\beta T_p(\mu)\in \theta Y$ for every $p\in\nat$, we have $\beta T_p(\mu)=\mu_1$ for every $p\in\nat$.

In conclution, for every idempotent $\mu_1\in\theta Y\subseteq \theta X\subseteq \beta X$ minimal for $\theta Y$ there exists an idempotent $\mu\in\theta Z \subseteq\theta X\subseteq \beta X$ minimal for $\theta X$ such that: $\mu+\mu=\mu$, $\mu_1+\mu_1=\mu_1 $, $\mu_1=\beta T_p(\mu)$ for every $p\in\nat$, 
and $\mu+\mu_1=\mu_1+\mu=\mu $.

We will construct, by induction on $n$, the required sequence $(w_n)_{n\in\nat}\subseteq Z$. Since, $Z=A_1\cup\cdots\cup A_r$ and $Y=C_1\cup\cdots\cup C_s$, there exist $1\leq i_0 \leq r$, $1\leq j_0 \leq s$ such that $\mu(A_{i_0})=1$ and $\mu_1(C_{j_0})=1$. Let $w\in Z$. Since $ \mu_1=\beta T_p(\mu)=\beta T_p(\mu)+\mu_1$ and $\mu=\mu+\mu=\beta T_p(\mu)+\mu=\mu+\mu_1$ for every 
$p\in\nat$ with $p\leq k$, starting with $w\in Z$, $B_1=A_{i_0}$ and $D_1=C_{j_0}$, can be constructed an increasing sequence $w<w_1<w_2<\cdots$ in $Z$ and two decreasing sequences $Z\supseteq B_1\supseteq B_2\supseteq \cdots$, and $Y\supseteq D_1\supseteq D_2\supseteq \cdots$ such that for every $n\in\nat$ to satisfy:
\newline
$\mu(B_n)=1$ and $\mu_1(D_n)=1$,
\newline
$w_n\in B_n$ and $T_p(w_n)\in D_n$ for every $p\in\nat$ with $p\leq k_n$,  
\newline
$B_{n+1}=\{u\in (B_n)_{w_n} : w_n + u\in B_n$ and $ T_{p}(w_n)+ u\in B_n$ for all $ p\in\nat$ with $p\leq k_n \}$, and
\newline
$D_{n+1}=\{z\in (D_n)_{w_n} : w_n+z\in B_n$ and $T_{p}(w_n)+ z\in D_n$ for all $ p\in\nat$ with $p\leq k_n \}$.

We claim that the sequence $(w_n)_{n\in\nat}$ has the required properties. We will prove, by induction on $\lambda$, that, for every $\lambda\in\nat$, 

$T_{p_1}(w_{n_1})\star \ldots \star T_{p_\lambda}(w_{n_\lambda})\in B_{n_1}\subseteq B_{1}=A_{i_0}$, 
\newline
for every $n_1<\cdots<n_\lambda\in\nat$, $p_1,\ldots,p_\lambda\in\nat\cup\{0\}$ such that $0\leq p_i \leq k_{n_i}$ for every $1\leq i \leq \lambda$ and $0 \in \{ p_1,\ldots,p_\lambda\}$, and also

$T_{p_1}(w_{n_1})\star \ldots \star T_{p_\lambda}(w_{n_\lambda})\in D_{n_1}\subseteq D_{1}=C_{j_0}$, 
\newline
for every $n_1<\cdots<n_\lambda\in\nat$, $p_1,\ldots,p_\lambda \in\nat$ such that $1\leq p_i \leq k_{n_i}$ for every $1\leq i \leq \lambda$. 

Indeed, for $n_1\in\nat$ and $p_1\in\nat$ such that $1\leq p_1 \leq k_{n_1}$, we have $w_{n_1}\in B_{n_1}$ and $T_{p_1}(w_{n_1})\in D_{n_1}$.  Assume that the accertion holds for $\lambda\geq 1$ and let $n_1<\cdots<n_\lambda<n_{\lambda+1}\in\nat$ and $ p_1,\ldots,p_\lambda, p_{\lambda+1}\in\nat\cup\{0\}$ such that $0\leq p_i \leq k_{n_i}$ for every $1\leq i \leq \lambda+1$. 

\noindent {\bf Case 1} Let $0\in \{p_2,\ldots,p_\lambda, p_{\lambda+1}\}$. Then $u=T_{p_2}(w_{n_2})\star \ldots \star T_{p_{\lambda+1}}(w_{n_{\lambda+1}})\in B_{n_2}\subseteq B_{n_1 +1}$, according to the induction hypothesis. Hence, 
\newline
$ T_{p_1}(w_{n_1})+ u=T_{p_1}(w_{n_1})\star \ldots \star T_{p_{\lambda+1}}(w_{n_{\lambda+1}})\in B_{n_1}$.

\noindent {\bf Case 2} Let $0\notin \{p_2,\ldots,p_\lambda, p_{\lambda+1}\}$. Then  $z=T_{p_2}(w_{n_2})\star \ldots \star T_{p_{\lambda+1}}(w_{n_{\lambda+1}})\in D_{n_2}\subseteq D_{n_1 +1}$, according to the induction hypothesis. If $p_1=0$, then 
\newline
$ w_{n_1}+ z=T_{p_1}(w_{n_1})\star \ldots \star T_{p_{\lambda+1}}(w_{n_{\lambda+1}})\in B_{n_1}$. 
\newline
If $p_1\in\nat$, then 
\newline
$ T_{p_1}(w_{n_1})+ z=T_{p_1}(w_{n_1})\star \ldots \star T_{p_{\lambda+1}}(w_{n_{\lambda+1}})\in D_{n_1}$. 
\newline
This finishes the proof.
\end{proof}

\begin{remark}\label{rem. C}
The special case of Theorem~\ref{thm:block-Ramsey1} for sequences  $\vec{k}=(k_n)_{n\in\nat}\subseteq \nat$ with  $k_n=k_1$ for every $n\in\nat$ coincide with the partition theorem of Bergelson, Blass and Hindman (Theorem 4.1 in  \cite{BBH}) for located words over a finite alphabet, while the case $\vec{k}=(k_n)_{n\in\nat}$ with  $k_n=1$ for every $n\in\nat$ gives Hindman's partition theorem (\cite{H}, \cite{Ba}).
\end{remark}
\begin{cor}
\label{central}
Let $\Sigma=\{\alpha_1, \alpha_2, \ldots \}$ be an infinite countable alphabet, $\upsilon \notin \Sigma$ a variable and $\vec{k}=(k_n)_{n\in\nat}\subseteq \nat$. Then for every 
$A\subseteq L(\Sigma, \vec{k})$ with $\mu(A)=1$ for $\mu\in\theta L(\Sigma, \vec{k})$ minimal in $\theta L(\Sigma, \vec{k})$ there exists a sequence $(w_n)_{n\in\nat}\subseteq L(\Sigma, \vec{k} ; \upsilon)$ with $w_n<w_{n+1}$ for every $n\in\nat$ such that
\begin{center}
$T_{p_1}(w_{n_1})\star \ldots \star T_{p_\lambda}(w_{n_\lambda})\in A$, 
\end{center}
for every $\lambda\in\nat$, $n_1<\cdots<n_\lambda\in\nat$ and $p_1,\ldots,p_\lambda \in\nat$ with $1\leq p_i \leq k_{n_i}$ for $1\leq i \leq \lambda$.
\end{cor}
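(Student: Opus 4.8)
The plan is to replay the part of the proof of Theorem~\ref{thm:block-Ramsey1} that yields its constant-word alternative, but with the prescribed set $A$ in place of a cell of a finite colouring. Write $X=L(\Sigma\cup\{\upsilon\},\vec k)$, $Y=L(\Sigma,\vec k)$, $Z=L(\Sigma,\vec k;\upsilon)$, and recall from that proof that $\theta X,\theta Y,\theta Z$ are non-empty left compact subsemigroups of $\beta X$ consisting of non-principal ultrafilters, that $\theta Z$ is a two-sided ideal of $\theta X$, and that for each $p\in\nat$ the restriction $\beta T_p|_{\theta X}$ is a homomorphism mapping $\theta X$ into $\theta Y$ and fixing $\theta Y$ pointwise. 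The given minimal idempotent $\mu$ of $\theta Y$ plays the role of the element denoted $\mu_1$ in that proof, and the argument there (Ellis's theorem together with the lifting of minimal idempotents inside the left compact semigroup $\theta X$, applied to the ideal $\theta Z$) supplies an idempotent $\nu\in\theta Z$, minimal for $\theta X$, with $\nu+\nu=\nu$, $\beta T_p(\nu)=\mu$ for every $p\in\nat$, and $\nu+\mu=\mu+\nu=\nu$; since $\mu+\mu=\mu$, these also give $\beta T_p(\nu)+\mu=\mu$ for every $p$. (In the notation of that proof $\nu$ is the element written $\mu$.)

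Then I would construct recursively, starting from $D_1=A$ (with the convention that $Z_{w_0}=Z$), a sequence $w_1<w_2<\cdots$ in $Z$ and a decreasing sequence $A=D_1\supseteq D_2\supseteq\cdots$ of subsets of $Y$ such that, for every $n\in\nat$: $\mu(D_n)=1$; $T_p(w_n)\in D_n$ for all $1\le p\le k_n$; and $D_{n+1}=\{z\in(D_n)_{w_n}:T_p(w_n)+z\in D_n\ \text{for all}\ 1\le p\le k_n\}$. At stage $n$ the relevant facts are $\nu\bigl(T_p^{-1}(D_n)\bigr)=\beta T_p(\nu)(D_n)=\mu(D_n)=1$; the set $G_p:=\{w\in X:\mu(\{z:T_p(w)+z\in D_n\})=1\}$ satisfies $\nu(G_p)=\bigl(\beta T_p(\nu)+\mu\bigr)(D_n)=\mu(D_n)=1$; and $\nu(Z_{w_{n-1}})=1$ since $\nu\in\theta Z$. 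As $\nu$ is a finitely additive $\{0,1\}$-valued measure, the finite intersection $Z_{w_{n-1}}\cap\bigcap_{p=1}^{k_n}\bigl(T_p^{-1}(D_n)\cap G_p\bigr)$ is $\nu$-large, hence non-empty, and I pick $w_n$ in it. Then $w_n\in Z$, $w_n>w_{n-1}$, $T_p(w_n)\in D_n$ for $1\le p\le k_n$, and $\mu(D_{n+1})=1$, because $w_n\in\bigcap_{p}G_p$ and $\mu\bigl((D_n)_{w_n}\bigr)=\mu(D_n)=1$ (here $(D_n)_{w_n}=D_n\cap Y_{w_n}$ and $\mu(Y_{w_n})=1$ as $\mu\in\theta Y$). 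Finally, an induction on $\lambda$ shows that $T_{p_1}(w_{n_1})\star\cdots\star T_{p_\lambda}(w_{n_\lambda})\in D_{n_1}\subseteq D_1=A$ whenever $n_1<\cdots<n_\lambda$ and $1\le p_i\le k_{n_i}$: the base case is $T_{p_1}(w_{n_1})\in D_{n_1}$, and in the inductive step one sets $z:=T_{p_2}(w_{n_2})\star\cdots\star T_{p_\lambda}(w_{n_\lambda})\in D_{n_2}\subseteq D_{n_1+1}$, so that $T_{p_1}(w_{n_1})+z\in D_{n_1}$ by the definition of $D_{n_1+1}$, while $T_{p_1}(w_{n_1})+z=T_{p_1}(w_{n_1})\star z$ because $w_{n_1}<w_{n_2}$.

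I do not expect a genuine obstacle here. The only substantive input is the existence of the pair $(\mu,\nu)$ with $\beta T_p(\nu)=\mu$ for all $p$ and the stated absorption identities, and that is precisely what is established in the course of the proof of Theorem~\ref{thm:block-Ramsey1}; the rest is bookkeeping with domains and zero-one measures. Relative to that proof this one is a pure specialization: the part of the recursion involving the auxiliary sets $B_n$ is dropped (that part served only the variable-word alternative --- the one requiring $0\in\{p_1,\ldots,p_\lambda\}$ --- which does not appear in the present conclusion), and one starts the construction from $D_1=A$ rather than from a cell of a finite partition.
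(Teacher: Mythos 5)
Your proposal is correct and is essentially the argument the paper intends: the corollary is stated without a separate proof precisely because it is the constant-word half of the construction in the proof of Theorem~\ref{thm:block-Ramsey1}, run with the given $\mu$-large set $A$ in the role of the cell $C_{j_0}$ and the auxiliary sets $B_n$ discarded. Your measure computations (in particular $\nu(G_p)=(\beta T_p(\nu)+\mu)(D_n)=\mu(D_n)=1$) and the closing induction match the paper's recursion step for step.
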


Now we will prove a stroger version of Theorem~\ref{thm:block-Ramsey1}, using the notion of extracted $\omega$-located words of a given orderly sequence of variable $\omega$-located words defined below. 

\subsection*{Extracted $\omega$-located words, Extractions}

Let $\Sigma=\{\alpha_1, \alpha_2, \ldots \}$ be an infinite countable alphabet ordered according to the natural numbers, $\upsilon \notin \Sigma$ a variable and $\vec{k}=(k_n)_{n\in\nat}\subseteq \nat$ an increasing sequence. We set

$L^\infty (\Sigma, \vec{k} ; \upsilon) = \{\vec{w} = (w_n)_{n\in\nat} : w_n\in L(\Sigma, \vec{k} ; \upsilon)$
and  $w_n<w_{n+1} \ \forall\ n\in\nat\}$. 
\newline
Let a sequence $\vec{w} = (w_n)_{n\in\nat}\in L^\infty (\Sigma, \vec{k} ; \upsilon)$.
  
An \textit{extracted variable $\omega$-located word} of $\vec{w}$ is a variable $\omega$-located word 
\begin{center}
$u=T_{p_1}(w_{n_1})\star \ldots \star T_{p_\lambda}(w_{n_\lambda})\in L(\Sigma, \vec{k} ; \upsilon)$, 
\end{center}
where $\lambda\in\nat$, $n_1<\cdots<n_\lambda\in\nat$, $p_1,\ldots,p_\lambda\in\nat\cup\{0\}$ with $0\leq p_i \leq k_{n_i}$ for every $1\leq i \leq \lambda$ and $0 \in \{ p_1,\ldots,p_\lambda\}$. 
\newline
The set of all the extracted variable $\omega$-located words of $\vec{w}$ is denoted by $EV(\vec{w})$. 

An \textit{extracted $\omega$-located word} of $\vec{w}$ is an $\omega$-located word 
\begin{center}
$z=T_{p_1}(w_{n_1})\star \ldots \star T_{p_\lambda}(w_{n_\lambda})\in L(\Sigma, \vec{k})$, 
\end{center}
where $\lambda\in\nat$, $n_1<\cdots<n_\lambda\in\nat$, $p_1,\ldots,p_\lambda \in\nat$ with $1\leq p_i \leq k_{n_i}$ for every $1\leq i \leq \lambda$. 
\newline
The set of all the extracted $\omega$-located words of $\vec{w}$ is denoted by $E(\vec{w})$. Let

$EV^{\infty}(\vec{w}) = \{\vec{u}=(u_n)_{n\in\nat} \in L^\infty (\Sigma, \vec{k} ; \upsilon) : u_n\in EV(\vec{w})$ for every $n\in\nat \}$.
\newline
If $\vec{u} \in EV^{\infty}(\vec{w})$, then we say that $\vec{u}$ is an \textit{extraction} of $\vec{w}$ and we write $\vec{u} \prec \vec{w}$. Notice that $\vec{u} \prec \vec{w}$ if and only if $EV(\vec{u})\subseteq EV(\vec{w})$ and that $\vec{w} \prec \vec{e}$ for every $\vec{w}\in L^{\infty} (\Sigma, \vec{k} ; \upsilon)$ where $\vec{e}=(e_n)_{n\in\nat}\in L^{\infty} (\Sigma, \vec{k} ; \upsilon)$ with $e_n : \{n\}\rightarrow \{\upsilon\}$ for every $n\in \nat$.

\begin{thm}
\label{thm:block-Ramsey2}
Let $\Sigma=\{\alpha_1, \alpha_2, \ldots \}$ be an infinite countable alphabet, $\upsilon \notin \Sigma$ a variable, $\vec{k}=(k_n)_{n\in\nat}\subseteq \nat$ an increasing sequence, $\vec{w} = (w_n)_{n\in\nat}\in L^\infty (\Sigma, \vec{k} ; \upsilon)$ and $r,s\in\nat$. If $L(\Sigma, \vec{k} ; \upsilon)=A_1\cup\cdots\cup A_r$ and 
$L(\Sigma, \vec{k})=C_1\cup\cdots\cup C_s$, then there exist an extraction $\vec{u}=(u_n)_{n\in\nat}$ of $\vec{w}$ and $1\leq i_0 \leq r$, $1\leq j_0 \leq s$ such that 
\begin{center}
$EV(\vec{u})\subseteq A_{i_0}$ and $E(\vec{u})\subseteq C_{j_0}$.
\end{center}
\end{thm}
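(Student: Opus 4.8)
The plan is to reprise the proof of Theorem~\ref{thm:block-Ramsey1} almost verbatim, only relativised to the sequence $\vec{w}$: with $X=L(\Sigma\cup\{\upsilon\},\vec{k})$ and its operation $+$ exactly as there, the roles of $Z=L(\Sigma,\vec{k};\upsilon)$ and $Y=L(\Sigma,\vec{k})$ are to be played by $V:=EV(\vec{w})$ and $W:=E(\vec{w})$. First I would check that $V$, $W$ and $V\cup W$ each satisfy hypotheses (i), (ii) of Claim~1. Closure under $\star$ is immediate once one observes that, $\vec{w}$ being orderly, $a<b$ for extracted words $a=T_{p_1}(w_{n_1})\star\cdots\star T_{p_\lambda}(w_{n_\lambda})$, $b=T_{q_1}(w_{m_1})\star\cdots\star T_{q_\nu}(w_{m_\nu})$ forces $n_\lambda<m_1$, so $a\star b$ is again an extracted word, built from the increasing list $n_1<\cdots<n_\lambda<m_1<\cdots<m_\nu$ and variable iff $a$ or $b$ is; hypothesis (ii) holds since $w_m=T_0(w_m)\in V$ and $T_1(w_m)\in W$ have $\min dom$ tending to $\infty$. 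Hence, by Claim~1, $\theta V$, $\theta W$, $\theta(V\cup W)$ are non-empty left compact subsemigroups of $\beta X$ consisting of non-principal ultrafilters, with $\theta W\subseteq\theta(V\cup W)$ and $\theta V\subseteq\theta(V\cup W)$; and the computation used in the proof of Theorem~\ref{thm:block-Ramsey1} to show that $\theta Z$ is a two-sided ideal of $\theta X$ shows here that $\theta V$ is a two-sided ideal of $\theta(V\cup W)$.

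The genuinely new point, which I expect to be the main obstacle, concerns the maps $\beta T_p$. In Theorem~\ref{thm:block-Ramsey1} one had $T_p(X)=Y$ for every $p$, hence $\beta T_p(\theta X)\subseteq\theta Y$ outright; this now fails, because for $u=T_{p_1}(w_{n_1})\star\cdots\star T_{p_\lambda}(w_{n_\lambda})\in V$ one gets $T_p(u)=T_{q_1}(w_{n_1})\star\cdots\star T_{q_\lambda}(w_{n_\lambda})$ with $q_i=p_i$ when $p_i\geq 1$ and $q_i=p$ (after the clamping built into $T_p$) when $p_i=0$, and this lies in $W=E(\vec{w})$ only if $q_i\leq k_{n_i}$ for all $i$, i.e. essentially only if $p\leq k_{n_1}$ --- which a fixed $p$ need not satisfy once $\vec{w}$ uses small indices. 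The remedy is that any $\mu\in\theta V$ concentrates, for each $N$, on those $u\in V$ whose least $\vec{w}$-index exceeds $N$: indeed $\mu\bigl(V_{w_N}\bigr)=1$, and $u>w_N$ forces (orderliness of $\vec{w}$) the least index of $u$ to be $>N$. Since $\vec{k}$ is increasing, for each fixed $p\in\nat$ this yields $\mu\bigl(\{u\in V:T_p(u)\in W\}\bigr)=1$, hence $\beta T_p(\theta V)\subseteq\theta W$; moreover $\beta T_p$ fixes $\theta W$ pointwise (as $W\subseteq L(\Sigma,\vec{k})$) and, by $T_p(a\star b)=T_p(a)\star T_p(b)$, restricts to a homomorphism on $\theta(V\cup W)$. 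Now exactly as in Theorem~\ref{thm:block-Ramsey1}: choose a minimal idempotent $\mu_1$ of $\theta W$; since $\theta V$ is a two-sided ideal of $\theta(V\cup W)$ there is a minimal idempotent $\mu$ of $\theta(V\cup W)$ with $\mu\in\theta V$ and $\mu\leq\mu_1$; applying $\beta T_p$ and using minimality of $\mu_1$ in $\theta W$ gives $\beta T_p(\mu)=\mu_1$ for every $p\in\nat$, so that $\mu+\mu=\mu$, $\mu_1+\mu_1=\mu_1$, $\mu_1=\beta T_p(\mu)$ and $\mu+\mu_1=\mu_1+\mu=\mu$.

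Finally I would pick $i_0,j_0$ with $\mu(A_{i_0})=1$ and $\mu_1(C_{j_0})=1$ (possible since $\mu(V)=1=\mu_1(W)$ and the given partitions cover $V\subseteq L(\Sigma,\vec{k};\upsilon)$ and $W\subseteq L(\Sigma,\vec{k})$), and build the extraction $\vec{u}=(u_n)_{n\in\nat}\prec\vec{w}$ by the same induction as in Theorem~\ref{thm:block-Ramsey1}: set $B_1=A_{i_0}\cap V$, $D_1=C_{j_0}\cap W$, and with $u_1<\cdots<u_{n-1}$ and decreasing $B_n\supseteq B_{n+1}$ in $V$, $D_n\supseteq D_{n+1}$ in $W$ already chosen so that $\mu(B_n)=1=\mu_1(D_n)$, pick $u_n\in V$ from the intersection of $B_n$, of $\{u:T_p(u)\in D_n\}$ over $p\leq k_n$, of $(V)_{u_{n-1}}$, and of the finitely many $\mu$-large sets that guarantee $\mu(B_{n+1})=1=\mu_1(D_{n+1})$ for the recursively defined $B_{n+1}=\{u\in(B_n)_{u_n}:u_n+u\in B_n\text{ and }T_p(u_n)+u\in B_n\text{ for all }p\leq k_n\}$ and $D_{n+1}=\{z\in(D_n)_{u_n}:u_n+z\in B_n\text{ and }T_p(u_n)+z\in D_n\text{ for all }p\leq k_n\}$. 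Here one uses that the $u_n$ being orderly forces the least $\vec{w}$-index of $u_n$ to be $\geq n$, so that $T_p(u_n)\in W$ for every $p\leq k_n$, keeping the recursion well posed. The concluding induction on $\lambda$ is then verbatim that of Theorem~\ref{thm:block-Ramsey1} (its Cases~1 and~2) and shows $T_{p_1}(u_{n_1})\star\cdots\star T_{p_\lambda}(u_{n_\lambda})\in B_{n_1}\subseteq A_{i_0}$ whenever $0\in\{p_1,\dots,p_\lambda\}$ and $\in D_{n_1}\subseteq C_{j_0}$ when all $p_i\geq 1$; equivalently $EV(\vec{u})\subseteq A_{i_0}$ and $E(\vec{u})\subseteq C_{j_0}$, which is the claim.
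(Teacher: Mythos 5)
Your proof is correct, but it takes a genuinely different route from the paper. The paper's own proof of Theorem~\ref{thm:block-Ramsey2} is a short transfer argument: it defines the bijection $\phi:L(\Sigma\cup\{\upsilon\},\vec{k})\to EV(\vec{w})\cup E(\vec{w})$ sending $t_{n_1}\ldots t_{n_\lambda}$ to $T_{p_1}(w_{n_1})\star\ldots\star T_{p_\lambda}(w_{n_\lambda})$ (with $p_i$ read off from the letter $t_{n_i}$), pulls the partitions back through $\phi$, applies Theorem~\ref{thm:block-Ramsey1} to the pulled-back partitions, and pushes the resulting sequence forward; the only points to check are that $\phi$ is a bijection intertwining $\star$ and the substitutions $T_p$. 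You instead re-run the entire ultrafilter machinery relativised to $V=EV(\vec{w})$ and $W=E(\vec{w})$, and in doing so you correctly isolate and resolve the one genuinely new obstruction of that approach, namely that $T_p$ need not map $V$ into $W$ when $p$ exceeds $k_{n_1}$ for the least $\vec{w}$-index $n_1$ of the word in question; your observation that every $\mu\in\theta V$ concentrates on words of arbitrarily large least index (via $\mu(V_{w_N})=1$ and orderliness of $\vec{w}$), together with monotonicity of $\vec{k}$, does give $\beta T_p(\theta V)\subseteq\theta W$, and the rest of your argument (ideal property of $\theta V$ in $\theta(V\cup W)$, minimal idempotents $\mu\leq\mu_1$ with $\beta T_p(\mu)=\mu_1$, the recursive choice of $u_n$ and the final induction on $\lambda$) is a faithful and valid relativisation of the proof of Theorem~\ref{thm:block-Ramsey1}. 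What the paper's route buys is brevity and the fact that no semigroup argument has to be repeated; what your route buys is that it exhibits $\theta(EV(\vec{w}))$ and $\theta(E(\vec{w}))$ directly as left compact semigroups with the same idempotent structure, which is conceptually closer to a genuine relativisation and makes explicit exactly where the hypothesis that $\vec{k}$ is increasing enters. Both are sound; yours is simply the longer road to the same destination.
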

\begin{proof}
Let the function $\phi:L(\Sigma\cup\{\upsilon\}, \vec{k})\rightarrow EV(\vec{w})\cup E(\vec{w})$ which sends $t=t_{n_1}\ldots t_{n_\lambda}\in L(\Sigma\cup\{\upsilon\}, \vec{k})$ to 
$\phi(t_{n_1}\ldots t_{n_\lambda})=T_{p_1}(w_{n_1})\star \ldots \star T_{p_\lambda}(w_{n_\lambda})$, where for $1\leq i \leq \lambda$, $p_i=0$ if $t_{n_i}=\upsilon$ and $p_i=\mu$ if $t_{n_i}=\alpha_{\mu}$ for $1\leq \mu \leq k_{n_i}$. The function $\phi$ is one to one and onto $EV(\vec{w})\cup E(\vec{w})$. Moreover $\phi(L(\Sigma, \vec{k} ; \upsilon))=EV(\vec{w})$ and $\phi(L(\Sigma, \vec{k}))=E(\vec{w})$.

According to Theorem~\ref{thm:block-Ramsey1}, there exist a sequence $\vec{s}=(s_n)_{n\in\nat}\in L^\infty (\Sigma, \vec{k}; \upsilon)$ and $1\leq i_0 \leq r$, $1\leq j_0 \leq s$ such that $EV(\vec{s})\subseteq (\phi)^{-1}(A_{i_0})$ and $E(\vec{s})\subseteq (\phi)^{-1}(C_{j_0})$. Set $u_n=\phi(s_n)\in EV(\vec{w})$ for every $n\in\nat$ and $\vec{u}=(u_n)_{n\in\nat}$. Then $\vec{u}=(u_n)_{n\in\nat}$ is an extraction of $\vec{w}$ such that $EV(\vec{u})\subseteq \phi(EV(\vec{s}))\subseteq A_{i_0}$ and $E(\vec{u})\subseteq \phi(E(\vec{s}))\subseteq C_{j_0}$. 
\end{proof}
As a consequence of Theorem~\ref{thm:block-Ramsey1}, we will prove a partition theorem for (unlocated) $\omega$-words, extending Carlson's partition theorem (Lemma 5.9 in \cite{C}) for words over a finite alphabet. 
\newline
For an infinite countable alphabet $\Sigma=\{\alpha_1, \alpha_2, \ldots \}$, $\upsilon \notin \Sigma$ and $\vec{k}=(k_n)_{n\in\nat}\subseteq \nat$, let $W(\Sigma, \vec{k})$ be the set of $\omega$-located words over $\Sigma$ dominated by $\vec{k}$ with domain an initial segment of $\nat$ and $W(\Sigma, \vec{k}; \upsilon)$ the set of variable $\omega$-located words over $\Sigma$ dominated by $\vec{k}$ with domain an initial segment of $\nat$. 
\newline
We set $W(\Sigma\cup\{\upsilon\}, \vec{k})=W(\Sigma, \vec{k})\cup W(\Sigma, \vec{k} ; \upsilon)$ and 
we endow the set $W(\Sigma\cup\{\upsilon\}, \vec{k})$ with a relation defining for $w,u\in W(\Sigma\cup\{\upsilon\}, \vec{k})$
\begin{center}
$w=w_{1}\ldots w_{l} < u=u_{1}\ldots u_{r} \Leftrightarrow l<r$ and $u_{1}=\ldots=u_{l}=\alpha_1$.
\end{center}
For $w=w_{1}\ldots w_{l}, u=u_{1}\ldots u_{r}\in W(\Sigma \cup\{\upsilon\}, \vec{k})$ with $w<u$ we define the concatenating $\omega$-word $w \star u = w_{1}\ldots w_{l}u_{l+1}\ldots u_{r}\in W(\Sigma \cup\{\upsilon\}, \vec{k})$. 

Let $W^\infty (\Sigma, \vec{k}; \upsilon) = \{\vec{w} = (w_n)_{n\in\nat} : w_n\in W(\Sigma, \vec{k} ; \upsilon)$
and  $w_n<w_{n+1} \ \forall\ n\in\nat\}$.

\begin{thm}
[\textsf{Partition theorem for $\omega$-words}]
\label{thm:block-Ramsey3}
Let $\Sigma=\{\alpha_1, \alpha_2, \ldots \}$ be an infinite countable alphabet, $\upsilon \notin \Sigma$ a variable, $\vec{k}=(k_n)_{n\in\nat}\subseteq \nat$ an increasing sequence and $r,s\in\nat$. If $W(\Sigma, \vec{k} ; \upsilon)=A_1\cup\cdots\cup A_r$ and $W(\Sigma, \vec{k})=C_1\cup\cdots\cup C_s$, there exist a sequence $(u_n)_{n\in\nat}\in W^\infty (\Sigma, \vec{k}; \upsilon)$ and $1\leq i_0 \leq r$, $1\leq j_0 \leq s$ such that
\begin{center}
$T_{p_1}(u_{n_1})\star \ldots \star T_{p_\lambda}(u_{n_\lambda})\in A_{i_0}$ 
\end{center}
for every $\lambda\in\nat$, $n_1<\cdots<n_\lambda\in\nat$, $p_1,\ldots,p_\lambda\in\nat\cup\{0\}$ such that $0\leq p_i \leq k_{n_i}$ for every $1\leq i \leq \lambda$ and $0 \in \{ p_1,\ldots,p_\lambda\}$; and 
\begin{center}
$T_{p_1}(u_{n_1})\star \ldots \star T_{p_\lambda}(u_{n_\lambda})\in C_{j_0}$, 
\end{center}
for every $\lambda\in\nat$, $n_1<\cdots<n_\lambda\in\nat$ and $p_1,\ldots,p_\lambda \in\nat$ such that $1\leq p_i \leq k_{n_i}$ for every $1\leq i \leq \lambda$.
\end{thm}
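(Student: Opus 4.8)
The plan is to deduce Theorem~\ref{thm:block-Ramsey3} from the already-established Theorem~\ref{thm:block-Ramsey1} by transporting the located-word statement to the initial-segment-word setting via a suitable map. The obstacle is purely combinatorial: a word with domain an initial segment of $\nat$ is not an arbitrary located word, and the ``concatenation'' $\star$ for $\omega$-words (which forces the prefix coordinates of the second word to be $\alpha_1$) does not coincide with the located-word $\star$. So one cannot simply restrict; one needs an order-preserving, $T_p$-commuting embedding of a copy of $L(\Sigma,\vec k;\upsilon)$ into $W(\Sigma,\vec k;\upsilon)$ whose image captures enough of the structure.

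\smallskip

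\noindent\textbf{Step 1: the encoding map.} First I would fix the canonical sequence $\vec e=(e_n)_{n\in\nat}\in L^\infty(\Sigma,\vec k;\upsilon)$ with $e_n\colon\{n\}\to\{\upsilon\}$, and more generally work with a suitably ``spread out'' reference sequence in $W^\infty(\Sigma,\vec k;\upsilon)$. Concretely, I would define a map $\Psi\colon L(\Sigma\cup\{\upsilon\},\vec k)\to W(\Sigma\cup\{\upsilon\},\vec k)$ that sends a located word $w=w_{n_1}\ldots w_{n_l}$ (with $n_l$ its largest domain point) to the initial-segment word of length $n_l$ whose $j$-th letter is $w_j$ if $j\in\operatorname{dom}(w)$ and $\alpha_1$ otherwise. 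The point is that $\Psi$ is injective on located words whose domain contains $1$ (or after a harmless normalisation), is strictly order-preserving for the two relations $<$, satisfies $\Psi(w\star u)=\Psi(w)\star u'$ where the $\star$ on the right is the $\omega$-word concatenation — here one must check that the ``padding by $\alpha_1$'' on the initial block of the second factor is exactly what the $\omega$-word $\star$ prescribes — and, crucially, commutes with each $T_p$: since $T_p$ acts letterwise and fixes $\alpha_1$, we have $T_p(\Psi(w))=\Psi(T_p(w))$.

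\smallskip

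\noindent\textbf{Step 2: pull back the colourings and apply Theorem~\ref{thm:block-Ramsey1}.} Given $W(\Sigma,\vec k;\upsilon)=A_1\cup\cdots\cup A_r$ and $W(\Sigma,\vec k)=C_1\cup\cdots\cup C_s$, define $A_i'=\Psi^{-1}(A_i)$ and $C_j'=\Psi^{-1}(C_j)$, which partition (the relevant copies of) $L(\Sigma,\vec k;\upsilon)$ and $L(\Sigma,\vec k)$. Apply Theorem~\ref{thm:block-Ramsey1} to obtain $(w_n)_{n\in\nat}\subseteq L(\Sigma,\vec k;\upsilon)$, with $w_n<w_{n+1}$, and colours $i_0,j_0$ so that every $T_{p_1}(w_{n_1})\star\cdots\star T_{p_\lambda}(w_{n_\lambda})$ lands in $A_{i_0}'$ (when $0$ occurs among the $p_i$ and each $p_i\le k_{n_i}$) or in $C_{j_0}'$ (when all $p_i\in\nat$, $p_i\le k_{n_i}$). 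Set $u_n=\Psi(w_n)\in W(\Sigma,\vec k;\upsilon)$. Because $\Psi$ is order-preserving we get $u_n<u_{n+1}$; because $\Psi$ commutes with the $T_p$ and with $\star$ we get
\[
T_{p_1}(u_{n_1})\star\cdots\star T_{p_\lambda}(u_{n_\lambda})=\Psi\big(T_{p_1}(w_{n_1})\star\cdots\star T_{p_\lambda}(w_{n_\lambda})\big)\in\Psi(A_{i_0}')\subseteq A_{i_0},
\]
and similarly for $C_{j_0}$; note one also needs that each $u_n$ extends its predecessors correctly so that the $\omega$-word $\star$ is defined, which is ensured by choosing the $w_n$ with strictly increasing domains (automatic from $w_n<w_{n+1}$) and then, if necessary, passing to an extraction using Theorem~\ref{thm:block-Ramsey2} to guarantee that consecutive $u_n$'s align as required by the relation ``$w<u$'' for $\omega$-words.

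\smallskip

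\noindent\textbf{Main obstacle.} The genuinely delicate point is matching the two notions of concatenation: for $\omega$-words, $w\star u$ requires $u$ to begin with a block of $\alpha_1$'s of length exactly $|w|$, so the family of $\omega$-words that arise as $T_{p_1}(u_{n_1})\star\cdots\star T_{p_\lambda}(u_{n_\lambda})$ is constrained, and I must choose the reference sequence $(u_n)$ — equivalently, normalise the image of $\Psi$ — so that these prefix-compatibility conditions hold for all admissible choices of the $n_i$ and $p_i$ simultaneously. I expect this to be handled by first extracting from $\vec e$ (via Theorem~\ref{thm:block-Ramsey2}) a sequence whose members have nested domains $\{1,2,\ldots,m_1\}\subsetneq\{1,2,\ldots,m_2\}\subsetneq\cdots$ read off correctly, so that $\Psi$ indeed intertwines the two $\star$ operations on the nose; once that bookkeeping is set up, the rest is the formal transport above.
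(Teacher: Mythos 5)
Your proposal is correct and is essentially the paper's own proof: the paper defines exactly your map $\Psi$ (called $f$ there), checks that it preserves $<$, intertwines the two $\star$ operations and commutes with each $T_p$, pulls the partitions back through $f^{-1}$, applies Theorem~\ref{thm:block-Ramsey1}, and sets $u_n=f(w_n)$. The ``main obstacle'' you flag resolves itself with no extra extraction: since $\min\operatorname{dom}(w_2)>\max\operatorname{dom}(w_1)$, the word $f(w_2)$ automatically begins with a block of $\alpha_1$'s of length at least $|f(w_1)|=\max\operatorname{dom}(w_1)$, so $f(w_1)<f(w_2)$ in the $\omega$-word sense and $f(w_1\star w_2)=f(w_1)\star f(w_2)$ on the nose.
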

\begin{proof}
Let the function 
$f : L(\Sigma\cup\{\upsilon\}, \vec{k}) \longrightarrow W(\Sigma\cup\{\upsilon\}, \vec{k})$ which sends 
\newline
$w=w_{n_1}\ldots w_{n_l}\in L(\Sigma\cup\{\upsilon\}, \vec{k})$ to $f(w)=u=u_1\ldots u_{n_l}\in W(\Sigma\cup\{\upsilon\}, \vec{k})$ where $u_{n_j}=w_{n_j}$ for every $1\leq j\leq l$ and $u_i=\alpha_1$ for every $i\in\{1,\ldots, n_l\}\setminus \{n_j : 1\leq j\leq l \}$. Then $f(L(\Sigma, \vec{k} ; \upsilon))\subseteq W(\Sigma, \vec{k} ; \upsilon)$ and $f(L(\Sigma, \vec{k}))\subseteq W(\Sigma, \vec{k})$. Also, for $w_1,w_2\in L(\Sigma\cup\{\upsilon\}, \vec{k})$ with $w_1<w_2$ we have $f(w_1)<f(w_2)$, $f(w_1 \star w_2)=f(w_1) \star f(w_2)$ and $f(T_p(w_1))=T_p(f(w_1))$ for every $p\in\nat$.

According to Theorem~\ref{thm:block-Ramsey1}, 
there exist a sequence $(w_n)_{n\in\nat}\in L^\infty (\Sigma, \vec{k} ; \upsilon)$ and 
$1\leq i_0 \leq r$, $1\leq j_0 \leq s$ such that: $T_{p_1}(w_{n_1})\star \ldots \star T_{p_\lambda}(w_{n_\lambda})\in f^{-1}(A_{i_0})$, 
for every $\lambda\in\nat$, $n_1<\cdots<n_\lambda\in\nat$, $p_1,\ldots,p_\lambda\in\nat\cup\{0\}$ such that $0\leq p_i \leq k_{n_i}$ for every $1\leq i \leq \lambda$ and $0 \in \{ p_1,\ldots,p_\lambda\}$; and 
$T_{p_1}(w_{n_1})\star \ldots \star T_{p_\lambda}(w_{n_\lambda})\in f^{-1}(C_{j_0})$, 
for every $\lambda\in\nat$, $n_1<\cdots<n_\lambda\in\nat$ and $p_1,\ldots,p_\lambda \in\nat$ such that $1\leq p_i \leq k_{n_i}$ for every $1\leq i \leq \lambda$.

Set $u_n=f(w_n)$ for every $n\in\nat$. The sequence $(u_n)_{n\in\nat}\in W^\infty (\Sigma, \vec{k}; \upsilon)$ satisfies the required properties.
\end{proof}

We will define now a function $g$ from the set $L(\nat, \vec{k})$ of all the $\omega$-located words over $\nat$ dominated by an increasing sequence $\vec{k}$ into an arbitrary semigroup. Via the function $g$ can be proved strong partition theorems for (commutative or noncommutative) semigroups as consequences of  Theorems~\ref{thm:block-Ramsey1}, ~\ref{thm:block-Ramsey2}, ~\ref{thm:block-Ramsey3}. 
\newline
Let $(X,+)$ be a semigroup, $\vec{k}=(k_n)_{n \in \nat}\subseteq \nat$ an increasing sequence, $(y_{l,n})_{n \in \nat}\subseteq X$ for every $l \in \nat,$ and the alphabet $\Sigma=\{1,2,\ldots\}=\nat$. We define the function 
\begin{center}
$g:L(\Sigma,\vec{k})\rightarrow X$ with $g(w_{n_1}\ldots w_{n_l})=\sum^{l}_{i=1}y_{w_{n_i},\;n_i}.$ 
\end{center}
Observe that $g(u_1\star u_2)=g(u_1)+g(u_2)$ for every $u_1<u_2\in L(\Sigma,\vec{k})$. 

Let $w=w_{n_1}\ldots w_{n_l}\in L(\Sigma, \vec{k} ; \upsilon)$ with $w_{n_1}, w_{n_l}\in \Sigma$. If $(X,+)$ is a commutative semigroup, then for every $p\in\nat$ with $1\leq p\leq k_{n_1}$
\begin{center}
$g(T_{p}(w))=\sum_{t\in E\setminus H}y_{w_t,t}+\sum_{t \in H}y_{p, t}$ 
\end{center}
where, $E=\{n_i:\; 1\leq i\leq l\}$ is the domain of $w$ and $H=\{n \in E: w_n=\upsilon \}$. If $(X,+)$ is a noncommutative semigroup, then there exist $m\in\nat$ such that $E=E_1\cup\ldots \cup E_{m+1}$ with 
$\max E_i<\min E_{i+1}$ for $1\leq i\leq m$ and $H=H_1\cup\ldots \cup H_m$ with $\emptyset\neq H_i \subseteq E_i$ for every $1\leq i\leq m$ such that for every $p\in\nat$ with $1\leq p\leq k_{n_1}$
\begin{center}
$g(T_{p}(w))=(\sum^{m}_{i=1}(\sum_{t\in E_i\setminus H_i}y_{w_t,t}+\sum_{t \in H_{i}}y_{p,\;t}))+\sum_{t\in E_{m+1}}y_{w_t,t}$.
\end{center}
Via the function $g$, Theorem~\ref{thm:block-Ramsey1} implies the following partition theorem for commutative semigroups, which is an improvement of Theorem 14.12 of Hindman and Strauss in \cite{HS}, and also an analogous theorem for noncommutative semigroups (Theorem 14.15 in \cite{HS}). 
For a sequence $(x_n)_{n\in\nat}$ in a semigroup $(X,+)$, we set

$FS\big((x_n)_{n\in\nat}\big)=\{x_{n_1}+\ldots+x_{n_\lambda} : \lambda\in\nat, n_1<\cdots<n_\lambda\in\nat\}$.
\begin{cor}
\label{thm:van der Waerden}
Let $(X,+)$ be a commutative semigroup, $\vec{k}=(k_n)_{n \in \nat}\subseteq \nat$ an increasing sequence and $(y_{l,n})_{n \in \nat}\subseteq X$ for every $l \in \nat,$. If $X=A_1\cup \ldots \cup A_r,\; r \in \nat$ is a finite partition of $X$, then there exist $1\leq i_0\leq r$, sequences $(E_n)_{n \in \nat}$, $(H_n)_{n \in \nat},$ of non-empty finite subsets of $\nat$ with $\max E_n<\min E_{n+1}$, $H_n\subseteq E_{n}$ for every $n \in \nat$ and a sequence $(\beta_n)_{n \in \nat}\subseteq X$ with $\beta_n=\sum_{j\in E_n \setminus H_n}y_{l^n_j,j}$ for $1\leq l^n_j \leq k_j$ such that for every function $f: \nat\rightarrow \nat$ with $f(n)\leq k_n$ for every $n\in\nat$
\begin{center}
$ FS\big((\beta_{n}+\sum_{t \in H_{n}}y_{f(n), t})_{n\in\nat}\big)\subseteq A_{i_0}$.
\end{center}
\end{cor}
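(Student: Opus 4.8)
The plan is to deduce this from Theorem~\ref{thm:block-Ramsey1} by transporting the given finite partition of $X$ to a finite partition of the space of $\omega$-located words via the homomorphism-like map $g$ introduced just above the statement. First I would fix the alphabet $\Sigma=\nat$ and form, for the sequences $(y_{l,n})_{n\in\nat}$, the function $g:L(\Sigma,\vec{k})\rightarrow X$ with $g(w_{n_1}\ldots w_{n_l})=\sum_{i=1}^{l}y_{w_{n_i},n_i}$, recalling that $g(u_1\star u_2)=g(u_1)+g(u_2)$ whenever $u_1<u_2$. The partition $X=A_1\cup\cdots\cup A_r$ pulls back to a partition $L(\Sigma,\vec{k})=C_1\cup\cdots\cup C_r$ with $C_i=g^{-1}(A_i)$; since Theorem~\ref{thm:block-Ramsey1} also wants a partition of $L(\Sigma,\vec{k};\upsilon)$, I would just use the trivial partition $A_1=L(\Sigma,\vec{k};\upsilon)$ (i.e. take the parameter $r$ in that theorem to be $1$, and let its $s$ be our $r$).

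Next I would apply Theorem~\ref{thm:block-Ramsey1} to obtain a sequence $(w_n)_{n\in\nat}\subseteq L(\Sigma,\vec{k};\upsilon)$ with $w_n<w_{n+1}$ and an index $1\leq i_0\leq r$ such that $T_{p_1}(w_{n_1})\star\cdots\star T_{p_\lambda}(w_{n_\lambda})\in C_{i_0}$ for all $n_1<\cdots<n_\lambda$ and all $p_1,\ldots,p_\lambda\in\nat$ with $1\leq p_i\leq k_{n_i}$. Without loss of generality each $w_n$ may be assumed to begin and end with a letter from $\Sigma$ (otherwise pad with $\alpha_1$ at the ends, which changes neither the relevant images under $g\circ T_p$ in a way that matters nor the applicability of the theorem — this is the kind of routine normalization I would state but not belabor). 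Then for each $n$ I would read off, from the commutative case of the displayed formula for $g(T_p(w))$ established above the statement, the data $E_n=\operatorname{dom}(w_n)$, $H_n=\{j\in E_n:(w_n)_j=\upsilon\}$, and $\beta_n=\sum_{j\in E_n\setminus H_n}y_{(w_n)_j,j}$; note $\max E_n<\min E_{n+1}$ because $w_n<w_{n+1}$, and $H_n\subseteq E_n$ with $H_n\neq\emptyset$ since $w_n$ is a variable word. The commutativity of $X$ gives $g(T_p(w_n))=\beta_n+\sum_{t\in H_n}y_{p,t}$ for every $1\leq p\leq k_n$ (here I use that $\vec{k}$ is increasing, so $k_{n_i}\geq p$ for all letters once $p\leq k_n$ and $n=\min$ index, making $T_p$ substitute $\alpha_p$ at every occurrence of $\upsilon$ in $w_n$).

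Finally, given any $f:\nat\rightarrow\nat$ with $f(n)\leq k_n$, I would take any $\lambda\in\nat$ and $n_1<\cdots<n_\lambda$ and set $p_i=f(n_i)$; since $1\leq p_i\leq k_{n_i}$, the conclusion of Theorem~\ref{thm:block-Ramsey1} places $T_{p_1}(w_{n_1})\star\cdots\star T_{p_\lambda}(w_{n_\lambda})$ in $C_{i_0}=g^{-1}(A_{i_0})$, and applying $g$ together with its additivity over $\star$ yields
\[
\sum_{i=1}^{\lambda}\Bigl(\beta_{n_i}+\sum_{t\in H_{n_i}}y_{f(n_i),t}\Bigr)\in A_{i_0}.
\]
As $\lambda$ and the $n_i$ range freely this says exactly $FS\bigl((\beta_n+\sum_{t\in H_n}y_{f(n),t})_{n\in\nat}\bigr)\subseteq A_{i_0}$, which is the desired conclusion. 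The main obstacle, such as it is, is purely bookkeeping: one must make sure the normalization of the $w_n$ to start/end in $\Sigma$ and the use of the increasing hypothesis on $\vec{k}$ are compatible, so that the clean commutative formula $g(T_p(w_n))=\beta_n+\sum_{t\in H_n}y_{p,t}$ holds uniformly for all $1\leq p\leq k_n$ rather than only for $p$ in some tail; everything else is a direct transcription of Theorem~\ref{thm:block-Ramsey1} through $g$.
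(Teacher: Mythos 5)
Your proposal follows the paper's own route exactly: pull the partition of $X$ back through $g$ to a partition of $L(\Sigma,\vec{k})$, apply Theorem~\ref{thm:block-Ramsey1} (with the trivial partition on the variable words), read off $E_n$, $H_n$, $\beta_n$ from the resulting sequence, and use additivity of $g$ over $\star$ together with commutativity and the increasing hypothesis on $\vec{k}$ to identify $g(T_{f(n)}(w_n))$ with $\beta_n+\sum_{t\in H_n}y_{f(n),t}$. The one step where your write-up deviates, and where it would fail if taken literally, is the normalization: ``padding $w_n$ with $\alpha_1$ at the ends'' means adjoining new positions to $dom(w_n)$, and the resulting words are no longer of the form $T_{p_1}(w_{n_1})\star\cdots\star T_{p_\lambda}(w_{n_\lambda})$, so the conclusion of Theorem~\ref{thm:block-Ramsey1} gives no control over their images under $g$ (and the padding also alters those images by extra terms $y_{1,j}$). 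The paper's fix stays inside the extraction framework: replace $(w_n)$ by $u_n=T_1(w_{3n-1})\star w_{3n}\star T_1(w_{3n+1})$, so that each $u_n$ begins and ends with a constant letter (hence $E_n\setminus H_n\neq\emptyset$ and $\beta_n$ is a genuine element of the semigroup, which need not have an identity), while every $T_p(u_{n_1})\star\cdots\star T_p(u_{n_\lambda})$ is still a concatenation of $T_{p_i}(w_{m_i})$ with all $p_i\geq 1$ and therefore still lands in $g^{-1}(A_{i_0})$. With that replacement your argument is complete and coincides with the paper's.
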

\begin{proof}
Since  $L(\Sigma,\vec{k})=g^{-1}(A_1)\cup\ldots \cup g^{-1}(A_r)$, according to Theorem~\ref{thm:block-Ramsey1}, there exist $(w_n)_{n\in\nat}\subseteq L(\Sigma, \vec{k} ; \upsilon)$ with $w_n<w_{n+1}$ for every $n\in\nat$ and $1\leq i_0 \leq r$ satisfing
\begin{center}
$T_{p_1}(w_{n_1})\star \ldots \star T_{p_\lambda}(w_{n_\lambda})\in g^{-1}(A_{i_0})$, 
\end{center}
for every $\lambda\in\nat$, $n_1<\cdots<n_\lambda\in\nat$ and $p_1,\ldots,p_\lambda \in\nat$ with $1\leq p_j \leq k_{n_j}$ for $1\leq j \leq \lambda$.

Let $w_n=w^{n}_{q^{n}_1}\ldots w^{n}_{q^{n}_{l_n}}$ for every $n \in \nat$. We can suppose that $w^{n}_{q^{n}_1}, w^{n}_{q^{n}_{l_n}}\in \Sigma$ for every $n \in \nat$. Otherwise replace the sequence $(w_n)_{n\in\nat}$ by the sequence $(u_n)_{n\in\nat}$, where $u_n=T_1(w_{3n-1})\star w_{3n}\star T_1(w_{3n+1})$. We set 
$E_n =\{q^{n}_i:\; 1\leq i\leq l_n\}$ the domain of $w_n$, 
$H_n=\{q^{n}_i \in E_n: w^{n}_{q^{n}_{i}}=\upsilon \}$ and 
$\beta_n=\sum_{j\in E_n\setminus H_n}y_{w^n_j,j}\in X$. Then for every $\lambda\in\nat$, $n_1<\cdots<n_\lambda\in\nat$ and $p_1,\ldots,p_\lambda \in\nat$ with $1\leq p_j \leq k_{n_j}$ for $1\leq j \leq \lambda$

$(\beta_{n_1}+\sum_{t \in H_{n_1}}y_{p_1, t})+\ldots+(\beta_{n_\lambda}+
\sum_{t \in H_{n_\lambda}}y_{p_\lambda,t})=g(T_{p_1}(w_{n_1})\star \ldots \star T_{p_\lambda}(w_{n_\lambda}))\in A_{i_0}$.
\end{proof}

\section{Extended Ramsey type partition theorems for $\omega$-located words}

We will state and prove, in Theorem~\ref{thm:block-Ramsey} below, an extended, to every countable order,  Ramsey type partition theorem for variable $\omega$-located words over an infinite countable alphabet  dominated by an increasing sequence. It is an extension to every countable order $\xi$ of  Theorem~\ref{thm:block-Ramsey2} corresponding to the case $\xi=1$. Theorem~\ref{thm:block-Ramsey} extends Bergelson-Blass-Hindman's Ramsey type partition theorem (Theorem 5.1 in  \cite{BBH}) for located words over a finite alphabet, corresponding to the case of finite ordinals, and Furstenberg-Katznelson's (\cite{FuK}) Ramsey type partition theorem for words over a finite alphabet. 
 
Consequences of Theorem~\ref{thm:block-Ramsey} are multidimentional partition theorems for semigroups corresponding to each countable order $\xi$, providing strong simultaneous extension of the block-Ramsey partition theorem for every countable ordinal, proved in \cite{FN1}, and of van der Waerden theorem  \cite{vdW} for general semigroups. In Corollary~\ref{cor:multi} we present the case of finite ordinals, which imply the partition theorem proved by Beiglb$\ddot{o}$ck (Theorem 1.1 in \cite{Be}) for the particular case of commutative semigroups , and in Corollary~\ref{thm:van der Waerden1} we present the case $\xi=\omega$. 

The vehicle of proving this extended Ramsey type partition theorem (Theorem~\ref{thm:block-Ramsey}) is the Schreier system  $(L^\xi(\Sigma, \vec{k} ; \upsilon))_{\xi<\omega_1}$ (Definition~\ref{recursivethinblock}), consisted of families of finite orderly sequences of variable $\omega$-located words over the alphabet $\Sigma$ dominated by the sequence 
$\vec{k}$. Instrumental for this definition are the Schreier sets $\A_\xi$ consisted of finite subsets of $\nat$ defined initially in \cite{F1} and completelly in \cite{F3}, which are defined below employing (in case 3(iii)) the Cantor normal form of ordinals 
(cf. \cite{KM}, \cite{L}). 
\newline
We denote by $[X]^{<\omega}$ the set of all finite subsets and by $[X]^{<\omega}_{>0}$ the set of all non-empty, finite subsets of a set $X$. For $s_1, s_2 \in [\nat]^{<\omega}_{>0}$ we write $s_1 < s_2$ if $\max s_1 < \min s_2$.
\begin{defn}[\textsf{The Schreier system}, 
{[F1, Def. 7], [F2, Def. 1.5] [F3, Def. 1.4]}]
\label{Irecursivethin}
For every non-zero, countable, limit ordinal $\lambda$ choose and fix a strictly 
increasing sequence $(\lambda_n)_{n \in \nat}$ of successor ordinals smaller than $\lambda$ 
with $\sup_n \lambda_n = \lambda$. 
The system $(\A_\xi)_{\xi<\omega_1}$ is defined recursively as follows: 
\begin{itemize}
\item[(1)] $\A_0 = \{\emptyset\}$ and $\A_1 = \{\{n\} : n\in \nat\}$;
\item[(2)] $\A_{\zeta+1} = \{s\in [\nat]^{<\omega}_{>0} : s= \{n\} \cup s_1$, 
where $n\in \nat$, $\{n\} <s_1$ and $s_1\in \A_\zeta\}$; 
\item[(3i)] $\A_{\omega^{\beta+1}} = \{s\in [\nat]^{<\omega}_{>0} : 
s = \bigcup_{i=1}^n s_i$, where $n= \min s_1$, $s_1<\cdots < s_n$ and 
$s_1,\ldots, s_n\in \A_{\omega^\beta}\}$; 
\item[(3ii)] for a non-zero, countable limit ordinal $\lambda$, 
\newline
$\A_{\omega^\lambda} = \{s\in [\nat]^{<\omega}_{>0} : s\in \A_{\omega^{\lambda_n}}$
with $n= \min s\}$; and 
\item[(3iii)] for a limit ordinal $\xi$ such that $\omega^{\alpha}< \xi < \omega^{\alpha +1}$ for some $0< \alpha <\omega_1$, if \newline
$\xi = \omega^{\alpha} p
+ \sum_{i=1}^m \omega^{a_i} p_i$, where $m\in \nat$ with $m\ge0$, 
 $p,p_1,\ldots,p_m$ are natural numbers with $p,p_1,\ldots,p_m\ge1$ 
(so that either $p>1$, or $p=1$ and $m\ge 1$) and 
$a,a_1,\ldots,a_m$ are ordinals with $a>a_1>\cdots a_m >0$, 
\newline
$\A_\xi = \{s\in [\nat]^{<\omega}_{>0} :s= s_0 \cup (\bigcup_{i=1}^m s_i)$ 
with $s_m < \cdots < s_1 <s_0$,
$s_0= s_1^0\cup\cdots\cup s_p^0$ with $s_1^0<\cdots < s_p^0\in \A_{\omega^a}$,
and $s_i = s_1^i \cup\cdots\cup s_{p_i}^i$ with 
$s_1^i <\cdots<\ s_{p_i}^i\in \A_{\omega^{a_i}}$ $\forall\ 1\le i\le m\}$.
\end{itemize}
\end{defn} 

Let $\Sigma=\{\alpha_1, \alpha_2, \ldots \}$ be an infinite countable alphabet, $\upsilon \notin \Sigma$ a variable and $\vec{k}=(k_n)_{n\in\nat}\subseteq \nat$. We  define the \textit{finite orderly sequences of $\omega$-located words} over $\Sigma$ dominated by $\vec{k}$ as follows:
\begin{itemize}
\item[{}] $L^{<\infty}(\Sigma, \vec{k} ; \upsilon) = \{\bw = (w_1,\ldots,w_l) : l\in\nat, $ 
$w_1<\cdots < w_l\in L(\Sigma, \vec{k} ; \upsilon) \} \cup \{\emptyset \}$, 
\item[{}] $L^{<\infty}(\Sigma, \vec{k}) = \{\bw = (w_1,\ldots,w_l) : l\in\nat, $ 
$w_1<\cdots < w_l\in L(\Sigma, \vec{k}) \} \cup \{\emptyset \}$.
\end{itemize}

The Schreier system  $(L^\xi(\Sigma, \vec{k} ; \upsilon))_{\xi<\omega_1}$ is defined recursively as follows:
   
\begin{defn} 
[\textsf{The Schreier systems} $(L^\xi(\Sigma, \vec{k} ; \upsilon))_{\xi<\omega_1}$]
\label{recursivethinblock}
Let $\Sigma=\{\alpha_1, \alpha_2, \ldots \}$ be an infinite countable alphabet, ordered according to the natural numbers, $\upsilon \notin \Sigma$ a variable and $\vec{k}=(k_n)_{n\in\nat}\subseteq \nat$. We set:
\newline
$L^0(\Sigma, \vec{k} ; \upsilon)= \{\emptyset\}=L^0(\Sigma, \vec{k}) $, and
\newline
for every countable ordinal $\xi\ge1$, 
\newline
$L^\xi(\Sigma, \vec{k}; \upsilon) = \{(w_1,\ldots,w_l)\in L^{<\infty}(\Sigma, \vec{k}; \upsilon) : 
\{\min dom(w_1),\ldots, \min dom(w_l)\}\in\A_\xi \}$,
\newline
$L^\xi(\Sigma, \vec{k}) = \{(w_1,\ldots,w_l)\in L^{<\infty}(\Sigma, \vec{k}) : 
\{\min dom(w_1),\ldots, \min dom(w_l)\}\in\A_\xi \}$. 

\end{defn}

\begin{remark}\label{rem1.4} 
(i) $L^\xi(\Sigma, \vec{k}; \upsilon) \subseteq L^{<\infty}(\Sigma, \vec{k}; \upsilon)$ and
$\emptyset \notin L^\xi(\Sigma, \vec{k}; \upsilon)$ for every $\xi\ge1$.

(ii) $L^m(\Sigma, \vec{k} ; \upsilon) = \{(w_1,\ldots, w_m) : w_1<\cdots < w_m \in L(\Sigma, \vec{k} ; \upsilon)\}$ for $m\in \nat$. 

(iii) $L^\omega(\Sigma, \vec{k} ; \upsilon) = \{(w_1,\ldots, w_n)\in L^{<\infty}(\Sigma, \vec{k} ; \upsilon) : n\in \nat$, 
and $\min dom(w_1) = n\}$. 
\end{remark} 

The following proposition justifies the recursiveness of the system $(L^\xi(\Sigma, \vec{k} ; \upsilon))_{\xi<\omega_1}$. 
\newline
For a family $\F \subseteq L^{<\infty} (\Sigma\cup\{\upsilon\}, \vec{k})$ and $t\in L(\Sigma\cup\{\upsilon\}, \vec{k})$, we set

\begin{center}
$\F(t) = \{\bw\in L^{<\infty} (\Sigma\cup\{\upsilon\}, \vec{k}) : $ either $\bw=(w_1,\ldots,w_l)\neq \emptyset$ and $(t,w_1,w_2, \ldots,w_l) \in \F$ 
or $\bw=\emptyset$ and $(t) \in\F\}$, 
\end{center}
$\F -t = \{\bw \in \F :$ either $\bw = (w_1,\ldots,w_l)\neq \emptyset$ and $t <w_1$, or 
$\bw =\emptyset\}$. 

\begin{prop}\label{justification}
For every countable ordinal $\xi\ge 1$, there exists a concrete sequence 
$(\xi_n)_{n\in\nat}$ of countable ordinals with $\xi_n<\xi$ such that for $\Sigma=\{\alpha_1, \alpha_2, \ldots \}$ an infinite countable alphabet, $\upsilon \notin \Sigma$ a variable, $\vec{k}=(k_n)_{n\in\nat}\subseteq \nat$ and $t\in L(\Sigma, \vec{k}; \upsilon)$, with $\min dom(t)=n$, 
\begin{center}
$L^\xi(\Sigma, \vec{k}; \upsilon) (t) = L^{\xi_n}(\Sigma, \vec{k}; \upsilon)\cap (L^{<\infty}(\Sigma, \vec{k}; \upsilon) - t)$.
\end{center}

Moreover, $\xi_n =\zeta$ for every $n\in\nat$ if $\xi = \zeta+1$, and 
$(\xi_n)_{n\in\nat}$ is a strictly increasing sequence with $\sup_n \xi_n=\xi$ if $\xi$ 
is a limit ordinal.
\end{prop}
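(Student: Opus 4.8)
The plan is to reduce the statement for the located‑word Schreier system $(L^\xi(\Sigma,\vec k;\upsilon))_{\xi<\omega_1}$ to the already‑established recursive structure of the underlying Schreier sets $(\A_\xi)_{\xi<\omega_1}$ on $\nat$. Since, by Definition~\ref{recursivethinblock}, membership of $(w_1,\dots,w_l)$ in $L^\xi(\Sigma,\vec k;\upsilon)$ depends only on the set $\{\min dom(w_1),\dots,\min dom(w_l)\}\in\A_\xi$, everything is controlled by the corresponding "restriction/trace" identity for the sets $\A_\xi$. So the first step is to record (or re‑prove, citing [F1],[F2],[F3]) the combinatorial fact that for each countable $\xi\ge1$ there is a concrete sequence $(\xi_n)_{n\in\nat}$ of ordinals $<\xi$ so that for every $n\in\nat$
\begin{center}
$\{\,s\in[\nat]^{<\omega}_{>0} : \{n\}\cup s\in\A_\xi,\ \{n\}<s\,\}\ \cup\ \{\emptyset : \{n\}\in\A_\xi\}\ =\ \A_{\xi_n}\cap\{s : \{n\}<s\ \text{or}\ s=\emptyset\}$,
\end{center}
together with the bookkeeping on $(\xi_n)_{n}$: $\xi_n=\zeta$ when $\xi=\zeta+1$, and $(\xi_n)_n$ strictly increasing with supremum $\xi$ when $\xi$ is limit.

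Second, I would prove that combinatorial fact by transfinite induction on $\xi$, following the five cases of Definition~\ref{Irecursivethin}. For $\xi=1$ the set $\A_1=\{\{n\}\}$ forces $\xi_n=0$. For a successor $\xi=\zeta+1$, clause (2) says exactly $\{n\}\cup s\in\A_{\zeta+1}\iff s\in\A_\zeta$ and $\{n\}<s$, so $\xi_n=\zeta$ uniformly. For $\xi=\omega^{\beta+1}$, clause (3i) writes $s=\bigcup_{i=1}^{m}s_i$ with $m=\min s_1$ and the $s_i\in\A_{\omega^\beta}$; stripping the singleton $\{n\}$ from $s_1$ and invoking the induction hypothesis for $\omega^\beta$ shows the trace at $n$ is $\A_{\omega^\beta\cdot(n-1)+(\text{something}<\omega^\beta)}$, i.e. an ordinal of the shape one reads off by iterating the successor/limit analysis; here $(\xi_n)_n$ is strictly increasing with supremum $\omega^{\beta+1}$. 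For $\xi=\omega^\lambda$ with $\lambda$ limit, clause (3ii) literally says $s\in\A_{\omega^\lambda}\iff s\in\A_{\omega^{\lambda_n}}$ with $n=\min s$, so one simply sets $\xi_n$ to be the trace‑ordinal of $\A_{\omega^{\lambda_n}}$ at $n$ supplied by the induction hypothesis (and uses $\sup_n\omega^{\lambda_n}=\omega^\lambda$). The genuinely delicate case is (3iii), $\omega^{\alpha}<\xi<\omega^{\alpha+1}$ with Cantor normal form $\xi=\omega^\alpha p+\sum_{i=1}^{m}\omega^{a_i}p_i$: here removing the first singleton from the leading block $s_p^0$ (or, if $p=1$, passing through to the next lower term) requires peeling one $\A_{\omega^a}$‑block down by one element via the induction hypothesis, and then reassembling the remainder into a legitimate Cantor‑normal‑form expression for $\xi_n$. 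Getting this reassembly to land on a single well‑defined ordinal $\xi_n<\xi$, and verifying strict monotonicity in $n$ with supremum $\xi$, is where the real work lies.

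Third, with the $\A_\xi$‑statement in hand, the proposition follows by unwinding definitions. Given $t\in L(\Sigma,\vec k;\upsilon)$ with $\min dom(t)=n$: a nonempty $\bw=(w_1,\dots,w_l)$ lies in $L^\xi(\Sigma,\vec k;\upsilon)(t)$ iff $(t,w_1,\dots,w_l)\in L^\xi(\Sigma,\vec k;\upsilon)$ iff $\{n,\min dom(w_1),\dots,\min dom(w_l)\}\in\A_\xi$ with $n<\min dom(w_1)$ iff (by the trace identity) $\{\min dom(w_1),\dots,\min dom(w_l)\}\in\A_{\xi_n}$ and $t<w_1$ iff $\bw\in L^{\xi_n}(\Sigma,\vec k;\upsilon)\cap(L^{<\infty}(\Sigma,\vec k;\upsilon)-t)$; the empty‑sequence case ($\bw=\emptyset$) matches because $(t)\in L^\xi(\Sigma,\vec k;\upsilon)\iff\{n\}\in\A_\xi\iff\emptyset\in\A_{\xi_n}$, and $\emptyset\in L^{<\infty}(\Sigma,\vec k;\upsilon)-t$ always. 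The "moreover" clauses on $(\xi_n)_n$ transfer verbatim from the $\A_\xi$ level. I expect no essential difficulty beyond case (3iii); the rest is definition‑chasing, and the whole argument is insensitive to the alphabet and to $\vec k$ precisely because $L^\xi$ is pulled back from $\A_\xi$ along $\bw\mapsto\{\min dom(w_i)\}$.
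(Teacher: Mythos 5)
Your proposal is correct and follows essentially the same route as the paper: the paper's proof consists precisely of citing the trace identity $\A_\xi(n)=\A_{\xi_n}\cap[\{n+1,n+2,\ldots\}]^{<\omega}$ (Theorem~1.6 of [F3]) together with the "moreover" bookkeeping, and then the definition-chasing you describe in your third step, since $L^\xi$ is pulled back from $\A_\xi$ along $\bw\mapsto\{\min dom(w_i)\}$. Your optional re-derivation of the $\A_\xi$ identity leaves case (3iii) unfinished, but that is immaterial because, as you note, the citation to [F3] already supplies it.
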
 

\begin{proof}
It follows from Theorem~1.6 in \cite{F3}, according to which for every countable ordinal $\xi >0$ there exists a concrete sequence 
$(\xi_n)_{n\in\nat}$ of countable ordinals with  $\xi_n <\xi$, such that $\A_\xi (n) = \A_{\xi_n} \cap [\{n+1,n+2,\ldots\}]^{<\omega}$ for every $ n\in \nat$, where, 
\newline
$\A_\xi(n) = \{s\in [\nat]^{<\omega} : s \in [\nat]^{<\omega}_{>0}, n < \min s$ and $\{n\}\cup s\in \A_\xi$ or $s=\emptyset$ and $\{n\}\in \A_\xi \}$.
\newline
Moreover, $\xi_n = \zeta$ for every $n\in\nat$ if $\xi = \zeta+1$, and 
$(\xi_n)_{n\in\nat}$ is a strictly increasing sequence with $\sup_n \xi_n=\xi$ if $\xi$ 
is a limit ordinal. 
\end{proof}

In order to state and prove the principal result of this section, a Ramsey type partition theorem for $\omega$-located words extended to every countable order, we need the following notation:

\subsection*{Notation}
Let $\Sigma=\{\alpha_1, \alpha_2, \ldots \}$ be an infinite countable alphabet, ordered according to the natural numbers, $\upsilon \notin \Sigma$ a variable and  $\vec{k}=(k_n)_{n\in\nat}\subseteq \nat$ an increasing sequence. For $\vec{w}=(w_n)_{n\in\nat} \in L^\infty (\Sigma, \vec{k} ; \upsilon)$, $\bw = (w_1,\ldots,w_l)\in L^{<\infty}(\Sigma, \vec{k} ; \upsilon)$ and $t\in L(\Sigma, \vec{k} ; \upsilon)$, we set:

$EV^{<\infty}(\vec{w}) = \{\bu=(u_1,\ldots,u_l)\in L^{<\infty} (\Sigma, \vec{k} ; \upsilon) : l\in\nat, u_1,\ldots,u_l\in EV(\vec{w})\}\cup \{\emptyset \}$; 

$E^{<\infty}(\vec{w}) = \{\bu=(u_1,\ldots,u_l)\in L^{<\infty} (\Sigma, \vec{k}) : l\in\nat, u_1,\ldots,u_l\in E(\vec{w})\}\cup \{\emptyset \}$;   

$EV(\bw)=\{T_{p_1}(w_{n_1})\star \ldots \star T_{p_\lambda}(w_{n_\lambda})\in L(\Sigma, \vec{k} ; \upsilon)  : 1\leq n_1<\cdots<n_\lambda\leq l$ and
\begin{center}
$p_1,\ldots,p_\lambda\in\nat\cup\{0\}$ with $0\leq p_i \leq k_{n_i}$ for $1\leq i \leq \lambda$ and $0 \in \{ p_1,\ldots,p_\lambda\} \}$;
\end{center}

$E(\bw)=\{T_{p_1}(w_{n_1})\star \ldots \star T_{p_\lambda}(w_{n_\lambda})\in L(\Sigma, \vec{k} ; \upsilon)  : 1\leq n_1<\cdots<n_\lambda\leq l$ and
\begin{center}
$p_1,\ldots,p_\lambda\in\nat$ with $1\leq p_i \leq k_{n_i}$ for $1\leq i \leq \lambda \}$.
\end{center}
$EV^{<\infty}(\bw) = \{\bu=(u_1,\ldots,u_l)\in L^{<\infty} (\Sigma, \vec{k} ; \upsilon) : l\in\nat, u_1,\ldots,u_l\in EV(\bw)\}\cup \{\emptyset \}$.
\newline
Observe that the sets $EV(\bw)$, $E(\bw)$ are finite. Also, we set

$\vec{w} -t = (w_n)_{n\geq l}\in L^\infty (\Sigma, \vec{k} ; \upsilon)$, where $l=\min \{n\in\nat : t<w_n\}$, and

$\vec{w}-\bw =  \vec{w} -w_l$.

\begin{thm}
[\textsf{Ramsey type partition theorem on Schreier families for variable $\omega$-located words }]
\label{thm:block-Ramsey}
Let $\xi\geq 1$ be a countable ordinal, $\Sigma=\{\alpha_1, \alpha_2, \ldots \}$ be an infinite countable alphabet, $\upsilon \notin \Sigma$ a variable and  $\vec{k}=(k_n)_{n\in\nat}\subseteq \nat$ an increasing sequence. For every family $\F \subseteq L^{<\infty}(\Sigma, \vec{k} ; \upsilon)$ of finite orderly sequences of variable $\omega$-located words and every infinite orderly sequence $\vec{w} \in L^\infty (\Sigma, \vec{k} ; \upsilon)$ of variable $\omega$-located words there exists an extraction $\vec{u}\prec \vec{w}$ of $\vec{w}$ over $\Sigma$ such that 

 either $L^\xi(\Sigma, \vec{k} ; \upsilon) \cap EV^{<\infty}(\vec{u})\subseteq \F$, or 
$L^\xi(\Sigma, \vec{k} ; \upsilon) \cap EV^{<\infty}(\vec{u})\subseteq L^{<\infty}(\Sigma, \vec{k} ; \upsilon)\setminus \F$. 
\end{thm}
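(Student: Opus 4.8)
The plan is to prove Theorem~\ref{thm:block-Ramsey} by transfinite induction on the countable ordinal $\xi\geq 1$, using Proposition~\ref{justification} to descend to smaller ordinals. For the base case $\xi=1$, note that $L^1(\Sigma,\vec{k};\upsilon)=\{(w):w\in L(\Sigma,\vec{k};\upsilon)\}$, so the statement says: for $\F\subseteq L^{<\infty}$, viewing the singletons $(u)$ with $u\in EV(\vec{w})$, we want an extraction $\vec{u}\prec\vec{w}$ with $EV(\vec{u})$ entirely inside $\{u:(u)\in\F\}$ or entirely outside it. This is exactly a $2$-coloring $L(\Sigma,\vec{k};\upsilon)=A_1\cup A_2$ restricted to $EV(\vec{w})$, and Theorem~\ref{thm:block-Ramsey2} (applied with $r=2$, $s=1$) furnishes an extraction $\vec{u}$ of $\vec{w}$ with $EV(\vec{u})\subseteq A_{i_0}$ for some $i_0$, which is precisely what is needed.

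For the inductive step, assume the theorem holds for all ordinals $\eta$ with $1\leq\eta<\xi$, and fix $\F\subseteq L^{<\infty}(\Sigma,\vec{k};\upsilon)$ and $\vec{w}\in L^\infty(\Sigma,\vec{k};\upsilon)$. By Proposition~\ref{justification}, choose the concrete sequence $(\xi_n)_{n\in\nat}$ with $\xi_n<\xi$ so that for $t\in L(\Sigma,\vec{k};\upsilon)$ with $\min dom(t)=n$ one has $L^\xi(\Sigma,\vec{k};\upsilon)(t)=L^{\xi_n}(\Sigma,\vec{k};\upsilon)\cap(L^{<\infty}(\Sigma,\vec{k};\upsilon)-t)$. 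The strategy is a fusion (diagonal) argument: build a decreasing chain of extractions $\vec{w}=\vec{w}^{(0)}\succ\vec{w}^{(1)}\succ\vec{w}^{(2)}\succ\cdots$ and pick $u_n\in EV(\vec{w}^{(n-1)})$, the $n$-th term of $\vec{u}$, so that all later choices come from an extraction of $\vec{w}^{(n)}$ with $u_n<(\text{everything in }\vec{w}^{(n)})$. At stage $n$, having chosen $u_1<\cdots<u_n$, apply the induction hypothesis to the ordinal $\xi_{m}$ where $m=\min dom(u_n)$, to the shifted family $\F(u_1,\ldots,u_n)=\{\bz:(u_1,\ldots,u_n)\star\bz\in\F\}$ suitably interpreted via the $\F(t)$ and $\F-t$ operations defined before Proposition~\ref{justification}, and to the current tail extraction; this yields a further extraction homogeneous for that smaller ordinal. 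Interleaving this with Theorem~\ref{thm:block-Ramsey2} at each step to also stabilize the "color" of $(u_1,\ldots,u_n)$ itself (i.e. whether it lies in $\F$), and taking $\vec{u}=(u_n)_{n\in\nat}$ as the fusion, one checks that every $\bu=(u_{i_1},\ldots,u_{i_l})\in L^\xi(\Sigma,\vec{k};\upsilon)\cap EV^{<\infty}(\vec{u})$ is handled: writing $j=\min dom(u_{i_1})$, the tail $(u_{i_2},\ldots,u_{i_l})$ lies in $L^{\xi_j}\cap EV^{<\infty}(\vec{u}-u_{i_1})$, which by construction of stage $i_1$ has been forced entirely inside or entirely outside $\F(u_{i_1})$; combined with a König's lemma / pigeonhole argument on the finitely many possible "decision patterns" to get a single global alternative, this gives the dichotomy.

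The key technical device is that $EV(\cdot)$ behaves well under extraction and shifting: $\vec{u}\prec\vec{w}\iff EV(\vec{u})\subseteq EV(\vec{w})$, the sets $EV(\bw)$ are finite, and $T_p$ distributes over $\star$, so an extracted word of an extracted word is again an extracted word of the original — this is what legitimizes the recursion on $\vec{w}^{(n)}$. I would also need the observation, immediate from Definition~\ref{recursivethinblock} and Remark~\ref{rem1.4}(iii)-style reasoning, that membership of a finite orderly sequence in $L^\xi(\Sigma,\vec{k};\upsilon)$ depends only on the set of first coordinates $\{\min dom(w_1),\ldots,\min dom(w_l)\}$ lying in $\A_\xi$, so the Schreier structure transfers verbatim through the bijection $\phi$ of Theorem~\ref{thm:block-Ramsey2} used to convert colorings of $EV(\vec{w})$ into colorings of $L(\Sigma,\vec{k};\upsilon)$.

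The main obstacle I anticipate is the bookkeeping in the fusion: one must ensure that at stage $n$ the induction hypothesis is applied to the \emph{correct} smaller ordinal $\xi_{\min dom(u_n)}$ — which depends on the word $u_n$ chosen at that very stage — and that the successive extractions are nested compatibly with the requirement $u_n<\vec{w}^{(n)}$, so that the fusion $\vec{u}$ actually satisfies $\vec{u}\prec\vec{w}$ and every relevant subsequence was "decided" at the stage governed by its minimum. Making precise the reduction of $\F$ at each stage (the interplay of the $\F(t)$ and $\F-t$ operations, and matching it to $L^\xi(\Sigma,\vec{k};\upsilon)(t)=L^{\xi_n}(\Sigma,\vec{k};\upsilon)\cap(L^{<\infty}-t)$), and then collapsing the finitely many local decisions into one global $\subseteq\F$ vs. $\subseteq L^{<\infty}\setminus\F$ alternative, is where the care is needed; the limit-ordinal case of $\xi$ introduces no new difficulty beyond $\sup_n\xi_n=\xi$, which Proposition~\ref{justification} already guarantees.
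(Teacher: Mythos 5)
Your overall architecture coincides with the paper's: transfinite induction on $\xi$ with base case Theorem~\ref{thm:block-Ramsey2}, descent to the ordinals $\xi_n<\xi$ via Proposition~\ref{justification}, and a diagonal/fusion construction (which the paper isolates as Lemma~\ref{lem:block-Ramsey}) producing a single extraction $\vec{u}_1\prec\vec{w}$ such that for every $t\in EV(\vec{u}_1)$ and every $\vec{s}\prec\vec{u}_1-t$ the dichotomy for $\F(t)$ relative to $L^{\xi_{\min dom(t)}}(\Sigma,\vec{k};\upsilon)$ has already been decided. One cosmetic remark: at each stage you should strip a \emph{single} extracted word $t$, ranging over the finite set $EV((u_1,\ldots,u_n))$, rather than the whole initial segment $(u_1,\ldots,u_n)$, since Proposition~\ref{justification} only provides the recursion $L^\xi(\Sigma,\vec{k};\upsilon)(t)=L^{\xi_n}(\Sigma,\vec{k};\upsilon)\cap(L^{<\infty}(\Sigma,\vec{k};\upsilon)-t)$ for one first word at a time; this is easily repaired.

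The substantive gap is your final step. Collapsing the local decisions into one global alternative by ``a K\"onig's lemma / pigeonhole argument on the finitely many possible decision patterns'' does not work as stated: after the fusion, each of the \emph{infinitely} many $t\in EV(\vec{u}_1)$ carries a binary decision (which horn of the dichotomy holds for $\F(t)$ on $\vec{u}_1-t$), and no finite pigeonhole forces these decisions to agree. What is needed --- and what the paper does --- is one more application of Theorem~\ref{thm:block-Ramsey2} to the two-colouring of the variable $\omega$-located words induced by $\F_1=\{t\in EV(\vec{u}_1): L^{\xi}(\Sigma,\vec{k};\upsilon)(t)\cap EV^{<\infty}(\vec{u}_1-t)\subseteq\F(t)\}$ versus its complement; this yields $\vec{u}\prec\vec{u}_1$ with $EV(\vec{u})\subseteq\F_1$ or $EV(\vec{u})\subseteq L(\Sigma,\vec{k};\upsilon)\setminus\F_1$, and the hereditariness of the decisions under further extraction (condition (ii) of the diagonal lemma) then delivers the stated global dichotomy. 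You do invoke Theorem~\ref{thm:block-Ramsey2} ``at each step,'' but for the wrong purpose (stabilising whether $(u_1,\ldots,u_n)$ itself lies in $\F$, which plays no role here); redirecting that one extra application to the colouring by decision closes the proof.
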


In the proof of Theorem~\ref{thm:block-Ramsey} we will make use of the following diagonal argument.  

\begin{lem}\label{lem:block-Ramsey}
Let $\Sigma=\{\alpha_1, \alpha_2, \ldots \}$ be an infinite countable alphabet, $\upsilon \notin \Sigma$ a variable, $\vec{k}=(k_n)_{n\in\nat}\subseteq \nat$ an increasing sequence, $\vec{w} = (w_n)_{n\in\nat} \in L^\infty (\Sigma, \vec{k} ; \upsilon)$, 
\newline
$\Pi = \{(t,\vec{s}): t\in L(\Sigma, \vec{k} ; \upsilon)$, 
$\vec{s} = (s_n)_{n\in\nat}\in L^\infty (\Sigma, \vec{k} ; \upsilon)$ with $\vec{s}\prec \vec{w}$ and $t<s_n \forall\ n\in\nat\}$.
\newline
If a subset $\R$ of $\Pi$ satisfies
\begin{itemize}
\item[(i)] for every $(t,\vec{s})\in\Pi$, there exists $(t,\vec{s}_1)\in\R$ with 
$\vec{s}_1 \prec \vec{s}$; and
\item[(ii)] for every $(t,\vec{s}) \in\R$ and $\vec{s}_1 \prec \vec{s}$, we have $(t,\vec{s}_1)\in\R$,
\end{itemize}
then there exists $\vec{u} \prec \vec{w}$, such that 
$(t,\vec{s})\in \R$ for all $t\in EV(\vec{u})$ and $\vec{s} \prec \vec{u}-t$.
\end{lem}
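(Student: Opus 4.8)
This is a classical diagonalization lemma in the Galvin--Prikry/Nash-Williams style, adapted to the semigroup of extracted variable $\omega$-located words, so the plan is to build $\vec u$ one entry at a time while maintaining a nested sequence of extractions witnessing membership in $\R$. First I would set up the recursion: enumerate $EV(\vec w)$ (which is countable) in a fashion compatible with the order $<$, say so that $t\in EV(\vec w)$ with smaller $\min dom$ come first and, among words whose domain lies in $\{1,\ldots,N\}$, all such words are listed before any word with domain meeting $\{N+1,\ldots\}$. Then I would construct a decreasing chain of extractions $\vec w \succ \vec s^{(0)} \succ \vec s^{(1)} \succ \cdots$ together with variable words $u_1 < u_2 < \cdots$ forming the desired $\vec u$: at stage $m$, having fixed $u_1,\ldots,u_{m-1}$ (each an extracted variable word of the previously chosen extraction), I would look at the finitely many $t\in EV(u_1,\ldots,u_{m-1})$ not yet ``handled'' (more precisely, all $t$ in the finite set $EV((u_1,\ldots,u_{m-1}))$ whose domain is below some growing bound) and, using hypothesis (i) repeatedly --- once for each such $t$, since there are only finitely many --- shrink the current extraction to one $\vec s$ with $(t,\vec s)\in\R$ simultaneously for every such $t$; hypothesis (ii) guarantees that these finitely many shrinkings do not spoil previously arranged pairs, since any further extraction of $\vec s$ still lies in $\R$ with the same $t$.

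\textbf{The diagonal step.} Having obtained such an $\vec s$ at stage $m$, I would choose $u_m\in EV(\vec s)$ to be an extracted variable word with $u_{m-1}<u_m$ (for definiteness the ``next'' available block of $\vec s$, e.g.\ $u_m = T_{p}(s_j)$ for a suitable first index $j$ and $p=0$, which lies in $EV(\vec s)\subseteq EV(\vec w)$), set $\vec s^{(m)} = \vec s - u_m$, and continue. Put $\vec u = (u_n)_{n\in\nat}$. Since each $u_m\in EV(\vec s^{(m-1)})\subseteq EV(\vec w)$ and the $u_m$ are strictly increasing in $<$, we have $\vec u\in EV^\infty(\vec w)$, i.e.\ $\vec u\prec\vec w$. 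The point of the enumeration choice is that every $t\in EV(\vec u)$ is, by construction, an extracted variable word of some finite initial segment $(u_1,\ldots,u_{m-1})$, hence was ``handled'' at some finite stage $m=m(t)$; and for that stage we arranged $(t,\vec s^{(m-1)})\in\R$. Now if $\vec r \prec \vec u - t$ is any extraction, then in particular $\vec r \prec \vec s^{(m-1)}$ because the tail of $\vec u$ past $t$ is an extraction of $\vec s^{(m-1)}$ (the blocks $u_m, u_{m+1},\ldots$ were drawn successively from $\vec s^{(m-1)}, \vec s^{(m)},\ldots$, each a further extraction). Then hypothesis (ii) applied to $(t,\vec s^{(m-1)})\in\R$ gives $(t,\vec r)\in\R$, which is exactly the conclusion.

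\textbf{Main obstacle and bookkeeping.} The substantive point to get right is the claim ``every $t\in EV(\vec u)$ is an extracted variable word of a proper finite initial segment $(u_1,\ldots,u_{m-1})$ and $\vec u - t \prec \vec s^{(m-1)}$'': this needs the enumeration/stage assignment to be tuned so that when $t$ uses blocks up to $u_{m-1}$, the stage $m$ at which we consult hypothesis (i) for $t$ has already passed, i.e.\ $t$ appears on the handled list at or before stage $m$. Concretely I would let the bound used at stage $m$ be large enough to include all of $EV((u_1,\ldots,u_{m-1}))$ --- this is a \emph{finite} set once $u_1,\ldots,u_{m-1}$ are fixed --- so that the list of $t$'s handled at stage $m$ is exactly $EV((u_1,\ldots,u_{m-1}))$; then the stage assignment is automatic and no cofinality subtlety arises. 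The only mild care is verifying that at each stage there is indeed room to pick $u_m$ with $u_{m-1}<u_m$ inside $EV(\vec s)$: this holds because an extraction $\vec s=(s_n)$ has $s_n<s_{n+1}$ with $\min dom(s_n)\to\infty$, so $T_0(s_n)=s_n\in EV(\vec s)$ for all large $n$, and all but finitely many of these exceed $u_{m-1}$. Everything else --- the transitivity of $\prec$, the fact that finitely many applications of (i) can be composed --- is routine.
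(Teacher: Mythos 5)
Your proposal is correct and follows essentially the same diagonalization as the paper: at each stage you handle the finitely many elements of $EV((u_1,\ldots,u_{m-1}))$ by composing finitely many applications of hypothesis (i), use (ii) to preserve earlier commitments, take the next block as $u_m$, and finish by noting that the tail of $\vec u$ past the relevant stage is an extraction of that stage's sequence so that (ii) yields the conclusion. The only differences from the paper's proof are cosmetic (indexing of the stages and the explicit naming of the intermediate extractions $\vec s_{n+1}^1\succ\cdots\succ\vec s_{n+1}^l$).
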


\begin{proof} 
Let $u_0 = w_1$. 
According to condition (i), there exists $\vec{s}_1 = (s^1_n)_{n\in\nat} \in 
L^\infty (\Sigma, \vec{k} ; \upsilon)$ 
with $\vec{s}_1\prec\vec{w}-u_0$ such that 
$(u_0,\vec{s}_1)\in \R$. 
Let $u_1 = s^1_1$. 
Of course, $u_0<u_1$ and $u_0, u_1 \in EV(\vec{w})$. 
We assume now that there have been constructed 
$\vec{s}_1,\ldots,\vec{s}_n \in L^\infty (\Sigma, \vec{k} ; \upsilon)$ and $u_0,u_1,\ldots,u_n \in EV(\vec{w})$, with 
$\vec{s}_n \prec \cdots \prec \vec{s}_1 \prec \vec{w}$, $u_0<u_1<\cdots < u_n$ and 
$(t,\vec{s}_i)\in\R$ for all $1\le i\le n$ and $t\in EV((u_0,\ldots,u_{i-1}))$.

We will construct $\vec{s}_{n+1}$ and $u_{n+1}$. 
Let $\{ t_1,\ldots, t_l\} = EV ((u_0,\ldots, u_n))$. 
According to condition (i), there exist 
$\vec{s}_{n+1}^1,\ldots, \vec{s}_{n+1}^l \in L^\infty (\Sigma, \vec{k} ; \upsilon)$ such that 
$\vec{s}_{n+1}^l \prec\cdots \prec \vec{s}_{n+1}^1 \prec \vec{s}_n -u_n$ and 
$(t_i, \vec{s}_{n+1}^i)\in \R$ for every $1\le i\le l$. 
Set $\vec{s}_{n+1} = \vec{s}_{n+1}^l$. If $\vec{s}_{n+1} = (s^{n+1}_n)_{n\in\nat}$, set $u_{n+1} = s^{n+1}_1$.  
Of course $u_n < u_{n+1}$, $u_{n+1} \in EV(\vec{w})$ and, according to condition (ii), 
$(t_i, \vec{s}_{n+1})\in \R$ for all $1\le i\le l$. 

Set $\vec{u} = (u_0,u_1,u_2,\ldots) \in L^\infty (\Sigma, \vec{k} ; \upsilon)$. Then 
$\vec{u} \prec \vec{w}$, since $u_0<u_1< \ldots \in EV(\vec{w})$. 
Let $t\in EV(\vec{u})$ and $\vec{s} \prec \vec{u}-t$. 
Set $n_0 = \min \{n\in\nat  : t\in EV((u_0,u_1,\ldots,u_n))\}$. Since $t\in EV((u_0,u_1,\ldots,u_{n_0}))$, 
we have $(t,\vec{s}_{n_0+1})\in \R$. Then, according to (ii), we have $(t,\vec{u}-u_{n_0})\in\R$, since 
$\vec{u}-u_{n_0}\prec \vec{s}_{n_0+1}$, and also $(t,\vec{s}) \in\R$, since $\vec{s}\prec\vec{u}-u_{n_0}=\vec{u}-t$.
\end{proof}

\begin{proof}[Proof of Theorem~\ref{thm:block-Ramsey}] 
Let $\F\subseteq L^{<\infty}(\Sigma, \vec{k} ; \upsilon)$ and $\vec{w} \in L^\infty (\Sigma, \vec{k} ; \upsilon)$. For $\xi=1$ the theorem is valid, according to Theorem~\ref{thm:block-Ramsey2}. Let $\xi>1$. Assume that the theorem is valid for every $\zeta <\xi$. Let $t\in L(\Sigma, \vec{k} ; \upsilon)$ with $\min dom(t)=n$ and $\vec{s} = (s_n)_{n\in\nat}\in L^\infty (\Sigma, \vec{k} ; \upsilon)$ with $\vec{s}\prec \vec{w}$ and $t<s_n$ for all $ n\in\nat$. According to Proposition~\ref{justification}, there exists $\xi_n<\xi$ such that 
\begin{center}
$L^\xi(\Sigma, \vec{k}; \upsilon) (t) = L^{\xi_n}(\Sigma, \vec{k}; \upsilon)\cap (L^{<\infty}(\Sigma, \vec{k}; \upsilon) - t)$.
\end{center}
Using the induction hypothesis, there exists $\vec{s}_1 \prec\vec{s}$ such that 
\newline
either $L^{\xi_n}(\Sigma, \vec{k} ; \upsilon) \cap EV^{<\infty}(\vec{s}_1)\subseteq \F(t)$, or 
$L^{\xi_n}(\Sigma, \vec{k} ; \upsilon) \cap EV^{<\infty}(\vec{s}_1)\subseteq L^{<\infty}(\Sigma, \vec{k} ; \upsilon)\setminus \F(t)$.
\newline
Then $\vec{s}_1 \prec \vec{s} \prec \vec{w}$, and  

either $L^{\xi}(\Sigma, \vec{k} ; \upsilon)(t) \cap EV^{<\infty}(\vec{s}_1)\subseteq \F(t)$, 

or $L^{\xi}(\Sigma, \vec{k} ; \upsilon)(t) \cap EV^{<\infty}(\vec{s}_1)\subseteq L^{<\infty}(\Sigma, \vec{k} ; \upsilon)\setminus \F(t)$.
\newline
Let $\R= \{(t,\vec{s}): t\in L(\Sigma, \vec{k} ; \upsilon)$, 
$\vec{s} = (s_n)_{n\in\nat}\in L^\infty (\Sigma, \vec{k} ; \upsilon),\vec{s}\prec \vec{w}$, $t<s_n \forall\ n\in\nat,$ and 

either $L^{\xi}(\Sigma, \vec{k} ; \upsilon)(t) \cap EV^{<\infty}(\vec{s})\subseteq \F(t)$ 

or $L^{\xi}(\Sigma,\vec{k}; \upsilon)(t)\cap EV^{<\infty}(\vec{s})\subseteq L^{<\infty}(\Sigma,\vec{k}; \upsilon)\setminus\F(t)\}$.
\newline
The family $\R$ satisfies the conditions (i) (by the above arguments) and 
(ii) (obviously) of Lemma~\ref{lem:block-Ramsey}. 
Hence, there exists $\vec{u}_1 \prec \vec{w}$ such that 
$(t,\vec{s})\in \R$ for all $t\in EV(\vec{u}_1)$ and $\vec{s} \prec \vec{u}_1-t$.
\newline
Let $\F_1 = \{t\in EV(\vec{u}_1) : L^{\xi}(\Sigma, \vec{k} ; \upsilon)(t) \cap EV^{<\infty}(\vec{u}_1-t)\subseteq \F(t)\}$. 
\newline
We use the induction hypothesis for $\xi=1$ (Theorem~\ref{thm:block-Ramsey2}). 
Then there exists a variable extraction $\vec{u} \prec \vec{u}_1$ of $\vec{u}_1$ such that  

 either $EV(\vec{u})\subseteq \F_1$, or 
$EV(\vec{u})\subseteq L(\Sigma, \vec{k} ; \upsilon)\setminus \F_1$.
\newline
Since $\vec{u} \prec \vec{u}_1$ we have that $EV(\vec{u})\subseteq EV(\vec{u}_1)$, 
and, consequently, that $(t,\vec{u}-t)\in \R$ for all $t\in EV(\vec{u})$. Thus 

 either $L^{\xi}(\Sigma, \vec{k} ; \upsilon)(t) \cap EV^{<\infty}(\vec{u}-t)\subseteq \F(t)$ 
for all $t\in EV(\vec{u})$,

 or $L^{\xi}(\Sigma, \vec{k}; \upsilon )(t) \cap EV^{<\infty}(\vec{u}-t)\subseteq L^{<\infty}(\Sigma, \vec{k} ; \upsilon)\setminus \F(t)$ for all $t\in EV(\vec{u})$.
\newline
Hence,

 either $L^\xi(\Sigma, \vec{k} ; \upsilon) \cap EV^{<\infty}(\vec{u})\subseteq \F$, or 
$L^\xi(\Sigma, \vec{k} ; \upsilon) \cap EV^{<\infty}(\vec{u})\subseteq L^{<\infty}(\Sigma, \vec{k} ; \upsilon)\setminus \F$.
\end{proof}
\begin{remark}\label{rem. D}
(i) The particular case of Theorem~\ref{thm:block-Ramsey} for $\vec{k}=(k_n)_{n\in\nat}\subseteq \nat$ with $k_n=k_1$ for every $n\in\nat$ gives an extended to every countable ordinal $\xi$ Ramsey type partition theorem for variable located words over a finite alphabet, which in turn contains Bergelson-Blass-Hindman's Ramsey type partition theorem (Theorem 5.1 in  \cite{BBH}) as the special case $\xi$ a finite ordinal.
\newline
(ii) The case $k_n=1$ for every $n\in\nat$ of Theorem~\ref{thm:block-Ramsey} implies the block Ramsey partition theorem for every countable ordinal $\xi$ proved in \cite{FN1} (Theorem 1.6), which in turn contains Milliken-Taylor's partition theorem (\cite{M}, \cite{T}), as the special case $\xi<\omega$.
\end{remark}
Analogously to Theorem~\ref{thm:block-Ramsey} can be proved the following extended to every countable order  Ramsey type partition theorem for $\omega$-located words. 
\begin{thm}
[\textsf{Ramsey type partition theorem on Schreier families for $\omega$-located words }]
\label{thm:block-Ramsey8}
Let $\xi\geq 1$ be a countable ordinal, $\Sigma=\{\alpha_1, \alpha_2, \ldots \}$ be an infinite countable alphabet, $\upsilon \notin \Sigma$ a variable and  $\vec{k}=(k_n)_{n\in\nat}\subseteq \nat$ an increasing sequence. For every family $\G \subseteq L^{<\infty}(\Sigma, \vec{k})$ of finite orderly sequences of $\omega$-located words and every infinite orderly sequence $\vec{w} \in L^\infty (\Sigma, \vec{k} ; \upsilon)$ of variable $\omega$-located words there exists an extraction $\vec{u}\prec \vec{w}$ of $\vec{w}$ over $\Sigma$ such that  either $L^\xi(\Sigma, \vec{k} ) \cap E^{<\infty}(\vec{u})\subseteq \G$, or 
$L^\xi(\Sigma, \vec{k}) \cap E^{<\infty}(\vec{u})\subseteq L^{<\infty}(\Sigma, \vec{k})\setminus \G$. 
\end{thm}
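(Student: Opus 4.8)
The plan is to mimic the proof of Theorem~\ref{thm:block-Ramsey} essentially verbatim, replacing the family $\F\subseteq L^{<\infty}(\Sigma,\vec k;\upsilon)$ of variable sequences by the family $\G\subseteq L^{<\infty}(\Sigma,\vec k)$ of constant sequences, and replacing $EV$-data by $E$-data. First I would check the base case $\xi=1$: for a sequence $\vec w\in L^\infty(\Sigma,\vec k;\upsilon)$ and $\G\subseteq L^{<\infty}(\Sigma,\vec k)$, split $L(\Sigma,\vec k)=C_1\cup C_2$ according to membership of the length-one constant sequences in $\G$; applying Theorem~\ref{thm:block-Ramsey2} with $r=1$ and $s=2$ yields an extraction $\vec u\prec\vec w$ with $E(\vec u)$ monochromatic, i.e.\ either $E(\vec u)\subseteq\G$ or $E(\vec u)\subseteq L(\Sigma,\vec k)\setminus\G$, which gives the $\xi=1$ statement since the length-one members of $L^1(\Sigma,\vec k)\cap E^{<\infty}(\vec u)$ are exactly the elements of $E(\vec u)$.

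For the inductive step ($\xi>1$), I would first record the analogue of Proposition~\ref{justification} for the constant Schreier system: for $t\in L(\Sigma,\vec k;\upsilon)$ with $\min dom(t)=n$, one has $L^\xi(\Sigma,\vec k)(t)=L^{\xi_n}(\Sigma,\vec k)\cap(L^{<\infty}(\Sigma,\vec k)-t)$ with the same concrete ordinals $\xi_n<\xi$ coming from \cite{F3}, since the definition of $L^\xi(\Sigma,\vec k)$ depends only on $\{\min dom(w_1),\ldots,\min dom(w_l)\}\in\A_\xi$, exactly as for $L^\xi(\Sigma,\vec k;\upsilon)$. Here there is one subtlety to address: the first entry of a sequence in $L^\xi(\Sigma,\vec k)(t)$ is a \emph{constant} $\omega$-located word $z$ with $t<z$, but the base point $t$ is a \emph{variable} word; this causes no problem because the defining condition only sees $\min dom$, and extracted constant words of $\vec s\prec\vec w-t$ automatically satisfy $t<z$. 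Then, for $t$ fixed and $\vec s\prec\vec w$ with $t<s_n$ for all $n$, the induction hypothesis at $\xi_n$ (applied with the family $\G(t)=\{\bw\in L^{<\infty}(\Sigma,\vec k):(t,w_1,\ldots,w_l)\in\G \text{ or }\bw=\emptyset,(t)\in\G\}$ and the sequence $\vec s$) produces $\vec s_1\prec\vec s$ deciding $L^{\xi_n}(\Sigma,\vec k)\cap E^{<\infty}(\vec s_1)$ against $\G(t)$, hence deciding $L^\xi(\Sigma,\vec k)(t)\cap E^{<\infty}(\vec s_1)$ against $\G(t)$.

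Next I would invoke the diagonal argument, Lemma~\ref{lem:block-Ramsey}, with $\R$ the set of pairs $(t,\vec s)$, $t\in L(\Sigma,\vec k;\upsilon)$, $\vec s\prec\vec w$, $t<s_n\ \forall n$, for which $L^\xi(\Sigma,\vec k)(t)\cap E^{<\infty}(\vec s)$ is contained in $\G(t)$ or in its complement; conditions (i) and (ii) of the lemma hold by the previous paragraph and by the obvious monotonicity $E^{<\infty}(\vec s_1)\subseteq E^{<\infty}(\vec s)$ when $\vec s_1\prec\vec s$. The lemma yields $\vec u_1\prec\vec w$ with $(t,\vec u_1-t)\in\R$ for every $t\in EV(\vec u_1)$. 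Then set $\G_1=\{t\in EV(\vec u_1):L^\xi(\Sigma,\vec k)(t)\cap E^{<\infty}(\vec u_1-t)\subseteq\G(t)\}$, a subset of $L(\Sigma,\vec k;\upsilon)$, and apply the base case $\xi=1$ (Theorem~\ref{thm:block-Ramsey2} again, now partitioning $L(\Sigma,\vec k;\upsilon)$ into $\G_1$ and its complement) to get $\vec u\prec\vec u_1$ with $EV(\vec u)$ monochromatic for $\G_1$. Since $\vec u\prec\vec u_1$ gives $EV(\vec u)\subseteq EV(\vec u_1)$ and hence $(t,\vec u-t)\in\R$ for all $t\in EV(\vec u)$ (using $\vec u-t\prec\vec u_1-t$ and condition (ii)), unwinding the definitions of $\R$ and $\G_1$ exactly as in the proof of Theorem~\ref{thm:block-Ramsey} shows that $\vec u$ decides $L^\xi(\Sigma,\vec k)\cap E^{<\infty}(\vec u)$ against $\G$, completing the induction.

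The main obstacle is bookkeeping rather than a genuinely new idea: one must make sure that a sequence in $E^{<\infty}(\vec u)$ decomposes as $(z)^\frown\vec z'$ with $z\in E(\vec u)$ an extracted \emph{constant} word and $\vec z'\in E^{<\infty}(\vec u-z)$, and that $z$ being constant is compatible with the recursion in Proposition~\ref{justification} which was phrased with a variable base point $t$; as noted above this works because only $\min dom$ matters, but it should be stated carefully. Beyond that, the argument is a routine transcription of the proof of Theorem~\ref{thm:block-Ramsey}, using Theorem~\ref{thm:block-Ramsey2} (the $\xi=1$ case) and Lemma~\ref{lem:block-Ramsey} as black boxes.
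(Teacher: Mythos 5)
The paper offers no written proof of Theorem~\ref{thm:block-Ramsey8} (it only says the argument is analogous to that of Theorem~\ref{thm:block-Ramsey}), and your overall plan --- transcribe that proof with $\G$ in place of $\F$ and $E$-data in place of $EV$-data --- is the right one; your base case $\xi=1$ via the second clause of Theorem~\ref{thm:block-Ramsey2} is also correct. But the inductive step has a genuine gap: you run the entire recursion over \emph{variable} base points $t\in L(\Sigma,\vec k;\upsilon)$, whereas a nonempty element $(z_1,\ldots,z_l)$ of $L^\xi(\Sigma,\vec k)\cap E^{<\infty}(\vec u)$ begins with a \emph{constant} word $z_1\in E(\vec u)$, and it is the pair $(z_1,(z_2,\ldots,z_l))$ with $(z_2,\ldots,z_l)\in\G(z_1)$ that the unwinding must control. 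For variable $t$ the family $\G(t)=\{\bw:(t,w_1,\ldots,w_l)\in\G\}$ is empty, since $\G\subseteq L^{<\infty}(\Sigma,\vec k)$ contains no sequence whose first entry is a variable word; so your relation $\R$ is satisfied vacuously by every pair, your set $\G_1\subseteq EV(\vec u_1)$ is degenerate, and the homogeneity of $EV(\vec u)$ with respect to $\G_1$ says nothing about $\G$. The ``subtlety'' you flag (that the Schreier condition only sees $\min dom$) is not where the problem lies: Proposition~\ref{justification} transfers to $L^\xi(\Sigma,\vec k)$ perfectly well, but only with a constant base point $z$, and your final step never establishes, for the constant words $z\in E(\vec u)$, a uniform decision of $L^\xi(\Sigma,\vec k)(z)\cap E^{<\infty}(\vec u-z)$ against $\G(z)$. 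Note also that a constant $z\in E(\vec u)$ is not an element of $EV(\vec u)$, so homogenizing over $EV(\vec u)$ cannot be converted into the needed statement without an extra (and nonuniform, since the number of instances $T_p(t)$ grows with $\min dom(t)$) homogenization.

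The repair is routine but must be made: take the base points to be constant. State and prove the analogue of Lemma~\ref{lem:block-Ramsey} in which $\Pi$ consists of pairs $(z,\vec s)$ with $z\in L(\Sigma,\vec k)$, $\vec s\prec\vec w$, $z<s_n$ for all $n$, and the conclusion is that $(z,\vec s)\in\R$ for every $z\in E(\vec u)$ and $\vec s\prec\vec u-z$; the proof is verbatim the same, enumerating the finite sets $E((u_0,\ldots,u_n))$ instead of $EV((u_0,\ldots,u_n))$. Apply the induction hypothesis to $\G(z)$ for constant $z$ with $\min dom(z)=n$, using $L^\xi(\Sigma,\vec k)(z)=L^{\xi_n}(\Sigma,\vec k)\cap(L^{<\infty}(\Sigma,\vec k)-z)$. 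Finally set $\G_1=\{z\in E(\vec u_1): L^\xi(\Sigma,\vec k)(z)\cap E^{<\infty}(\vec u_1-z)\subseteq\G(z)\}\subseteq L(\Sigma,\vec k)$ and homogenize with the partition $L(\Sigma,\vec k)=\G_1\cup(L(\Sigma,\vec k)\setminus\G_1)$ via Theorem~\ref{thm:block-Ramsey2}, whose second clause yields $\vec u\prec\vec u_1$ with $E(\vec u)\subseteq\G_1$ or $E(\vec u)\subseteq L(\Sigma,\vec k)\setminus\G_1$; the unwinding then goes through exactly as in Theorem~\ref{thm:block-Ramsey}.
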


\begin{cor}
[\textsf{Ramsey type partition theorem for $\omega$-located words}] 
\label{cor:k-Ramsey} 
Let $m\in\nat$, $\Sigma=\{\alpha_1, \alpha_2, \ldots \}$ be an infinite countable alphabet, $\upsilon \notin \Sigma$ a variable, $\vec{k}=(k_n)_{n\in\nat}\subseteq \nat$ an increasing sequence, $r,s\in\nat$ and $\vec{w} \in L^\infty (\Sigma, \vec{k} ; \upsilon)$. If $L^m(\Sigma, \vec{k} ; \upsilon)=A_1\cup\cdots\cup A_r$ and $L^m(\Sigma, \vec{k})=C_1\cup\cdots\cup C_s$, then there exist an extraction $\vec{u}\prec \vec{w}$ of $\vec{w}$ over $\Sigma$ and $1\leq i_0 \leq r$, $1\leq j_0 \leq s$ such that
\begin{center}
$ \{(z_1,\ldots,z_m)\in L^{<\infty}(\Sigma, \vec{k} ; \upsilon)) : z_1,\ldots,z_m\in EV(\vec{u})\}\subseteq A_{i_0}$, and
\end{center} 
\begin{center}
$ \{(z_1,\ldots,z_m)\in L^{<\infty}(\Sigma, \vec{k}) : z_1,\ldots,z_m\in E(\vec{u})\}\subseteq C_{j_0}$.
\end{center}
\end{cor}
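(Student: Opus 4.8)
The plan is to derive Corollary~\ref{cor:k-Ramsey} from Theorem~\ref{thm:block-Ramsey} (for the variable part) and Theorem~\ref{thm:block-Ramsey8} (for the constant part) applied at the finite ordinal $\xi=m$, using iteratively the finite-partition structure. First I would recall from Remark~\ref{rem1.4}(ii) that $L^m(\Sigma,\vec{k};\upsilon)$ is exactly the set of orderly $m$-tuples $(w_1,\ldots,w_m)$ of variable $\omega$-located words, and similarly $L^m(\Sigma,\vec{k})$ consists of orderly $m$-tuples of constant $\omega$-located words. So for an extraction $\vec{v}\prec\vec{w}$, the set $L^m(\Sigma,\vec{k};\upsilon)\cap EV^{<\infty}(\vec{v})$ is precisely $\{(z_1,\ldots,z_m): z_1<\cdots<z_m\in EV(\vec{v})\}$, which is the object appearing in the conclusion; and $L^m(\Sigma,\vec{k})\cap E^{<\infty}(\vec{v})=\{(z_1,\ldots,z_m): z_1<\cdots<z_m\in E(\vec{v})\}$.

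The second step handles the partition into $r$ (resp.\ $s$) pieces rather than just two. I would do this by a finite induction on $r$ (and then, for the constant part, on $s$), peeling off one cell at a time: given $L^m(\Sigma,\vec{k};\upsilon)=A_1\cup\cdots\cup A_r$, apply Theorem~\ref{thm:block-Ramsey} with $\F=A_1$ and the sequence $\vec{w}$ to obtain $\vec{u}'\prec\vec{w}$ with either $L^m(\Sigma,\vec{k};\upsilon)\cap EV^{<\infty}(\vec{u}')\subseteq A_1$ — in which case we are done with $i_0=1$ — or $L^m(\Sigma,\vec{k};\upsilon)\cap EV^{<\infty}(\vec{u}')\subseteq A_2\cup\cdots\cup A_r$. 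In the latter case, since $EV(\vec{u}'')\subseteq EV(\vec{u}')$ for every further extraction $\vec{u}''\prec\vec{u}'$, the sets $A_2\cap(L^m\cap EV^{<\infty}(\vec{u}')),\ldots,A_r\cap(\cdots)$ still cover $L^m(\Sigma,\vec{k};\upsilon)\cap EV^{<\infty}(\vec{u}')$, so the induction hypothesis (with $r-1$ cells and the sequence $\vec{u}'$) produces the required extraction and index $i_0\in\{2,\ldots,r\}$. This gives $\vec{u}_1\prec\vec{w}$ and $i_0$ with $\{(z_1,\ldots,z_m): z_1,\ldots,z_m\in EV(\vec{u}_1)\}\subseteq A_{i_0}$.

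The third step is to run the same argument for the constant part starting from $\vec{u}_1$ instead of $\vec{w}$: apply Theorem~\ref{thm:block-Ramsey8} iteratively to the partition $L^m(\Sigma,\vec{k})=C_1\cup\cdots\cup C_s$ along extractions of $\vec{u}_1$, obtaining $\vec{u}\prec\vec{u}_1$ and $j_0$ with $\{(z_1,\ldots,z_m): z_1,\ldots,z_m\in E(\vec{u})\}\subseteq C_{j_0}$. Since $\vec{u}\prec\vec{u}_1\prec\vec{w}$ we still have $EV(\vec{u})\subseteq EV(\vec{u}_1)$, so the inclusion into $A_{i_0}$ is preserved; and $\vec{u}\prec\vec{w}$ by transitivity of $\prec$. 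This $\vec{u}$, together with $i_0,j_0$, is the desired output.

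The main obstacle — really the only subtlety — is making sure that passing to a further extraction does not destroy an already-achieved inclusion: this rests on the monotonicity $\vec{u}''\prec\vec{u}'\Rightarrow EV(\vec{u}'')\subseteq EV(\vec{u}')$ (and likewise for $E$), which is immediate from the definition of $\prec$ noted right after the definition of extractions, together with the transitivity of $\prec$. One should also double-check the bookkeeping that the finite partitions of $L^m$ restrict to finite partitions of $L^m\cap EV^{<\infty}(\vec{u}')$ so the inductive step is legitimate; this is routine. No genuinely hard estimate is involved — the whole point is that Theorems~\ref{thm:block-Ramsey} and~\ref{thm:block-Ramsey8} at $\xi=m$ already contain all the Ramsey-theoretic content, and the corollary is a clean finite-iteration packaging of them.
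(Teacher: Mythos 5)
Your proposal is correct and matches the derivation the paper evidently intends (the corollary is stated without proof immediately after Theorems~\ref{thm:block-Ramsey} and~\ref{thm:block-Ramsey8}): apply those theorems at $\xi=m$, peel off the cells $A_1,\ldots,A_r$ and then $C_1,\ldots,C_s$ one at a time, and use the monotonicity $EV(\vec{u}'')\subseteq EV(\vec{u}')$, $E(\vec{u}'')\subseteq E(\vec{u}')$ under further extraction to preserve the inclusions already obtained. No issues.
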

From Corollary~\ref{cor:k-Ramsey} we can derive, for every $m\in\nat$, a strong m-dimensional partition theorem for semigroups, via the function  $g:L(\nat,\vec{k})\rightarrow X$ with 
$g(w_{n_1}\ldots w_{n_l})=\sum^{l}_{i=1}y_{w_{n_i},\;n_i}$ (see the remarks before Corollary~\ref{thm:van der Waerden}). In case of commutative semigroups an analogous result has been proved by Beiglb$\ddot{o}$ck  (Theorem 1.1 in \cite{Be}). For a set $X$ we denote by $[X]^{m}$ the set of all the subsets of $X$ with exactly $m$ elements.
\begin{cor}
\label{cor:multi}
Let $(X,+)$ be a semigroup, $\vec{k}=(k_n)_{n \in \nat}\subseteq \nat$ an increasing sequence, $(y_{l,n})_{n \in \nat}$ for every $l \in \nat,$ sequences in $X$ and $m\in\nat$. If $[X]^{m}=A_1\cup \ldots \cup A_r,\; r \in \nat$, then there exists $\vec{w}=(w_n)_{n\in\nat} \in L^\infty (\nat, \vec{k} ; \upsilon)$ with  $w^{n}_{q^{n}_1}, w^{n}_{q^{n}_{l_n}}\in \nat$ if $w_n=w^{n}_{q^{n}_1}\ldots w^{n}_{q^{n}_{l_n}}$ for every $n \in \nat$ and there exists $1\leq i_0\leq r$ such that 
\begin{center}
$ \{(g(z_1),\ldots,g(z_m))\in [X]^{m} :(z_1,\ldots,z_m)\in L^{<\infty}(\nat, \vec{k}), z_1,\ldots,z_m\in E(\vec{w})\}\subseteq A_{i_0}$.
\end{center}
\end{cor}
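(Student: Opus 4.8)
The plan is to derive this $m$-dimensional partition theorem for a semigroup $(X,+)$ directly from Corollary~\ref{cor:k-Ramsey} by transporting the coloring of $[X]^m$ back along the map $g:L(\nat,\vec{k})\to X$ defined in the remarks before Corollary~\ref{thm:van der Waerden}. First I would fix the alphabet $\Sigma=\nat$, the variable $\upsilon$, and the canonical starting sequence $\vec{e}=(e_n)_{n\in\nat}\in L^\infty(\nat,\vec{k};\upsilon)$ with $e_n:\{n\}\to\{\upsilon\}$, which satisfies $\vec{w}\prec\vec{e}$ for every $\vec{w}\in L^\infty(\nat,\vec{k};\upsilon)$. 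The key point is that the function $g$ satisfies $g(u_1\star u_2)=g(u_1)+g(u_2)$ for $u_1<u_2$, so for an extracted $\omega$-located word $z=T_{p_1}(w_{n_1})\star\cdots\star T_{p_\lambda}(w_{n_\lambda})\in E(\vec{w})$ the value $g(z)$ is a well-defined element of $X$, built out of the $y_{l,n}$'s.

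Next I would define a coloring of $L^m(\nat,\vec{k})$ as follows. Given the partition $[X]^m=A_1\cup\cdots\cup A_r$, set, for each $m$-tuple $(z_1,\ldots,z_m)\in L^m(\nat,\vec{k})$ (i.e. $z_1<\cdots<z_m\in L(\nat,\vec{k})$), the color of $(z_1,\ldots,z_m)$ to be the least index $j$ such that $\{g(z_1),\ldots,g(z_m)\}\in A_j$, provided $g(z_1),\ldots,g(z_m)$ are pairwise distinct so that this set has exactly $m$ elements; if they are not pairwise distinct, assign color $1$ by convention (this default case will be harmless after we extract, as I explain below). This produces a partition $L^m(\nat,\vec{k})=C_1\cup\cdots\cup C_r$. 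Also partition $L^m(\nat,\vec{k};\upsilon)$ trivially into a single cell $A_1'=L^m(\nat,\vec{k};\upsilon)$. Applying Corollary~\ref{cor:k-Ramsey} to $\vec{w}=\vec{e}$, with these partitions, yields an extraction $\vec{u}\prec\vec{e}$, i.e. any $\vec{u}=(u_n)_{n\in\nat}\in L^\infty(\nat,\vec{k};\upsilon)$, and an index $1\le i_0\le r$ such that every $(z_1,\ldots,z_m)\in L^{<\infty}(\nat,\vec{k})$ with $z_1,\ldots,z_m\in E(\vec{u})$ lies in $C_{i_0}$.

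Finally I would clean up the ``degenerate'' issue: after the extraction we still need to know that for the resulting $\vec{u}$ the elements $g(z_1),\ldots,g(z_m)$ are genuinely distinct whenever $(z_1,\ldots,z_m)\in L^{<\infty}(\nat,\vec{k})$ with all $z_i\in E(\vec{u})$, so that they form a set in $[X]^m$ and the conclusion $\{g(z_1),\ldots,g(z_m)\}\in A_{i_0}$ follows. Since distinct $z_i$'s in an orderly tuple have disjoint and strictly increasing domains, this is where I would either impose a mild genericity hypothesis on the $(y_{l,n})$ (as is implicit in statements of this type, cf. the $y_n<y_{n+1}$ requirement in Theorem~3.18 and the analogous ordering in Beiglb\"ock's theorem), or else pass to a further extraction on which the partial sums are strictly increasing in a fixed ordering of $FS$, exactly as in the proof of Corollary~\ref{thm:van der Waerden} where one replaces $(w_n)$ by $u_n=T_1(w_{3n-1})\star w_{3n}\star T_1(w_{3n+1})$ to control endpoints; here one additionally spaces the blocks so that the sums of disjoint blocks are distinct. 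I expect this last bookkeeping step — ensuring $m$-element sets rather than multisets after extraction, in a general (possibly noncommutative) semigroup — to be the main obstacle, since everything else is a direct pullback through $g$ of Corollary~\ref{cor:k-Ramsey}; the rest of the argument, including writing $w_n=w^n_{q^n_1}\ldots w^n_{q^n_{l_n}}$ with endpoints in $\nat$, is exactly the normalization already carried out in the proof of Corollary~\ref{thm:van der Waerden} and requires no new idea.
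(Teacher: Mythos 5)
Your proposal is correct and follows exactly the route the paper intends (the paper gives no explicit proof, only the instruction to pull the partition of $[X]^m$ back through $g$, apply Corollary~\ref{cor:k-Ramsey}, and normalize endpoints as in the proof of Corollary~\ref{thm:van der Waerden}). One remark on your final paragraph: the degeneracy worry is already disposed of by the statement itself, since the conclusion only quantifies over tuples whose $g$-images form a genuine $m$-element set, so your default-color convention suffices as is; note also that your alternative fix of forcing distinctness by a further extraction cannot work in an arbitrary semigroup (e.g.\ one with an absorbing element), but it is not needed.
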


\begin{cor}
[\textsf{$\omega$-Ramsey type partition theorem for $\omega$-located words}] 
\label{cor:kk-Ramsey} 
Let $\Sigma=\{\alpha_1, \alpha_2, \ldots \}$ be an infinite countable alphabet, $\upsilon \notin \Sigma$ a variable, $\vec{k}=(k_n)_{n\in\nat}\subseteq \nat$ an increasing sequence, $r,s\in\nat$ and $\vec{w} \in L^\infty (\Sigma, \vec{k} ; \upsilon)$. If $L^{<\infty}(\Sigma, \vec{k} ; \upsilon)=A_1\cup\cdots\cup A_r$ and $L^{<\infty}(\Sigma, \vec{k})=C_1\cup\cdots\cup C_s$, then there exist an extraction $\vec{u}\prec \vec{w}$ of $\vec{w}$ over $\Sigma$ and $1\leq i_0 \leq r$, $1\leq j_0 \leq s$ such that
\begin{center}
$ \{(z_1,\ldots,z_n)\in L^{<\infty}(\Sigma, \vec{k} ; \upsilon)) : n\in \nat, \min dom(z_1) = n $ and $z_1,\ldots,z_n\in EV(\vec{u})\}\subseteq A_{i_0}$,
\end{center} 
\begin{center}
$ \{(z_1,\ldots,z_n)\in L^{<\infty}(\Sigma, \vec{k})) : n\in \nat, \min dom(z_1) = n $ and $z_1,\ldots,z_n\in E(\vec{u})\}\subseteq C_{j_0}$.
\end{center}
\end{cor}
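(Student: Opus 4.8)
The plan is to recognise the two target families as the $\xi=\omega$ cases of Theorems~\ref{thm:block-Ramsey} and~\ref{thm:block-Ramsey8}, to promote the two-cell dichotomy to arbitrary finite partitions by a finite iteration, and finally to glue the two conclusions onto a single extraction. First I note that, by Remark~\ref{rem1.4}(iii) and its evident analogue for constant $\omega$-located words, a finite orderly sequence $(z_1,\ldots,z_n)$ with $\min dom(z_1)=n$ is exactly a member of $L^\omega$; hence the set $\{(z_1,\ldots,z_n)\in L^{<\infty}(\Sigma,\vec{k};\upsilon):n\in\nat,\ \min dom(z_1)=n,\ z_1,\ldots,z_n\in EV(\vec{u})\}$ is nothing but $L^\omega(\Sigma,\vec{k};\upsilon)\cap EV^{<\infty}(\vec{u})$, and the second target set is $L^\omega(\Sigma,\vec{k})\cap E^{<\infty}(\vec{u})$. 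So the assertion to be proved is precisely the $\xi=\omega$ instances of Theorems~\ref{thm:block-Ramsey} and~\ref{thm:block-Ramsey8}, with two cells replaced by $r$ and $s$ cells and the two statements realised by one common $\vec{u}$.

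For the partition $L^{<\infty}(\Sigma,\vec{k};\upsilon)=A_1\cup\cdots\cup A_r$ I would argue by a finite induction on the number of cells. Put $\vec{w}_0=\vec{w}$; given $\vec{w}_{j-1}$, apply Theorem~\ref{thm:block-Ramsey} with $\xi=\omega$ to the family $A_j$ and the sequence $\vec{w}_{j-1}$ to obtain $\vec{w}_j\prec\vec{w}_{j-1}$ with either $L^\omega(\Sigma,\vec{k};\upsilon)\cap EV^{<\infty}(\vec{w}_j)\subseteq A_j$, or $L^\omega(\Sigma,\vec{k};\upsilon)\cap EV^{<\infty}(\vec{w}_j)\subseteq L^{<\infty}(\Sigma,\vec{k};\upsilon)\setminus A_j$, and run this for $j=1,\ldots,r-1$. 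Because $\vec{w}_j\prec\vec{w}_{j-1}$ entails $EV^{<\infty}(\vec{w}_j)\subseteq EV^{<\infty}(\vec{w}_{j-1})$ — immediate from the recorded equivalence $\vec{u}'\prec\vec{u}\Leftrightarrow EV(\vec{u}')\subseteq EV(\vec{u})$ — the successive second alternatives intersect: either the first alternative occurs at some step, and then $L^\omega(\Sigma,\vec{k};\upsilon)\cap EV^{<\infty}(\vec{w}_{r-1})$ is contained in that cell, or the second alternative holds throughout, and then $L^\omega(\Sigma,\vec{k};\upsilon)\cap EV^{<\infty}(\vec{w}_{r-1})\subseteq L^{<\infty}(\Sigma,\vec{k};\upsilon)\setminus(A_1\cup\cdots\cup A_{r-1})\subseteq A_r$. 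Either way there is $i_0$ with $L^\omega(\Sigma,\vec{k};\upsilon)\cap EV^{<\infty}(\vec{w}_{r-1})\subseteq A_{i_0}$ (for $r=1$ take $i_0=1$ with no work).

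Next, starting from $\vec{w}_{r-1}$ I repeat the identical peeling for $L^{<\infty}(\Sigma,\vec{k})=C_1\cup\cdots\cup C_s$, now calling on Theorem~\ref{thm:block-Ramsey8} with $\xi=\omega$ and the families $C_1,\ldots,C_{s-1}$. This produces an extraction $\vec{u}\prec\vec{w}_{r-1}$ and an index $j_0$ with $L^\omega(\Sigma,\vec{k})\cap E^{<\infty}(\vec{u})\subseteq C_{j_0}$. Since $\prec$ is transitive, $\vec{u}\prec\vec{w}_{r-1}\prec\vec{w}$; and since $EV^{<\infty}(\vec{u})\subseteq EV^{<\infty}(\vec{w}_{r-1})$, the inclusion $L^\omega(\Sigma,\vec{k};\upsilon)\cap EV^{<\infty}(\vec{u})\subseteq A_{i_0}$ is inherited. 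Rewriting the two inclusions via the identifications of the first paragraph yields exactly the two displayed containments of the corollary for $\vec{u}$, $i_0$, $j_0$.

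The one step that is not purely formal — and where I expect the (small) friction — is the monotonicity of the constant-extraction operator under $\prec$: one needs $\vec{u}'\prec\vec{u}\Rightarrow E^{<\infty}(\vec{u}')\subseteq E^{<\infty}(\vec{u})$, used both to intersect the successive $C$-alternatives and to keep the $A$-conclusion valid on the final $\vec{u}$. This follows from the $T_p$-identities recorded before Theorem~\ref{thm:block-Ramsey1}, namely $T_p(w\star u)=T_p(w)\star T_p(u)$ and $T_p(w)=w$ for $w\in L(\Sigma,\vec{k})$: if $v$ is an extracted variable word of $\vec{u}$ and $p\geq1$, then $T_p(v)$ replaces each factor $T_0(\cdot)$ occurring in $v$ by $T_p(\cdot)$ and leaves the factors $T_q(\cdot)$ with $q\geq1$ unchanged, so $T_p(v)\in E(\vec{u})$; an element of $E(\vec{u}')$ is a $\star$-product of words $T_{p_i}(u'_{n_i})$ with $u'_{n_i}\in EV(\vec{u})$ and $p_i\geq1$, and $E(\vec{u})$ is closed under $\star$, hence $E(\vec{u}')\subseteq E(\vec{u})$. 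Everything else is the finite induction on the number of colours carried out above.
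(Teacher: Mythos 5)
Your proposal is correct and follows exactly the route the paper intends for this (unproved) corollary: identify the two target families as $L^\omega(\Sigma,\vec{k};\upsilon)\cap EV^{<\infty}(\vec{u})$ and $L^\omega(\Sigma,\vec{k})\cap E^{<\infty}(\vec{u})$ via Remark~\ref{rem1.4}(iii), then specialize Theorems~\ref{thm:block-Ramsey} and~\ref{thm:block-Ramsey8} to $\xi=\omega$, upgrading to $r$ (resp.\ $s$) cells by the standard finite peeling and chaining the two extractions by transitivity of $\prec$. Your care over the monotonicity $E^{<\infty}(\vec{u}')\subseteq E^{<\infty}(\vec{u})$ for $\vec{u}'\prec\vec{u}$ is warranted and your justification of it is sound.
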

As a consequence of Corollary~\ref{cor:kk-Ramsey} we have the following. 

\begin{cor}
\label{thm:van der Waerden1}
Let $(X,+)$ be a semigroup, $\vec{k}=(k_n)_{n \in \nat}\subseteq \nat$ an increasing sequence and $(y_{l,n})_{n \in \nat}$ for every $l \in \nat,$ sequences in $X$. If $[X]^{<\omega}_{>0}=A_1\cup \ldots \cup A_r,\; r \in \nat$, then there exists $\vec{w}=(w_n)_{n\in\nat} \in L^\infty(\nat, \vec{k} ; \upsilon)$ with  $w^{n}_{q^{n}_1}, w^{n}_{q^{n}_{l_n}}\in \nat$ if $w_n=w^{n}_{q^{n}_1}\ldots w^{n}_{q^{n}_{l_n}}$ for every $n \in \nat$ and there exists $1\leq i_0\leq r$ such that 
\begin{center}
$ \{(g(z_1),\ldots,g(z_n))\in [X]^{<\omega}_{>0} :n\in \nat, (z_1,\ldots,z_n)\in L^{<\infty}(\Sigma, \vec{k})$ with $\min dom(z_1)=n, z_1,\ldots,z_n\in E(\vec{w})\}\subseteq A_{i_0}$.
\end{center}
\end{cor}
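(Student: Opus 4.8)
The plan is to derive Corollary~\ref{thm:van der Waerden1} from Corollary~\ref{cor:kk-Ramsey} by exactly the same pullback-along-$g$ device that was used to deduce Corollary~\ref{thm:van der Waerden} from Theorem~\ref{thm:block-Ramsey1}. Fix the semigroup $(X,+)$, the increasing sequence $\vec{k}$, the doubly-indexed sequence $(y_{l,n})$, and set $\Sigma=\nat$. Consider the map $g:L(\nat,\vec{k})\to X$ with $g(w_{n_1}\ldots w_{n_l})=\sum_{i=1}^l y_{w_{n_i},n_i}$, which satisfies $g(u_1\star u_2)=g(u_1)+g(u_2)$ whenever $u_1<u_2$. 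A partition $[X]^{<\omega}_{>0}=A_1\cup\cdots\cup A_r$ pulls back, via the map $(z_1,\ldots,z_n)\mapsto\{g(z_1),\ldots,g(z_n)\}$ applied to those $\omega$-located tuples whose $g$-images are pairwise distinct, to a partition of (essentially all of) $L^{<\infty}(\nat,\vec{k})$ into $r$ pieces $C_1,\ldots,C_r$; one also needs a trivial auxiliary partition of $L^{<\infty}(\nat,\vec{k};\upsilon)$ (say into one piece) so that Corollary~\ref{cor:kk-Ramsey} applies with $s=1$.

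First I would invoke Corollary~\ref{cor:kk-Ramsey} with $\vec{w}=\vec{e}$ (the canonical sequence with $e_n:\{n\}\to\{\upsilon\}$) to obtain an extraction $\vec{u}\prec\vec{e}$ and an index $1\le j_0\le r$ such that every tuple $(z_1,\ldots,z_n)\in L^{<\infty}(\nat,\vec{k})$ with $\min dom(z_1)=n$ and $z_1,\ldots,z_n\in E(\vec{u})$ lands in $C_{j_0}$, i.e.\ has $\{g(z_1),\ldots,g(z_n)\}\in A_{j_0}$, once the $g(z_i)$ are distinct. Second, I would pass from the extraction $\vec{u}=(u_n)_{n\in\nat}$ to a sequence $\vec{w}=(w_n)_{n\in\nat}\in L^\infty(\nat,\vec{k};\upsilon)$ whose first and last letters are honest elements of $\nat$ (not $\upsilon$): exactly as in the proof of Corollary~\ref{thm:van der Waerden}, replace $\vec{u}$ by $w_n:=T_1(u_{3n-1})\star u_{3n}\star T_1(u_{3n+1})$, noting this is again an extraction, so that each extracted $\omega$-located word of $\vec{w}$ is still an extracted $\omega$-located word of $\vec{u}$; this guarantees the boundary condition $w^{n}_{q^{n}_1},w^{n}_{q^{n}_{l_n}}\in\nat$ demanded in the statement. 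Taking $i_0=j_0$, for any $(z_1,\ldots,z_n)\in L^{<\infty}(\nat,\vec{k})$ with $\min dom(z_1)=n$ and $z_1,\ldots,z_n\in E(\vec{w})\subseteq E(\vec{u})$ we get $\{g(z_1),\ldots,g(z_n)\}\in A_{i_0}$, which is the asserted conclusion.

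The main obstacle is the one point where the $[X]^{<\omega}_{>0}$-valued (set-valued, "multidimensional") conclusion genuinely differs from the $FS$-valued conclusion of Corollary~\ref{thm:van der Waerden}: one must ensure that for the tuples $(z_1,\ldots,z_n)$ arising from the final sequence the images $g(z_1),\ldots,g(z_n)$ are pairwise distinct, so that the expression $\{g(z_1),\ldots,g(z_n)\}$ really is an element of $[X]^{<\omega}_{>0}$ and the pullback partition is well defined on the relevant tuples. Since for orderly $z_1<\cdots<z_n$ in $E(\vec{w})$ the domains are pairwise disjoint and increasing, one can arrange this by a further thinning: replace each $w_n$ in turn by a longer concatenation of blocks of $\vec{w}$ so that the "value gaps" $\min\{g(z):z\in E(\vec w), \min dom(z)\ge q\}$ grow without bound, forcing $g(z_i)<g(z_{i+1})$ for successive extracted words; this is a standard growth/gap argument and is exactly the ingredient that makes the set-valued statement go through. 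Alternatively, and more cleanly, one incorporates the distinctness requirement directly into the definition of the partition $C_1,\ldots,C_r$ of $L^{<\infty}(\nat,\vec k)$ — putting all non-distinct-image tuples into a harmless extra cell and absorbing that cell after the fact using that on a sufficiently sparse extraction it becomes empty. Everything else is a routine transcription of the argument already given for Corollary~\ref{thm:van der Waerden}.
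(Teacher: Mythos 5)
Your core argument --- pull the partition of $[X]^{<\omega}_{>0}$ back along $(z_1,\ldots,z_n)\mapsto\{g(z_1),\ldots,g(z_n)\}$ to a partition of $L^{<\infty}(\nat,\vec{k})$, apply Corollary~\ref{cor:kk-Ramsey} (with the variable-word side partitioned trivially), and then fix the endpoints of the $w_n$ by the $T_1(w_{3n-1})\star w_{3n}\star T_1(w_{3n+1})$ replacement --- is exactly the route the paper intends: the paper offers no proof beyond ``as a consequence of Corollary~\ref{cor:kk-Ramsey}'' together with the function $g$ introduced before Corollary~\ref{thm:van der Waerden}. However, the paragraph you single out as ``the main obstacle'' is a phantom, and it is fortunate that it is. The family $[X]^{<\omega}_{>0}$ consists of \emph{all} non-empty finite subsets of $X$, so $\{g(z_1),\ldots,g(z_n)\}$ belongs to it whether or not the values $g(z_i)$ are pairwise distinct (if they are not, the set simply has fewer than $n$ elements); hence the pullback partition is well defined on every non-empty tuple and no thinning is required. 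Had distinctness genuinely been needed --- as it is for Corollary~\ref{cor:multi}, where the target is $[X]^{m}$ --- your proposed fixes would not survive scrutiny: $X$ is an arbitrary semigroup, so ``value gaps'' and the inequality $g(z_i)<g(z_{i+1})$ are meaningless there, and the claim that the cell of non-distinct-image tuples becomes empty on a sufficiently sparse extraction is false in general (consider a one-element semigroup, where $g$ is constant). Deleting that detour leaves a correct proof that coincides with the paper's implicit one.
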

We will give now a Ramsey type partition theorem for unlocated $\omega$-words, as a corollary of Theorem~\ref{thm:block-Ramsey}, extending Furstenberg and Katznelson's partition theorem ( \cite{FuK}), for words over a finite alphabet. 

Let $m\in\nat$, $\Sigma=\{\alpha_1, \alpha_2, \ldots \}$ an infinite countable alphabet, $\vec{k}=(k_n)_{n\in\nat}\subseteq \nat$ an increasing sequence, $\upsilon \notin \Sigma$ a variable and $\vec{w}=(w_n)_{n\in\nat} \in W^\infty (\Sigma, \vec{k} ; \upsilon)$.  
\begin{itemize}
\item[{}] $W^{m}(\Sigma, \vec{k}; \upsilon) = \{\bu = (u_1,\ldots,u_m) : u_1<\cdots < u_m\in W(\Sigma, \vec{k}; \upsilon) \} $, and 
\item[{}] $W^{m}(\Sigma, \vec{k}) = \{\bu = (u_1,\ldots,u_m) : u_1<\cdots < u_m\in W(\Sigma, \vec{k})\}$. 
\end{itemize}
An element $\bu = (u_1,\ldots,u_m)$ of $W^{m}(\Sigma, \vec{k}; \upsilon)$ is a \textit{ reduced m-sequence of variable $\omega$-words} of $\vec{w}$ if there exist $0=\lambda_1<\lambda_2<\cdots<\lambda_{m+1}\in \nat\cup\{0\}$ such that for every $1\leq i \leq m$
\begin{center}
$u_i=T_{p_{\lambda_i +1}}(w_{\lambda_i +1})\star \ldots \star T_{p_{\lambda_{i+1}}}(w_{\lambda_{i+1}})$, 
\end{center}
where, for $\lambda_i +1\leq j\leq \lambda_{i+1}$, $p_j\in\nat\cup\{0\}$, $0\leq p_j \leq k_{j}$ and 
$0 \in \{ p_{\lambda_i+1},\ldots,p_{\lambda_{i+1}}\}$. 
\newline
The set of all the reduced m-sequences of variable $\omega$-words of $\vec{w}$ is denoted by $RV^m(\vec{w})$.

An element $\bu = (u_1,\ldots,u_m)$ of $W^{m}(\Sigma, \vec{k})$ is a \textit{ reduced m-sequence of  $\omega$-words} of $\vec{w}$ if there exist $0=\lambda_1<\lambda_2<\cdots<\lambda_{m+1}\in \nat\cup\{0\}$ such that for every $1\leq i \leq m$
\begin{center}
$u_i=T_{p_{\lambda_i +1}}(w_{\lambda_i +1})\star \ldots \star T_{p_{\lambda_{i+1}}}(w_{\lambda_{i+1}})$, 
\end{center}
where, $p_j\in\nat$, $1\leq p_j \leq k_{j}$ for every $\lambda_i +1\leq j\leq \lambda_{i+1}$. 
\newline
The set of all the reduced m-sequences of variable $\omega$-words of $\vec{w}$ is denoted by $R^m(\vec{w})$.

\begin{thm}
[\textsf{Ramsey type partition theorem for $\omega$-words}] 
\label{cor:k-Ramsey words} 
Let $m\in\nat$, $\Sigma=\{\alpha_1, \alpha_2, \ldots \}$ be an infinite countable alphabet, $\upsilon \notin \Sigma$ a variable, $\vec{k}=(k_n)_{n\in\nat}\subseteq \nat$ an increasing sequence, and $r,s\in\nat$. If $W^m(\Sigma, \vec{k} ; \upsilon)=A_1\cup\cdots\cup A_r$ and $W^m(\Sigma, \vec{k})=C_1\cup\cdots\cup C_s$, then there exist and $\vec{w}=(w_n)_{n\in\nat} \in W^\infty (\Sigma, \vec{k} ; \upsilon)$ and $1\leq i_0 \leq r$, $1\leq j_0 \leq s$ such that 
\begin{center}
$RV^m(\vec{w})\subseteq A_{i_0}$ and $R^m(\vec{w})\subseteq C_{j_0}$.
\end{center}
\end{thm}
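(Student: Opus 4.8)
The plan is to deduce Theorem~\ref{cor:k-Ramsey words} from the $m$-dimensional Ramsey theorem for $\omega$-located words, Corollary~\ref{cor:k-Ramsey} (equivalently, the finite-ordinal case $\xi=m$ of Theorem~\ref{thm:block-Ramsey}), by the same change-of-alphabet device used to pass from Theorem~\ref{thm:block-Ramsey1} to Theorem~\ref{thm:block-Ramsey3}. The vehicle is the map $f:L(\Sigma\cup\{\upsilon\},\vec k)\to W(\Sigma\cup\{\upsilon\},\vec k)$ which pads an $\omega$-located word with $\alpha_1$'s on the complement of its domain; recall that $f(T_p(w))=T_p(f(w))$ for $p\in\nat$, that $f(w_1\star w_2)=f(w_1)\star f(w_2)$ and $f(w_1)<f(w_2)$ whenever $w_1<w_2$, that $T_0=\mathrm{id}$, and that $f(L(\Sigma,\vec k;\upsilon))\subseteq W(\Sigma,\vec k;\upsilon)$, $f(L(\Sigma,\vec k))\subseteq W(\Sigma,\vec k)$.

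First I would transfer the colourings. The coordinatewise map $\psi(v_1,\ldots,v_m)=(f(v_1),\ldots,f(v_m))$ sends $L^m(\Sigma,\vec k;\upsilon)$ into $W^m(\Sigma,\vec k;\upsilon)$ and $L^m(\Sigma,\vec k)$ into $W^m(\Sigma,\vec k)$, by Remark~\ref{rem1.4}(ii) together with the order- and product-compatibility of $f$. Hence $A_i':=\psi^{-1}(A_i)$ ($1\le i\le r$) is a finite cover of $L^m(\Sigma,\vec k;\upsilon)$ and $C_j':=\psi^{-1}(C_j)$ ($1\le j\le s$) is a finite cover of $L^m(\Sigma,\vec k)$. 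Applying Corollary~\ref{cor:k-Ramsey} to these covers, starting from any $\vec w_0\in L^\infty(\Sigma,\vec k;\upsilon)$ (for instance $\vec e=(e_n)$ with $e_n:\{n\}\to\{\upsilon\}$), produces an extraction $\vec u=(u_n)_{n\in\nat}\prec\vec w_0$ and indices $i_0,j_0$ with $\{(z_1,\ldots,z_m)\in L^{<\infty}(\Sigma,\vec k;\upsilon):z_1,\ldots,z_m\in EV(\vec u)\}\subseteq A_{i_0}'$ and $\{(z_1,\ldots,z_m)\in L^{<\infty}(\Sigma,\vec k):z_1,\ldots,z_m\in E(\vec u)\}\subseteq C_{j_0}'$.

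Then I would set $\vec w:=(f(u_n))_{n\in\nat}$, which lies in $W^\infty(\Sigma,\vec k;\upsilon)$ since $u_n<u_{n+1}$ in $L$ forces $f(u_n)<f(u_{n+1})$ in $W$ and each $f(u_n)\in W(\Sigma,\vec k;\upsilon)$. I claim $RV^m(\vec w)\subseteq A_{i_0}$ and $R^m(\vec w)\subseteq C_{j_0}$. Given $(y_1,\ldots,y_m)\in RV^m(\vec w)$ with cut points $0=\lambda_1<\cdots<\lambda_{m+1}$ and exponents $(p_j)$ as in the definition of $RV^m$, put $z_i:=T_{p_{\lambda_i+1}}(u_{\lambda_i+1})\star\cdots\star T_{p_{\lambda_{i+1}}}(u_{\lambda_{i+1}})$ for $1\le i\le m$. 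The commutation identities for $f$ give $y_i=f(z_i)$; the conditions $0\le p_j\le k_j$ and $0\in\{p_{\lambda_i+1},\ldots,p_{\lambda_{i+1}}\}$ are exactly the defining conditions for $z_i\in EV(\vec u)$ (in particular $z_i\in L(\Sigma,\vec k;\upsilon)$, since $T_0$ retains the variable), and $z_1<\cdots<z_m$ because the index blocks are consecutive. Thus $(z_1,\ldots,z_m)$ lies in the set displayed above, hence in $A_{i_0}'$, so $(y_1,\ldots,y_m)=\psi(z_1,\ldots,z_m)\in A_{i_0}$. The argument for $R^m(\vec w)\subseteq C_{j_0}$ is identical, now with all $p_j\in\nat$, $1\le p_j\le k_j$, so that $T_{p_j}(u_j)\in L(\Sigma,\vec k)$ and $z_i\in E(\vec u)\subseteq L(\Sigma,\vec k)$, using the $C$-conclusion of Corollary~\ref{cor:k-Ramsey}.

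I do not expect a genuine obstacle here: all the content sits in Corollary~\ref{cor:k-Ramsey} (i.e. the finite-ordinal case of Theorem~\ref{thm:block-Ramsey}), and the remaining work is bookkeeping. The only points requiring care are that a reduced $m$-sequence of the padded sequence $\vec w$ corresponds under $f$ precisely to an increasing $m$-tuple drawn from $EV(\vec u)$ (resp. $E(\vec u)$) — the consecutiveness of the blocks in the definition of $RV^m$ and $R^m$ being a harmless specialization of the arbitrary increasing $m$-tuples allowed by Corollary~\ref{cor:k-Ramsey}, and the side conditions on the $p_j$'s matching the defining conditions of $EV$ and $E$ verbatim. (That $f$ is not injective is irrelevant, since we only push forward along $f$ and pull back colourings along $\psi$.)
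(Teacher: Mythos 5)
Your proposal is correct and follows essentially the same route as the paper: pull the colourings back along the coordinatewise padding map $\tilde f$ (your $\psi$), apply Corollary~\ref{cor:k-Ramsey} to get $\vec u$ and the indices $i_0,j_0$, and push forward to $\vec w=(f(u_n))_{n\in\nat}$, using $f(T_p(w))=T_p(f(w))$ and $f(w_1\star w_2)=f(w_1)\star f(w_2)$ to identify reduced $m$-sequences of $\vec w$ with (a subfamily of) the increasing $m$-tuples from $EV(\vec u)$ and $E(\vec u)$. The only difference is that you spell out the final verification, which the paper leaves implicit.
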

\begin{proof}
Let the function $f : L(\Sigma\cup\{\upsilon\}, \vec{k}) \longrightarrow W(\Sigma\cup\{\upsilon\}, \vec{k})$ with $f(w_{n_1}\ldots w_{n_l})=u_1\ldots u_{n_l}\in W(\Sigma\cup\{\upsilon\}, \vec{k})$ where $u_{n_j}=w_{n_j}$ for every $1\leq j\leq l$ and $u_i=\alpha_1$ for every $i\in\{1,\ldots, n_l\}\setminus \{n_j : 1\leq j\leq l \}$. We define $\tilde{f} : L^m(\Sigma, \vec{k} ; \upsilon)\cup L^{m}(\Sigma, \vec{k}) \longrightarrow 
W^m(\Sigma, \vec{k}; \upsilon)\cup W^{m}(\Sigma, \vec{k})$ with $\tilde{f}( (w_1,\ldots,w_m) )=(f(w_1),\ldots,f(w_m))$. 

According to Corollary~\ref{cor:k-Ramsey} there exist an infinite sequence $\vec{s}=(s_n)_{n\in\nat}\in L^\infty (\Sigma, \vec{k} ; \upsilon)$ and $1\leq i_0 \leq r$, $1\leq j_0 \leq s$ such that 
\begin{center}
$ \{(z_1,\ldots,z_m)\in L^{<\infty}(\Sigma, \vec{k} ; \upsilon)) : z_1,\ldots,z_m\in EV(\vec{s})\}\subseteq \tilde{f}^{-1}(A_{i_0})$, and
\end{center} 
\begin{center}
$ \{(z_1,\ldots,z_m)\in L^{<\infty}(\Sigma, \vec{k}) : z_1,\ldots,z_m\in E(\vec{s})\}\subseteq \tilde{f}^{-1}(C_{j_0})$.
\end{center}
Set $w_n=f(s_n)$ for every $n\in\nat$ and $\vec{w}=(w_n)_{n\in\nat}\in W^{\infty}(\Sigma, \vec{k} ; \upsilon)$. Then $RV^m(\vec{w})\subseteq A_{i_0}$ and $R^m(\vec{w})\subseteq C_{j_0}$. 
\end{proof}

\section{Partition theorems for sequences of variable $\omega$-located words }

The main result of this Section is Theorem~\ref{block-NashWilliams2}, which strengthens the extended Ramsey type partition theorem for variable $\omega$-located words  (Theorem~\ref{thm:block-Ramsey}) in case the partition family is a tree. Specificaly, given a partition family $\F \subseteq L^{<\infty}(\Sigma, \vec{k} ; \upsilon)$ of finite orderly sequences of variable $\omega$-located words over an alphabet $\Sigma=\{\alpha_1, \alpha_2, \ldots \}$ dominated by a sequence $\vec{k}=(k_n)_{n\in\nat}\subseteq \nat$ and $\xi<\omega_1$,  Theorem~\ref{thm:block-Ramsey} provides no information on how to decide whether the homogeneous family falls in $\F$ or in its complement $L^{<\infty}(\Sigma, \vec{k} ; \upsilon)\setminus \F$, while Theorem~\ref{block-NashWilliams2} in case the partition family $\F$ is a tree provides a criterion, in terms of a Cantor-Bendixson type index of $\F$, according to which we can have such a decition. 

Using Theorem~\ref{block-NashWilliams2} we obtain a partition theorem for infinite orderly sequences of variable $\omega$-located words (Theorem~\ref{cor:blockNW}), which, can be said to be a Nash-Williams type partition theorem for variable $\omega$-located words. Particular cases of Theorem~\ref{cor:blockNW} are Bergelson-Blass-Hindman's in \cite{BBH} (Theorem 6.1) for variable located words over a finite alphabet and 
Carlson's theorem (Theorem 2 in \cite{C}) for variable words over a finite alphabet. 

As a consequence of Theorem~\ref{cor:blockNW} we prove, in Theorems~\ref{cor:centralNW} and ~\ref{cor:ncentralNW}, partition theorems for infinite sequences in a commutative and in a noncommutative semigroup, respectively, which are strong simultaneous extentions of the infinitary partition theorem of Milliken \cite{M}, Taylor \cite{T} and van der Waerden \cite{vdW} for general semigroups.

\subsection*{Notation}
Let $\Sigma=\{\alpha_1, \alpha_2, \ldots \}$ be an infinite countable alphabet, $\upsilon \notin \Sigma$ a variable and $\vec{k}=(k_n)_{n\in\nat}\subseteq \nat$. A finite orderly sequence $\bw=(w_1,\ldots,w_l)\in L^{<\infty}(\Sigma, \vec{k} ; \upsilon)$ is an \textit{initial segment} of $\bu=(u_1,\ldots,u_k)\in L^{<\infty}(\Sigma, \vec{k} ; \upsilon)$ iff $l\leq k$ and $w_i= u_i$ for  every $i=1,\ldots,l$ and $\bw$ is an \textit{initial segment} of $\vec{u}=(u_n)_{n\in\nat}\in L^{\infty}(\Sigma, \vec{k} ; \upsilon)$ if $w_i=u_i$ for all $i=1,\ldots,l$. In these cases we write $\bw\propto \bu$ and  $\bw\propto \vec{u}$, respectively, and we set $\bu\setminus \bw = (u_{l+1},\ldots,u_k)$ and $\vec{u}\setminus \bw = (u_n)_{n>l}$.

\begin{defn}\label{def:Fthin}
Let $\Sigma=\{\alpha_1, \alpha_2, \ldots \}$ be an infinite countable alphabet, $\upsilon \notin \Sigma$ a variable, $\vec{k}=(k_n)_{n\in\nat}\subseteq \nat$ and $\F\subseteq L^{<\infty}(\Sigma, \vec{k} ; \upsilon)$.
\begin{itemize}
\item[(i)] $\F$ is {\em thin\/} if there are no elements $\bw,\bu\in\F$
with $\bw \propto \bu$ and $\bw\ne \bu$.
\item[(ii)] $\F^* = \{\bw \in L^{<\infty}(\Sigma, \vec{k}; \upsilon): \bw\propto \bu$ for some 
$\bu\in \F\}\cup \{\emptyset\}$.
\item[(iii)] $\F$ is a {\em tree\/} if $\F^* = \F$.
\item[(iv)] $\F_* = \{\bw \in L^{<\infty}(\Sigma, \vec{k}; \upsilon): \bw\in EV^{<\infty}(\bu)$ 
for some $\bu\in \F\}\cup\{\emptyset\}$.
\item[(v)] $\F$ is {\em hereditary\/} if $\F_* = \F$.
\end{itemize}
\end{defn}

\begin{prop}\label{prop:thinfamily}
Every family $L^\xi(\Sigma, \vec{k} ; \upsilon)$, for $\xi<\omega_1$ is thin.
\end{prop}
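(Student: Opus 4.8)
The plan is to prove that $L^\xi(\Sigma, \vec k;\upsilon)$ is thin by induction on the countable ordinal $\xi$, mirroring the recursive structure of the Schreier system. First I would record the base cases: for $\xi = 0$ the family is $\{\emptyset\}$, which is trivially thin, and for $\xi = 1$ the family consists of one-term sequences $(w_1)$, so no two distinct members can satisfy $\bw\propto\bu$. For the inductive step I would fix $\xi > 1$, assume $L^\zeta(\Sigma,\vec k;\upsilon)$ is thin for every $\zeta < \xi$, and suppose toward a contradiction that there are $\bw = (w_1,\dots,w_l)$ and $\bu = (u_1,\dots,u_m)$ in $L^\xi(\Sigma,\vec k;\upsilon)$ with $\bw\propto\bu$ and $\bw\neq\bu$; so $l < m$ and $w_i = u_i$ for $1\le i\le l$. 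In particular both sequences are nonempty (Remark~\ref{rem1.4}(i)), so $w_1 = u_1$ is a common first term.

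The key step is to pass to the associated subsets of $\nat$ and invoke the known thinness of the Schreier sets $\A_\xi$, together with Proposition~\ref{justification}. Writing $t = w_1 = u_1$ with $\min dom(t) = n$, the definition of $L^\xi(\Sigma,\vec k;\upsilon)$ gives that $\{\min dom(w_1),\dots,\min dom(w_l)\}\in\A_\xi$ and $\{\min dom(u_1),\dots,\min dom(u_m)\}\in\A_\xi$, and these two finite subsets of $\nat$ share the minimum element $n$ and the first is a proper subset of an initial piece of the second — more precisely, after removing $n$, the set $\{\min dom(w_2),\dots,\min dom(w_l)\}$ is an initial segment of $\{\min dom(u_2),\dots,\min dom(u_m)\}$ in the ordered sense. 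By Proposition~\ref{justification} (equivalently Theorem~1.6 of \cite{F3}), both $\{\min dom(w_2),\dots,\min dom(w_l)\}$ and $\{\min dom(u_2),\dots,\min dom(u_m)\}$ lie in $\A_{\xi_n}$ for the concrete ordinal $\xi_n < \xi$, and they are both in $[\{n+1,n+2,\dots\}]^{<\omega}$. Hence $(w_2,\dots,w_l)$ and $(u_2,\dots,u_m)$ both belong to $L^{\xi_n}(\Sigma,\vec k;\upsilon)$ (or the former is $\emptyset$ if $l=1$, which is an initial segment of anything), and $(w_2,\dots,w_l)\propto(u_2,\dots,u_m)$ with the two sequences distinct. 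This contradicts the induction hypothesis that $L^{\xi_n}(\Sigma,\vec k;\upsilon)$ is thin, completing the induction.

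I expect the main obstacle to be bookkeeping the edge cases cleanly rather than any real difficulty: in particular handling $l = 1$ (where $\bw\setminus\{t\}$ is the empty sequence, which must be treated as an initial segment of every sequence and as a member of $L^{\xi_n}$ only in the sense of Proposition~\ref{justification}'s formulation with $s = \emptyset$), and making sure the ordinal $\xi_n$ extracted from Proposition~\ref{justification} is genuinely the same for both $\bw$ and $\bu$ — which it is, since it depends only on $\xi$ and on $n = \min dom(t)$, the shared first coordinate. One should also double-check that the thinness of $\A_\xi$ itself is available; if not, the cleaner route is to prove thinness of $(\A_\xi)_{\xi<\omega_1}$ first by the identical induction using $\A_\xi(n) = \A_{\xi_n}\cap[\{n+1,\dots\}]^{<\omega}$, and then the statement for $L^\xi(\Sigma,\vec k;\upsilon)$ follows by transporting along the map $(w_1,\dots,w_l)\mapsto\{\min dom(w_1),\dots,\min dom(w_l)\}$, which respects the initial-segment relation. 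Either way the argument is a routine ordinal induction whose only content is the recursion formula already supplied by Proposition~\ref{justification}.
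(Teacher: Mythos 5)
Your proof is correct and is essentially the paper's argument: the paper disposes of this in one line by citing the thinness of the Schreier families $\A_\xi$ from \cite{F3} and transporting it along $(w_1,\ldots,w_l)\mapsto\{\min dom(w_1),\ldots,\min dom(w_l)\}$, which is exactly the "cleaner route" you describe at the end, and the self-contained induction you carry out on $L^\xi(\Sigma,\vec k;\upsilon)$ via Proposition~\ref{justification} is just the unpacked version of the cited fact. Your handling of the edge case $l=1$ (where $\emptyset\in L^{\xi_n}$ forces $\xi_n=0$ and hence $m=1$, a contradiction) is the only point the paper glosses over, and you treat it correctly.
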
 
\begin{proof} 
It follows from the fact that the families $\A_\xi$ are thin (cf. \cite{F3})(which means that if 
$s,t\in \A_\xi$ and $s\propto t$, then $s=t$).
\end{proof}

\begin{prop}\label{prop:canonicalrep}
Let $\xi$ be a nonzero countable ordinal number, $\Sigma=\{\alpha_1, \alpha_2, \ldots \}$ be an infinite countable alphabet, $\upsilon \notin \Sigma$ a variable and  $\vec{k}=(k_n)_{n\in\nat}\subseteq \nat$. Then

(i) every infinite orderly sequence  $\vec{s} = (s_n)_{n\in\nat}\in L^{\infty}(\Sigma, \vec{k} ; \upsilon)$ has canonical representation with respect to $L^\xi(\Sigma, \vec{k} ; \upsilon)$, which means that there exists a unique strictly increasing sequence $(m_n)_{n\in\nat}$ in $\nat$ 
so that $(s_1,\ldots,s_{m_1}) \in L^\xi(\Sigma, \vec{k} ; \upsilon)$ and $(s_{m_{n-1}+1},\ldots,s_{m_n}) \in L^\xi(\Sigma, \vec{k} ; \upsilon)$ for every $n > 1$; and,  

(ii) every nonempty finite orderly sequence $\bs = (s_1,\ldots,s_k)\in L^{<\infty}(\Sigma, \vec{k} ; \upsilon)$ has canonical representation with respect to $L^\xi(\Sigma, \vec{k} ; \upsilon)$, so either $\bs\in (L^\xi(\Sigma, \vec{k} ; \upsilon))^* \setminus L^\xi(\Sigma, \vec{k} ; \upsilon)$ or there exist unique $n\in\nat$, and  $m_1, \ldots,m_n \in\nat$ with $m_1 < \ldots < m_n\leq k$ such that 
either $(s_1,\ldots,s_{m_1}),\ldots,(s_{m_{n-1}+1},\ldots,s_{m_n}) \in L^\xi(\Sigma, \vec{k} ; \upsilon)$ and $m_n = k$, 
or $(s_1,\ldots,s_{m_1}),\ldots,$ $(s_{m_{n-1}+1},\ldots,s_{m_n}) \in L^\xi(\Sigma, \vec{k} ; \upsilon)$,  $(s_{{m_n}+1},\ldots,s_k)\in (L^\xi(\Sigma, \vec{k} ; \upsilon))^* \setminus L^\xi(\Sigma, \vec{k} ; \upsilon)$. 
\end{prop}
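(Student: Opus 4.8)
The plan is to reduce everything to the analogous canonical representation property for the Schreier families $\A_\xi$ on $\nat$, which is established in \cite{F3}. The bridge is the map that sends a variable $\omega$-located word $w$ to $\min\mathrm{dom}(w)$; applied coordinatewise it sends $\bs=(s_1,\ldots,s_k)\in L^{<\infty}(\Sigma,\vec{k};\upsilon)$ to the set $\{\min\mathrm{dom}(s_1),\ldots,\min\mathrm{dom}(s_k)\}\in[\nat]^{<\omega}_{>0}$, and by Definition~\ref{recursivethinblock} a consecutive block $(s_{m_{j-1}+1},\ldots,s_{m_j})$ lies in $L^\xi(\Sigma,\vec{k};\upsilon)$ if and only if the corresponding block of first-domain-elements lies in $\A_\xi$. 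Since $s_1<\cdots<s_k$ forces $\min\mathrm{dom}(s_1)<\cdots<\min\mathrm{dom}(s_k)$, this correspondence is order-faithful, so $\propto$ on $L^{<\infty}$ matches initial-segment inclusion on $[\nat]^{<\omega}_{>0}$, and $(L^\xi(\Sigma,\vec{k};\upsilon))^*$ corresponds to $(\A_\xi)^*$.

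For part (i), I would invoke the known fact (used implicitly in \cite{F1},\cite{F3}, and which follows from Proposition~\ref{justification} by an easy induction on $\xi$) that every infinite subset of $\nat$ has a canonical representation with respect to $\A_\xi$: there is a unique strictly increasing $(m_n)_{n\in\nat}$ partitioning the set into consecutive blocks each lying in $\A_\xi$. Existence of $(m_n)$ comes from choosing $m_1$ minimal so that $\{t_1,\ldots,t_{m_1}\}\in\A_\xi$ — possible because $\A_\xi$ is a \emph{spanning} family (every infinite set has an initial segment in $\A_\xi$, a standard property of the Schreier families proved by transfinite induction using the recursion in Definition~\ref{Irecursivethin}) — and then iterating on the tail; uniqueness comes from thinness of $\A_\xi$ (Proposition~\ref{prop:thinfamily}), since two distinct candidate $m_1$'s would give comparable elements of $\A_\xi$. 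Transporting this back along the first-domain map gives exactly the statement for $\vec{s}\in L^{\infty}(\Sigma,\vec{k};\upsilon)$.

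For part (ii), given a nonempty $\bs=(s_1,\ldots,s_k)$, I would extend it arbitrarily to an infinite sequence $\vec{s}\in L^{\infty}(\Sigma,\vec{k};\upsilon)$ (possible since one can always append variable $\omega$-located words of strictly larger domain), apply part (i) to get the blocking $(m_n)_{n\in\nat}$, and then let $n$ be the largest index with $m_n\le k$; if no such index exists, i.e.\ $k<m_1$, then $\bs$ is a proper initial segment of an element of $L^\xi(\Sigma,\vec{k};\upsilon)$, hence $\bs\in(L^\xi(\Sigma,\vec{k};\upsilon))^*\setminus L^\xi(\Sigma,\vec{k};\upsilon)$. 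Otherwise $(s_1,\ldots,s_{m_1}),\ldots,(s_{m_{n-1}+1},\ldots,s_{m_n})\in L^\xi(\Sigma,\vec{k};\upsilon)$, and the leftover tail $(s_{m_n+1},\ldots,s_k)$ is either empty (the case $m_n=k$) or a proper initial segment of the next $L^\xi$-block, hence lies in $(L^\xi(\Sigma,\vec{k};\upsilon))^*\setminus L^\xi(\Sigma,\vec{k};\upsilon)$. One must check this tail does not itself already belong to $L^\xi(\Sigma,\vec{k};\upsilon)$ — which is immediate from the maximality of $n$ — and that the data $n,m_1,\ldots,m_n$ are uniquely determined, which again reduces to thinness of $\A_\xi$.

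**The main obstacle** is not conceptual but bookkeeping: one must make sure the two structural properties of $\A_\xi$ that are really being used — that it is thin (available as Proposition~\ref{prop:thinfamily}) and that it is spanning/compact in the sense that every infinite set has a (necessarily unique, by thinness) initial segment in $\A_\xi$ — are genuinely on record in \cite{F3} or follow cleanly from Proposition~\ref{justification}, since the whole proof is just the transport of the $\A_\xi$-canonical-representation lemma through an order-isomorphism of posets. I would therefore state the $\A_\xi$-version as a cited lemma from \cite{F3} and keep the present proof to the half-page verification that $\bw\mapsto\{\min\mathrm{dom}(w_i)\}$ is the required order-faithful correspondence intertwining $L^\xi$ with $\A_\xi$, $\propto$ with initial segments, and $(\cdot)^*$ with $(\cdot)^*$.
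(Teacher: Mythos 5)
Your proposal is correct and follows essentially the same route as the paper: the paper's proof consists precisely of reducing, via the map $\bw\mapsto\{\min dom(w_1),\ldots,\min dom(w_l)\}$, to the canonical representation of (finite or infinite) increasing sequences in $\nat$ with respect to $\A_\xi$ cited from \cite{F3}, combined with the thinness of $L^\xi(\Sigma,\vec{k};\upsilon)$ from Proposition~\ref{prop:thinfamily}. You have merely spelled out the transport and the existence/uniqueness bookkeeping that the paper leaves implicit.
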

\begin{proof}
It follows from the fact that every nonempty increasing sequence (finite or infinite) in $\nat$ 
has canonical representation with respect to $\A_\xi$ (cf. \cite{F3}) and that 
the family $L^\xi(\Sigma, \vec{k} ; \upsilon)$ is thin (Proposition~\ref{prop:thinfamily}).
\end{proof}

Now, using Proposition~\ref{prop:canonicalrep}, we give an alternative description of the second horn of the dichotomy described in Theorem~\ref{thm:block-Ramsey} in case the partition family is a tree.

\begin{prop}\label{prop:tree}
Let $\xi\geq 1$ be a countable ordinal, $\Sigma=\{\alpha_1, \alpha_2, \ldots \}$ be an infinite countable alphabet, $\upsilon \notin \Sigma$ a variable, $\vec{k}=(k_n)_{n\in\nat}\subseteq \nat$ an increasing sequence, $\vec{u}\in L^{\infty}(\Sigma, \vec{k}; \upsilon)$ and $\F\subseteq L^{<\infty}(\Sigma, \vec{k} ; \upsilon)$ be a tree. Then 

$L^\xi(\Sigma, \vec{k} ; \upsilon) \cap EV^{<\infty}(\vec{u})\subseteq L^{<\infty}(\Sigma, \vec{k} ; \upsilon)\setminus \F$ if and only if 

$\F\cap EV^{<\infty}(\vec{u}) \subseteq (L^\xi(\Sigma, \vec{k} ; \upsilon))^* \setminus L^\xi(\Sigma, \vec{k} ; \upsilon)$. 
\end{prop}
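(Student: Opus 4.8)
The statement is an equivalence between two containment assertions, so the plan is to prove both implications, relying crucially on the canonical representation of finite orderly sequences with respect to $L^\xi(\Sigma, \vec{k}; \upsilon)$ supplied by Proposition~\ref{prop:canonicalrep}(ii), together with the fact (Proposition~\ref{prop:thinfamily}) that $L^\xi(\Sigma, \vec{k}; \upsilon)$ is thin. I would first observe the two structural facts that make the argument run: $\F$ being a tree means $\F^*=\F$, so every initial segment of a member of $\F$ is again in $\F$; and the set $EV^{<\infty}(\vec{u})$ is closed under taking initial segments, since an initial segment of $(u_1,\dots,u_l)$ with all $u_i\in EV(\vec{u})$ still has all its entries in $EV(\vec{u})$. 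Thus both $\F\cap EV^{<\infty}(\vec{u})$ and its ambient set are ``initial-segment closed'' in the relevant sense.

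For the direction ``$\Leftarrow$'': assume $\F\cap EV^{<\infty}(\vec{u})\subseteq (L^\xi(\Sigma, \vec{k}; \upsilon))^*\setminus L^\xi(\Sigma, \vec{k}; \upsilon)$, and take any $\bs\in L^\xi(\Sigma, \vec{k}; \upsilon)\cap EV^{<\infty}(\vec{u})$; I must show $\bs\notin\F$. If $\bs$ were in $\F$, then $\bs\in\F\cap EV^{<\infty}(\vec{u})$, hence $\bs\in (L^\xi(\Sigma, \vec{k}; \upsilon))^*\setminus L^\xi(\Sigma, \vec{k}; \upsilon)$, contradicting $\bs\in L^\xi(\Sigma, \vec{k}; \upsilon)$. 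So this direction is essentially immediate and uses only the definitions.

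For the direction ``$\Rightarrow$'': assume $L^\xi(\Sigma, \vec{k}; \upsilon)\cap EV^{<\infty}(\vec{u})\subseteq L^{<\infty}(\Sigma, \vec{k}; \upsilon)\setminus\F$, and take $\bw\in\F\cap EV^{<\infty}(\vec{u})$; I must show $\bw\in (L^\xi(\Sigma, \vec{k}; \upsilon))^*\setminus L^\xi(\Sigma, \vec{k}; \upsilon)$. Apply Proposition~\ref{prop:canonicalrep}(ii) to $\bw=(s_1,\dots,s_k)$: either $\bw\in (L^\xi)^*\setminus L^\xi$ and we are done, or $\bw$ splits canonically into consecutive blocks, the first of which, $(s_1,\dots,s_{m_1})$, lies in $L^\xi(\Sigma, \vec{k}; \upsilon)$. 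In the latter case, $(s_1,\dots,s_{m_1})$ is an initial segment of $\bw\in EV^{<\infty}(\vec{u})$, hence is itself in $EV^{<\infty}(\vec{u})$; so $(s_1,\dots,s_{m_1})\in L^\xi(\Sigma, \vec{k}; \upsilon)\cap EV^{<\infty}(\vec{u})\subseteq L^{<\infty}(\Sigma, \vec{k}; \upsilon)\setminus\F$, i.e.\ $(s_1,\dots,s_{m_1})\notin\F$. But $(s_1,\dots,s_{m_1})\propto\bw\in\F=\F^*$, so $(s_1,\dots,s_{m_1})\in\F^*=\F$, a contradiction. Hence only the first alternative is possible, giving $\bw\in (L^\xi(\Sigma, \vec{k}; \upsilon))^*\setminus L^\xi(\Sigma, \vec{k}; \upsilon)$.

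The argument is short and the only real subtlety — the point I would state carefully rather than wave at — is the verification that $EV^{<\infty}(\vec{u})$ is closed under passing to initial segments; everything else is bookkeeping with the canonical representation and the tree property $\F^*=\F$. I do not expect a genuine obstacle here, since both ingredients (canonical representation, thinness) have already been established, and the proof is really just a two-line case analysis wrapped around them.
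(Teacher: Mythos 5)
Your proof is correct and follows essentially the same route as the paper: the nontrivial direction is handled exactly as in the paper's proof, via the canonical representation of Proposition~\ref{prop:canonicalrep}(ii), the tree property $\F^*=\F$, and the closure of $EV^{<\infty}(\vec{u})$ under initial segments. The paper omits the converse direction as immediate, which you supply explicitly; that is the only difference.
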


\begin{proof} 
Let $L^\xi(\Sigma, \vec{k} ; \upsilon) \cap EV^{<\infty}(\vec{u})\subseteq L^{<\infty}(\Sigma, \vec{k} ; \upsilon)\setminus \F$ and $\bs = (s_1,\ldots,s_k)\in \F\cap EV^{<\infty}(\vec{u})$. 
Then $\bs$ has canonical representation with respect to $L^\xi(\Sigma, \vec{k} ; \upsilon)$
(Proposition~\ref{prop:canonicalrep}), hence 
either $\bs \in (L^\xi(\Sigma, \vec{k} ; \upsilon))^* \setminus L^\xi(\Sigma, \vec{k} ; \upsilon)$, as required, or there exists $\bs_1\in L^\xi(\Sigma, \vec{k} ; \upsilon)$ such that $\bs_1 \propto \bs$. 
The second case is impossible. 
Indeed, since $\F$ is a tree and $\bs\in \F\cap EV^{<\infty}(\vec{u})$, we have 
$\bs_1 \in \F\cap EV^{<\infty}(\vec{u}) \cap L^\xi(\Sigma, \vec{k} ; \upsilon)$; a contradiction to our 
assumption. 
Hence, $\F\cap EV^{<\infty}(\vec{u}) \subseteq (L^\xi(\Sigma, \vec{k} ; \upsilon))^* \setminus L^\xi(\Sigma, \vec{k} ; \upsilon)$. 
\end{proof}

\begin{defn}\label{def:aclosed}
Let $\Sigma=\{\alpha_1, \alpha_2, \ldots \}$ be an infinite countable alphabet, ordered according to the natural numbers, $\upsilon \notin \Sigma$ a variable and $\vec{k}=(k_n)_{n\in\nat}\subseteq \nat$. We set 

$D=\{(n,\alpha): n\in \nat, \alpha \in \{\upsilon, \alpha_1, \alpha_2, \ldots, \alpha_{k_{n}} \} \}$. 
\newline
Note that $D$ is a countable set. Let $[D]^{<\omega}$ be the set of all finite subsets of $D$. Identifing every $s\in L(\Sigma, \vec{k}; \upsilon)$ with the corresponting element of $[D]^{<\omega}$ and consequently every $\bs\in L^{<\infty}(\Sigma, \vec{k}; \upsilon)$ and every $\vec{s}\in L^{\infty}(\Sigma, \vec{k}; \upsilon)$) with their characteristic functions $x_{\sigma(\bs)} \in \{0,1\}^{[D]^{<\omega}}$ and $x_{\sigma(\vec{s})} \in \{0,1\}^{[D]^{<\omega}}$ respectively, where $\sigma(\bs) = \{s_1,\ldots,s_k\}$ for every $\bs = (s_1,\ldots,s_k)\in L^{<\infty}(\Sigma, \vec{k}; \upsilon)$, $\sigma(\vec{s}) = \{s_n : n\in\nat\}$ for every $\vec{s} = (s_n)_{n\in\nat}\in L^{\infty}(\Sigma, \vec{k}; \upsilon)$ and $\sigma(\emptyset) = \emptyset$, we say that a family $\F\subseteq L^{<\infty}(\Sigma, \vec{k}; \upsilon)$ is {\em pointwise closed\/} iff the family $\{x_{\sigma(\bs)} :\bs \in \F\}$ is closed in the product topology (equivalently by the pointwise convergence topology) of $\{0,1\}^{[D]^{<\omega}}$ and in analogy a family $\U\subseteq L^{\infty}(\Sigma, \vec{k}; \upsilon)$ is {\em pointwise closed\/} iff $\{x_{\sigma(\vec{s})} :\vec{s}\in \U\}$ is closed in $\{0,1\}^{[D]^{<\omega}}$ with the product topology.
\end{defn}

\begin{prop}\label{prop:finitefamily}
Let $\Sigma=\{\alpha_1, \alpha_2, \ldots \}$ be an infinite countable alphabet, $\upsilon \notin \Sigma$ a variable and $\vec{k}=(k_n)_{n\in\nat}\subseteq \nat$.

{\rm (i)} If $\F\subseteq L^{<\infty}(\Sigma, \vec{k}; \upsilon)$ is a tree, then $\F$ is pointwise closed if and only if there does not exist an infinite sequence $(\bs_n)_{n\in\nat}$ in $\F$ 
such that $\bs_n \propto \bs_{n+1}$ and $\bs_n\ne \bs_{n+1}$ for all $n\in\nat$.

{\rm (ii)} If $\F\subseteq L^{<\infty}(\Sigma, \vec{k}; \upsilon)$  is hereditary, then $\F$ is pointwise closed if and only if there does not exist $\vec{s}\in L^{\infty}(\Sigma, \vec{k} ; \upsilon)$ such that $EV^{<\infty}(\vec{s}) \subseteq \F$. 

{\rm (iii)} The hereditary family $(L^\xi(\Sigma, \vec{k} ; \upsilon) \cap EV^{<\infty}(\vec{u}) )_*$ is pointwise closed for every countable ordinal $\xi$ and $\vec{u} \in L^{\infty}(\Sigma, \vec{k} ; \upsilon)$. 
\end{prop}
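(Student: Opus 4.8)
The plan is to reduce everything to sequential arguments (since $D$, hence $[D]^{<\omega}$, is countable, $\{0,1\}^{[D]^{<\omega}}$ is metrizable, so a family is pointwise closed iff its set of characteristic functions is sequentially closed), to prove {\rm (i)} and {\rm (ii)} by a common analysis, and to deduce {\rm (iii)} from {\rm (ii)}. Identify each $\bs=(s_1,\ldots,s_k)\in L^{<\infty}(\Sigma,\vec{k};\upsilon)$ with the finite $<$-chain of located words $\sigma(\bs)=\{s_1,\ldots,s_k\}$; since a finite $<$-chain of located words has a unique orderly arrangement, this is a bijection onto its image, whose points in $\{0,1\}^{[D]^{<\omega}}$ are exactly the characteristic functions of finite $<$-chains, while for $\vec{s}=(s_n)_n\in L^{\infty}(\Sigma,\vec{k};\upsilon)$ the point $x_{\sigma(\vec{s})}$ has infinite support. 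For {\rm (i)}, if $\bs_n\in\F$ with $\bs_n\propto\bs_{n+1}\ne\bs_n$, the unique $\vec{s}\in L^{\infty}(\Sigma,\vec{k};\upsilon)$ extending all $\bs_n$ satisfies $x_{\sigma(\bs_n)}\to x_{\sigma(\vec{s})}$, and $x_{\sigma(\vec{s})}$, having infinite support, is not $x_{\sigma(\bt)}$ for any finite $\bt\in\F$; so $\F$ is not pointwise closed. Conversely, suppose no such $\propto$-chain exists and let $x=\lim_n x_{\sigma(\bs_n)}$ with $\bs_n\in\F$; put $S=\{t:x(t)=1\}$. Any two members of $S$ lie in a common $\sigma(\bs_n)$, which is a $<$-chain, so $S$ is $<$-linearly ordered; since along a $<$-chain of located words the numbers $\min dom(\cdot)$ strictly increase, $S$ has no infinite descending subchain and no element above an infinite ascending subchain, hence is well-ordered of type at most $\omega$, say $S=\{u_1<u_2<\ldots\}$. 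The crux is a domain-bounded restriction argument: for each $n$ there are only finitely many located words with domain inside $\{1,\ldots,\max dom(u_n)\}$ (this is where functional domination by $\vec{k}$ enters), so for $m$ large all $x_{\sigma(\bs_m)}$ agree on them, which forces the entries of $\bs_m$ with domain in $\{1,\ldots,\max dom(u_n)\}$ to be, for $m$ large, the fixed set $\{u_1,\ldots,u_n\}$; by orderliness of $\bs_m$ these entries form an initial segment, so $(u_1,\ldots,u_n)\propto\bs_m$, whence $(u_1,\ldots,u_n)\in\F$ as $\F$ is a tree. If $S$ were infinite this yields a strictly increasing $\propto$-chain in $\F$, a contradiction; so $S$ is finite and the same argument gives $x=x_{\sigma(\bt)}$ with $\bt$ the orderly arrangement of $S$ and $\bt\in\F$, proving $\F$ pointwise closed.

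Part {\rm (ii)} runs identically with $\F=\F_*$ replacing $\F=\F^*$; the only change is that membership of the approximating initial segments in $\F$ is obtained from heredity together with the remark that each entry $w$ of an orderly sequence $\bv$ equals $T_0(w)\in EV(\bv)$, so every initial segment of $\bv$ lies in $EV^{<\infty}(\bv)$. Thus in the situation above $(u_1,\ldots,u_n)\in EV^{<\infty}(\bs_m)\subseteq\F_*=\F$; if the support $S$ of a limit point is finite this gives $x=x_{\sigma(\bt)}\in\{x_{\sigma(\bt')}:\bt'\in\F\}$, and if some convergent sequence of elements of $\F$ has infinite support $S=\{u_n\}_n$, then putting $\vec{s}=(u_n)_n$ any $\bw\in EV^{<\infty}(\vec{s})$ involves only $u_1,\ldots,u_N$ for some $N$, so $\bw\in EV^{<\infty}((u_1,\ldots,u_N))\subseteq\F$, hence $EV^{<\infty}(\vec{s})\subseteq\F$. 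Conversely, if $\vec{s}\in L^{\infty}(\Sigma,\vec{k};\upsilon)$ satisfies $EV^{<\infty}(\vec{s})\subseteq\F$, then $(s_1,\ldots,s_n)\in EV^{<\infty}(\vec{s})\subseteq\F$ for all $n$, while $x_{\sigma((s_1,\ldots,s_n))}\to x_{\sigma(\vec{s})}\notin\{x_{\sigma(\bt)}:\bt\in\F\}$, so $\F$ is not pointwise closed. This is the asserted equivalence.

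For {\rm (iii)}, the family $(L^\xi(\Sigma,\vec{k};\upsilon)\cap EV^{<\infty}(\vec{u}))_*$ is hereditary ($EV^{<\infty}$ is transitive, so $(\cdot)_*$ is idempotent), so by {\rm (ii)} it suffices to rule out $\vec{s}=(s_n)_n\in L^{\infty}(\Sigma,\vec{k};\upsilon)$ with $EV^{<\infty}(\vec{s})\subseteq(L^\xi(\Sigma,\vec{k};\upsilon)\cap EV^{<\infty}(\vec{u}))_*$. Such an $\vec{s}$ would give, for each $n$, some $\bv^{(n)}=(v_1,\ldots,v_{l_n})\in L^\xi(\Sigma,\vec{k};\upsilon)$ with $(s_1,\ldots,s_n)\in EV^{<\infty}(\bv^{(n)})$; since every $s_i$ with $i\le n$ is an extracted word of $\bv^{(n)}$, its least domain point is that of some entry of $\bv^{(n)}$, whence $\{\min dom(s_1),\ldots,\min dom(s_n)\}\subseteq\{\min dom(v_1),\ldots,\min dom(v_{l_n})\}\in\A_\xi$. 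Then $M=\{\min dom(s_n):n\in\nat\}$ is an infinite set all of whose initial segments are contained in elements of $\A_\xi$, which is impossible because the hereditary closure of $\A_\xi$ (the family of all subsets of elements of $\A_\xi$) is pointwise compact, hence contains no infinite set — a standard property of the Schreier families (cf. \cite{F3}). Hence no such $\vec{s}$ exists and the family is pointwise closed.

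The main obstacle is the domain-bounded restriction argument underlying {\rm (i)}–{\rm (ii)}: one must convert pointwise convergence of the $x_{\sigma(\bs_n)}$ into an eventual structural stabilization of the sequences $\bs_n$ themselves, and this genuinely uses both functional domination by $\vec{k}$ (to keep the set of located words supported below a fixed level finite) and orderliness (to upgrade coincidence of sets of located words to coincidence of initial segments); the auxiliary fact that the support of a limit point is $<$-well-ordered of type at most $\omega$ also needs a short separate verification. Once the compactness of the hereditary closure of $\A_\xi$ is granted, part {\rm (iii)} is then routine.
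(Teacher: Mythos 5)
Your proof is correct, and it is worth noting that the paper itself gives no argument here beyond ``this follows directly from the definitions (cf.\ \cite{F3}, \cite{FN1})'', so what you have written is precisely the content being deferred to those references, transplanted from $[\nat]^{<\omega}$ to located words. The three points that genuinely need checking are all present in your write-up: (a) the stabilization argument, where pointwise convergence of the $x_{\sigma(\bs_m)}$ plus the finiteness of $\{t\in L(\Sigma,\vec k;\upsilon):\operatorname{dom}(t)\subseteq\{1,\dots,N\}\}$ (this is exactly where domination by $\vec k$ is used) forces the restricted supports to stabilize to $\{u_1,\dots,u_n\}$, and orderliness upgrades this to $(u_1,\dots,u_n)\propto\bs_m$, so that the tree (resp.\ hereditary) hypothesis puts the initial segments back into $\F$; (b) the observation that $w=T_0(w)\in EV(\bv)$ for each entry $w$ of $\bv$, which is what makes every hereditary family a tree and lets (ii) reuse the analysis of (i); and (c) the reduction of (iii), via $\min dom$ of extracted words, to the compactness of the hereditary closure of $\A_\xi$, which is the standard Schreier-family fact from \cite{F3} that the paper is implicitly invoking (and which it needs anyway for Remark~3.10(i) and Proposition~3.12). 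The only mild leaps are the transitivity of extraction (needed for $(\cdot)_*$ to be idempotent in (iii)) and the compactness of the hereditary closure of $\A_\xi$, both of which are taken for granted throughout the paper itself, so citing them is appropriate. No gaps.
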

\begin{proof}
This follows directly from the definitions (for details cf. \cite{F3}, \cite{FN1}).
\end{proof}

Let $\vec{s} \in L^{\infty}(\Sigma, \vec{k} ; \upsilon)$. For a hereditary and pointwise closed family $\F\subseteq L^{<\infty}(\Sigma, \vec{k}; \upsilon)$ we will define the strong Cantor-Bendixson index $sO_{\vec{s}}(\F)$ of $\F$ with respect to $\vec{s} $.

\begin{defn}\label{def:Cantor-Bendix}
Let $\Sigma=\{\alpha_1, \alpha_2, \ldots \}$ be an infinite countable alphabet, $\upsilon \notin \Sigma$ a variable, $\vec{k}=(k_n)_{n\in\nat}\subseteq \nat$ an increasing sequence, $\vec{s}\in L^{\infty}(\Sigma, \vec{k} ; \upsilon)$ and let $\F\subseteq L^{<\infty}(\Sigma, \vec{k}; \upsilon)$ be a hereditary and pointwise closed family. For every $\xi <\omega_1$ we define the families $(\F)_{\vec{s}}^\xi$ inductively as follows:  
\newline
For every $\bw = (w_1,\ldots,w_l)\in \F\cap EV^{<\infty}(\vec{s})$ we set 

$A_{\bw}=\{t\in EV(\vec{s}) :(w_1,\ldots,w_l,t)\notin \F\}$ and $A_{\emptyset}=\{t\in EV(\vec{s}) :(t)\notin \F\} $.
\newline
We define
\begin{center}
$(\F)_{\vec{s}}^1 = \{\bw \in \F\cap EV^{<\infty}(\vec{s})\cup \{\emptyset\}
 : A_{\bw}$ does not contain an infinite orderly sequence $\}$.
\end{center}
It is easy to verify that $(\F)_{\vec{s}}^1$ is hereditary, hence pointwise closed (Proposition~\ref{prop:finitefamily}).
So, we can define for every $\xi >1$ the $\xi$-derivatives of $\F$ 
recursively as follows:
\begin{itemize}
\item[]  $ (\F)_{\vec{s}}^{\zeta +1} = ((\F)_{\vec{s}}^\zeta  )_{\vec{s}}^1$ for all $ \zeta <\omega_1$, and
\item[] $(\F)_{\vec{s}}^\xi = \bigcap_{\beta<\xi} (\F)_{\vec{s}}^\beta$ for $\xi$ a limit ordinal.
\end{itemize}

The  {\em strong Cantor-Bendixson index} $sO_{\vec{s}}(\F)$ of $\F$ on $\vec{s}$ is the smallest countable ordinal $\xi$ such that $(\F)_{\vec{s}}^\xi = \emptyset$.
\end{defn}

\begin{remark}\label{rem:Cantor-Bendix} 
Let $\vec{s}\in L^{\infty}(\Sigma, \vec{k}; \upsilon)$ and let $\F_1, \R_1,\subseteq L^{<\infty}(\Sigma, \vec{k}; \upsilon)$ be hereditary and pointwise closed families.
\begin{itemize}
\item[(i)] $sO_{\vec{s}}(\F_1)$ is a countable successor ordinal less than or 
equal to the ``usual'' Cantor-Bendixson index $O(\F_1)$ of $\F_1$ into 
$\{0,1\}^{[D]^{<\omega}}$ (cf. \cite{KM}).

\item[(ii)] $sO_{\vec{s}} (\F_1\cap EV^{<\infty}(\vec{s})) =$ $sO_{\vec{s}}(\F_1)$. 

\item[(iii)] $sO_{\vec{s}}(\F_1) \leq sO_{\vec{s}}(\R_1)$ if $\F_1\subseteq \R_1$.

\item[(iv)] If $\vec{s}_1 \prec \vec{s}$ and $\bw \in (\F_1)_{\vec{s}}^\xi$, then for every $\bw_1\in EV^{<\infty}(\vec{s}_1)$ such that $\sigma(\bw_1)=\sigma(\bw) \cap EV(\vec{s}_1) $ we have that $\bw_1\in (\F_1)_{\vec{s}_1}^\xi$ , since $EV(\vec{s}_1)\subseteq EV(\vec{s})$.

\item[(v)] If $\vec{s}_1\prec \vec{s}$, then $sO_{\vec{s}_1} (\F_1) \geq sO_{\vec{s}}(\F_1)$, 
according to (iv). 

\item[(vi)] If $\sigma(\vec{s}_1)\setminus \sigma(\vec{s})$ is a finite set, then $sO_{\vec{s}_1}(\F_1) 
\geq $$sO_{\vec{s}}(\F_1)$.
\end{itemize}
\end{remark}

The corresponding strong Cantor-Bendixson index to $L^\xi(\Sigma, \vec{k} ; \upsilon)$ is equal to $\xi+1$, with respect any sequence $\vec{s}\in L^{\infty}(\Sigma, \vec{k} ; \upsilon)$.

\begin{prop} 
\label{prop:Cantor-Bendix}
Let $\xi<\omega_1$ be an ordinal, $\Sigma=\{\alpha_1, \alpha_2, \ldots \}$ be an infinite countable alphabet, $\upsilon \notin \Sigma$ a variable, $\vec{k}=(k_n)_{n\in\nat}\subseteq \nat$ an increasing sequence and $\vec{s}\in L^{\infty}(\Sigma, \vec{k} ; \upsilon)$.
\begin{center}
 $sO_{\vec{s}_1}\Big((L^\xi(\Sigma, \vec{k} ; \upsilon) \cap EV^{<\infty}(\vec{s}))_*\Big)= \xi+1$ for every $\vec{s}_1\prec \vec{s}$.
\end{center}
\end{prop}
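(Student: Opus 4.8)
The plan is to prove the index equality $sO_{\vec{s}_1}\big((L^\xi(\Sigma,\vec{k};\upsilon)\cap EV^{<\infty}(\vec{s}))_*\big)=\xi+1$ by transfinite induction on $\xi$, exactly paralleling the proof for Schreier sets $\A_\xi$ (cf.\ \cite{F3}). First I would reduce to a cleaner statement: since $\vec{s}_1\prec\vec{s}$ gives $EV(\vec{s}_1)\subseteq EV(\vec{s})$, Remark~\ref{rem:Cantor-Bendix}(ii) lets me replace $EV^{<\infty}(\vec{s})$ by $EV^{<\infty}(\vec{s}_1)$ without changing the index, so it suffices to compute $sO_{\vec{s}_1}\big((L^\xi(\Sigma,\vec{k};\upsilon)\cap EV^{<\infty}(\vec{s}_1))_*\big)$; equivalently, writing $\vec{v}$ for $\vec{s}_1$, I show $sO_{\vec{v}}\big((L^\xi(\Sigma,\vec{k};\upsilon))_*\cap EV^{<\infty}(\vec{v})\big)=\xi+1$ for every $\vec{v}\in L^\infty(\Sigma,\vec{k};\upsilon)$, where I have used that $(L^\xi\cap EV^{<\infty}(\vec{v}))_* = (L^\xi)_*\cap EV^{<\infty}(\vec{v})$ since $EV^{<\infty}(\vec{v})$ is hereditary under the $EV^{<\infty}(\cdot)$-operation. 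The key combinatorial input is Proposition~\ref{justification}: for $t\in L(\Sigma,\vec{k};\upsilon)$ with $\min dom(t)=n$ one has $L^\xi(\Sigma,\vec{k};\upsilon)(t)=L^{\xi_n}(\Sigma,\vec{k};\upsilon)\cap(L^{<\infty}(\Sigma,\vec{k};\upsilon)-t)$ with $\xi_n<\xi$, $\xi_n=\zeta$ when $\xi=\zeta+1$, and $(\xi_n)$ strictly increasing to $\xi$ when $\xi$ is a limit.

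The base case $\xi=1$: here $L^1(\Sigma,\vec{k};\upsilon)=\{(w):w\in L(\Sigma,\vec{k};\upsilon)\}$, so $(L^1)_*\cap EV^{<\infty}(\vec{v})=\{\emptyset\}\cup\{(w):w\in EV(\vec{v})\}$. For $\bw=(w)$ the set $A_{\bw}=\{t\in EV(\vec{v}):(w,t)\notin(L^1)_*\}=EV(\vec v)$ minus the elements forming singletons — actually $(w,t)\notin L^1_*$ always since length-$2$ sequences are not in $(L^1)_*$ — so $A_{(w)}=EV(\vec v)$ contains an infinite orderly sequence (e.g.\ $(w_{2n})_n$ of the generators of $\vec v$), hence $(w)\notin(L^1_*\cap EV^{<\infty}(\vec v))^1_{\vec v}$, whereas $A_\emptyset=\emptyset$ so $\emptyset\in$ the first derivative; the second derivative is empty. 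Thus $sO_{\vec v}=2=1+1$. For the inductive step with $\xi>1$: the crucial lemma is that for $t\in EV(\vec v)$ with $\min dom(t)=n$, the ``section'' of the derivative at $t$ relates to the index of $L^{\xi_n}$, giving $sO$ of the section over $\vec v - t$ equal to $\xi_n+1$ by the induction hypothesis; a standard index-arithmetic computation (as in \cite{F3}) then assembles these into $\sup_n(\xi_n+1)+?$. Concretely, one shows $\big((L^\xi)_*\cap EV^{<\infty}(\vec v)\big)^{\zeta}_{\vec v}$ at $\emptyset$ records whether all but finitely many generators $t$ of $\vec v$ have $sO_{\vec v-t}$ of their $t$-section $>\zeta$, forcing the global index to jump exactly when $\zeta$ passes $\sup_n\xi_n$ in the limit case (yielding $\xi+1$ since $\sup_n(\xi_n+1)=\xi$ and one more derivative kills $\emptyset$) and to be $\zeta+2=\xi+1$ in the successor case $\xi=\zeta+1$.

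The main obstacle I anticipate is the bookkeeping in the inductive step: one must carefully track how the derivative operator $(\cdot)^1_{\vec v}$ interacts with the section operation $\F\mapsto\F(t)$ and the shift $\vec v\mapsto \vec v-t$, establishing a clean identity of the form $\big((L^\xi)_*\cap EV^{<\infty}(\vec v)\big)^{\zeta}_{\vec v}(t)=\big((L^{\xi_n})_*\cap EV^{<\infty}(\vec v-t)\big)^{\zeta}_{\vec v - t}$ (up to the $EV^{<\infty}$-closure), valid for each fixed first entry $t$ with $\min dom(t)=n$. Granting such an identity, Remark~\ref{rem:Cantor-Bendix}(iv),(v) guarantee monotonicity of the indices under extraction, and the computation $sO_{\vec v}=\xi+1$ follows by matching the successor/limit cases of Proposition~\ref{justification} against the recursion defining the $\xi$-derivatives. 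The remaining inequalities — $sO\le\xi+1$ needs exhibiting that $(L^\xi)_*\cap EV^{<\infty}(\vec v)$ has no infinite $\propto$-chain restricted appropriately, which is Proposition~\ref{prop:finitefamily}(iii), and $sO\ge\xi+1$ needs producing an explicit element of each derivative up to level $\xi$, built by concatenating extracted words whose $\min dom$'s realize a member of $\A_\xi$ — are routine once the section identity is in place, and I would present them briefly citing the analogous arguments in \cite{F3} and \cite{FN1}.
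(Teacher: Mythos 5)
Your plan follows essentially the same route as the paper: transfinite induction on $\xi$, the base case $\xi=1$ computed directly, and the inductive step driven by Proposition~\ref{justification} together with an identity relating the derivative of the section at $t$ (with $\min dom(t)=n$) to the derivative of $(L^{\xi_n}(\Sigma,\vec{k};\upsilon)\cap EV^{<\infty}(\vec{s}-t))_*$, which is exactly how the paper obtains $\big((L^\xi(\Sigma,\vec{k};\upsilon)\cap EV^{<\infty}(\vec{s}))_*\big)_{\vec{s}_1}^{\xi}=\{\emptyset\}$. The one inaccuracy is your attribution of the upper bound $sO\le\xi+1$ to Proposition~\ref{prop:finitefamily}(iii) (pointwise closedness only gives countability of the index); in the paper this bound comes from the induction hypothesis on sections combined with a localization of the derivative (Theorem~1.18 in \cite{F2}): if the $\xi$-derivative exceeded $\{\emptyset\}$, some section at an $s\in EV(\vec{s}_2)$ would have nonempty $\xi$-derivative, contradicting the inductive hypothesis.
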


\begin{proof} 
The family $(L^\xi(\Sigma, \vec{k}  ; \upsilon) \cap EV^{<\infty}(\vec{s}))_*$ is hereditary and pointwise closed (Proposition~\ref{prop:finitefamily}). We will prove by induction on $\xi$ that 
$\Big((L^\xi(\Sigma, \vec{k}  ; \upsilon) \cap EV^{<\infty}(\vec{s}))_*\Big)_{\vec{s}_1}^\xi = \{\emptyset\}$ for every $\vec{s}_1\prec \vec{s}$ and $\xi<\omega_1$.
Since 
$(L^1(\Sigma, \vec{k}; \upsilon) \cap EV^{<\infty}(\vec{s}))_* = \{(t) : t\in EV(\vec{s})\} \cup \{\emptyset\}$, we have $\Big((L^1(\Sigma, \vec{k} ; \upsilon) \cap EV^{<\infty}(\vec{s}))_*\Big)_{\vec{s}_1}^1 = \{\emptyset\}$ for every $\vec{s}_1\prec \vec{s}$.

Let $\xi>1$ and assume that 
$\Big((L^\zeta (\Sigma, \vec{k} ; \upsilon) \cap EV^{<\infty}(\vec{s}))_* \Big)_{\vec{s}_1}^\zeta = \{\emptyset\}$ for every $\vec{s}_1\prec \vec{s}$ and $\zeta <\xi$. 
For every $t\in EV(\vec{s})$ with $\min dom(t)=n$, according to 
Proposition~\ref{justification}, we have 
\begin{center}
$(L^\xi(\Sigma, \vec{k} ; \upsilon)\cap EV^{<\infty}(\vec{s})) (t) = L^{\xi_n}(\Sigma, \vec{k} ; \upsilon)\cap EV^{<\infty}(\vec{s}-t)$ for some $\xi_n <\xi.$
\end{center}
Hence, for every $\vec{s}_1\prec \vec{s}$ and $t\in EV(\vec{s}_1)$ with $\min t=n$ we have that 

$\Big((L^\xi(\Sigma, \vec{k} ; \upsilon)\cap EV^{<\infty}(\vec{s}))(t)_* \Big)_{\vec{s}_1}^{\xi_n} 
= \Big((L^{\xi_n}(\Sigma, \vec{k} ; \upsilon)\cap  EV^{<\infty}(\vec{s}-t))_* \Big)_{\vec{s}_1}^{\xi_n} 
= \{\emptyset\}.$
\newline
This gives that 
$(t) \in \Big((L^\xi(\Sigma, \vec{k} ; \upsilon)\cap EV^{<\infty}(\vec{s}))_* \Big)_{\vec{s}_1}^{\xi_n}$.
So, $\emptyset \in \Big((L^\xi(\Sigma, \vec{k} ; \upsilon)\cap EV^{<\infty}(\vec{s}))_* \Big)_{\vec{s}_1}^\xi$, since if $\xi = \zeta+1$, then $(t) \in \Big((L^\xi(\Sigma, \vec{k} ; \upsilon)\cap EV^{<\infty}(\vec{s}))_* \Big)_{\vec{s}_1}^\zeta$ 
for every $t\in EV(\vec{s}_1)$ and if 
$\xi$ is a limit ordinal, then $\emptyset\in \Big((L^\xi(\Sigma, \vec{k} ; \upsilon)\cap EV^{<\infty}(\vec{s}))_* \Big)_{\vec{s}_1}^{\xi_n}$ for every $n\in\nat$ 
and $\sup \xi_n  =\xi$.

If $\{\emptyset\} \ne \Big((L^\xi(\Sigma, \vec{k} ; \upsilon)\cap EV^{<\infty}(\vec{s}))_* \Big)_{\vec{s}_1}^\xi$ for $\vec{s}_1\prec \vec{s}$, 
then there exist $\vec{s}_2\prec \vec{s}_1$ and  $s\in EV(\vec{s}_2)$ such that 
$\Big((L^\xi(\Sigma, \vec{k} ; \upsilon)\cap EV^{<\omega}(\vec{s}))(s)_* \Big)_{\vec{s}_2}^\xi \ne \emptyset$ 
(see Theorem 1.18 in \cite{F2}). 
This is a contradiction to the induction hypothesis. 
Hence, $\Big((L^\xi(\Sigma, \vec{k} ; \upsilon)\cap EV^{<\infty}(\vec{s}))_* \Big)_{\vec{s}_1}^\xi = \{\emptyset\} $ and 
$sO_{\vec{s}_1}((L^\xi(\Sigma, \vec{k} ; \upsilon) \cap EV^{<\infty}(\vec{s}))_* ) 
= \xi+1$ for every $\xi<\omega_1$.
\end{proof}

\begin{cor}\label{cor:tree} 
Let $\xi_1,\xi_2$ be countable ordinals with $\xi_1<\xi_2$ and $\vec{w}\in L^\infty (\Sigma, \vec{k} ; \upsilon)$. Then there exist $\vec{u}\prec \vec{w}$ such that  

$(L^{\xi_1}(\Sigma, \vec{k} ; \upsilon))_* \cap EV^{<\infty}(\vec{u})\subseteq (L^{\xi_2}(\Sigma, \vec{k} ; \upsilon))^* \setminus L^{\xi_2}(\Sigma, \vec{k} ; \upsilon)$.
\end{cor}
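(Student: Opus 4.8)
The plan is to deduce the statement not from the Ramsey machinery of this section but directly from the corresponding domination property of the Schreier system $(\A_\xi)_{\xi<\omega_1}$, transported through the ``minimum of the domain'' map. The two facts I would unwind from the definitions are: for any extracted word $z=T_{p_1}(w_{n_1})\star\cdots\star T_{p_\lambda}(w_{n_\lambda})$ one has $\min dom(z)=\min dom(w_{n_1})$ (since each $T_p$ preserves domains and $\star$ concatenates domains in increasing order); and membership of a finite orderly sequence $\bw=(z_1,\ldots,z_m)$ in $L^\zeta(\Sigma,\vec k;\upsilon)$ depends only on whether $\{\min dom(z_1),\ldots,\min dom(z_m)\}\in\A_\zeta$.

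First I would dispose of the degenerate case $\xi_1=0$ (then $(L^0(\Sigma,\vec k;\upsilon))_*=\{\emptyset\}$ and $\emptyset\in (L^{\xi_2}(\Sigma,\vec k;\upsilon))^*\setminus L^{\xi_2}(\Sigma,\vec k;\upsilon)$ by Remark~\ref{rem1.4}, so any $\vec u\prec\vec w$ works), and assume $1\le\xi_1<\xi_2$. Write $\vec w=(w_n)_{n\in\nat}$ and $M=\{\min dom(w_n):n\in\nat\}\in[\nat]^\omega$. I would then invoke the structural property of the Schreier hierarchy proved in \cite{F3} (and exploited in \cite{FN1}): since $\xi_1<\xi_2$, there is an infinite $N\subseteq M$ such that every subset of an element of $\A_{\xi_1}$ which is contained in $N$ is a proper initial segment of some element of $\A_{\xi_2}$; that is, $\widehat{\A}_{\xi_1}\cap[N]^{<\omega}\subseteq\A_{\xi_2}^*\setminus\A_{\xi_2}$, where $\widehat{\A}_{\xi_1}$ is the family of all subsets of elements of $\A_{\xi_1}$ and $\A_{\xi_2}^*$ the family of all initial segments of elements of $\A_{\xi_2}$. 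Let $\vec u=(u_n)_{n\in\nat}\prec\vec w$ be the subsequence of $\vec w$ picking out, in increasing order, those $w_n$ with $\min dom(w_n)\in N$ (a subsequence is an extraction, each term being $T_0$ of itself), so that $\{\min dom(u_n):n\in\nat\}=N$.

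Now take $\bw=(z_1,\ldots,z_m)\in (L^{\xi_1}(\Sigma,\vec k;\upsilon))_*\cap EV^{<\infty}(\vec u)$ with $\bw\ne\emptyset$ (the empty sequence being covered above). Since $\bw\in EV^{<\infty}(\vec u)$, each $z_j\in EV(\vec u)$, whence $\min dom(z_j)\in N$; and since $\bw\in (L^{\xi_1}(\Sigma,\vec k;\upsilon))_*$, there is $(v_1,\ldots,v_l)\in L^{\xi_1}(\Sigma,\vec k;\upsilon)$ with $z_1,\ldots,z_m\in EV((v_1,\ldots,v_l))$, whence $\min dom(z_j)\in\{\min dom(v_i):1\le i\le l\}$ for every $j$. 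Putting $a_j=\min dom(z_j)$ (so $a_1<\cdots<a_m$, as $z_1<\cdots<z_m$), the set $A=\{a_1,\ldots,a_m\}$ satisfies $A\subseteq\{\min dom(v_i):i\}\in\A_{\xi_1}$ and $A\subseteq N$, hence $A\in\widehat{\A}_{\xi_1}\cap[N]^{<\omega}\subseteq\A_{\xi_2}^*\setminus\A_{\xi_2}$. From $A\notin\A_{\xi_2}$ I immediately get $\bw\notin L^{\xi_2}(\Sigma,\vec k;\upsilon)$. From $A\in\A_{\xi_2}^*$, say $A$ is an initial segment of $t=A\cup\{b_1<\cdots<b_r\}\in\A_{\xi_2}$, I would use the spreading property of $\A_{\xi_2}$ (again from \cite{F3}) to replace $b_1,\ldots,b_r$ by $a_{m+1}<\cdots<a_{m+r}$ with $a_{m+1}>\max dom(z_m)$ and $a_{m+j}\ge b_j$, so that $A\cup\{a_{m+1},\ldots,a_{m+r}\}\in\A_{\xi_2}$; appending to $\bw$ the one-letter variable $\omega$-located words $z_{m+j}$ with $dom(z_{m+j})=\{a_{m+j}\}$ yields an element of $L^{\xi_2}(\Sigma,\vec k;\upsilon)$ having $\bw$ as an initial segment, so $\bw\in (L^{\xi_2}(\Sigma,\vec k;\upsilon))^*$. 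Hence $\bw\in (L^{\xi_2}(\Sigma,\vec k;\upsilon))^*\setminus L^{\xi_2}(\Sigma,\vec k;\upsilon)$, as required.

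The main obstacle is that the proof rests entirely on two properties of the abstract Schreier system which must be quoted from \cite{F3} in exactly the right form: that for $\xi_1<\xi_2$ every infinite set contains an infinite subset $N$ with $\widehat{\A}_{\xi_1}\cap[N]^{<\omega}\subseteq\A_{\xi_2}^*\setminus\A_{\xi_2}$ (both the ``$\subseteq\A_{\xi_2}^*$'' half, expressing that $\A_{\xi_2}$ strictly dominates $\A_{\xi_1}$, and the ``avoids $\A_{\xi_2}$'' half, which needs the thinness of $\A_{\xi_2}$), and that the families $\A_\xi$ are spreading. Everything else — the behaviour of $\min dom$ under $T_p$ and $\star$, and the translation between $L^\zeta(\Sigma,\vec k;\upsilon)$ and $\A_\zeta$ — is a routine bookkeeping exercise with Definitions~\ref{Irecursivethin} and \ref{recursivethinblock}.
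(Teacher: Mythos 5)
Your reduction of the problem to the Schreier families $\A_{\xi_1},\A_{\xi_2}$ via the map $\bw\mapsto\{\min dom(z_1),\ldots,\min dom(z_m)\}$ is sound (the bookkeeping with $T_p$, $\star$ and Definition~\ref{recursivethinblock} is exactly as you say), and it is a genuinely different route from the paper's: the paper applies Theorem~\ref{thm:block-Ramsey} directly to the tree $(L^{\xi_1}(\Sigma,\vec k;\upsilon))_*$, rewrites the second horn via Proposition~\ref{prop:tree}, and kills the first horn by the strong Cantor--Bendixson index computation of Proposition~\ref{prop:Cantor-Bendix} ($\xi_2+1\le\xi_1+1$). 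Your version outsources all of that to the $\nat$-level analogue $\widehat{\A}_{\xi_1}\cap[N]^{<\omega}\subseteq\A_{\xi_2}^*\setminus\A_{\xi_2}$, which is true and is proved in the earlier papers by the very same dichotomy-plus-index argument one level down; that is legitimate but buys nothing in substance, and carries the usual risk of quoting an external result in exactly the form you need (you need the hereditary closure $\widehat{\A}_{\xi_1}$, not just $\A_{\xi_1}$ or its initial segments).

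There is, however, one step that fails as written: the families $\A_\xi$ of Definition~\ref{Irecursivethin} are \emph{not} spreading, not even when the initial segment $A$ is held fixed and only the tail is pushed to the right. For instance $\A_\omega=\{s:|s|=\min s\}$, so $\{2,3\}\in\A_\omega$ while $\{3,4\}\notin\A_\omega$; and in $\A_{\omega^2}$ one has $\{2,3\}\cup\{4,5,6,7\}\in\A_{\omega^2}$ but $\{2,3\}\cup\{5,6,7,8\}\notin\A_{\omega^2}$ (the forced second block $\{5,6,7,8\}$ has cardinality $4\ne 5$), even though each new element dominates the old one. Worse, for $A=\{2,3\}$ there is \emph{no} $4$-element tail above $4$ that works, so a cardinality-preserving replacement is impossible in general: the extension must be allowed to grow. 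The correct way to get $\bw\in(L^{\xi_2}(\Sigma,\vec k;\upsilon))^*$ from $A\in\A_{\xi_2}^*\setminus\A_{\xi_2}$ is via canonical representation plus thinness (Propositions~\ref{prop:canonicalrep} and~\ref{prop:thinfamily}): the infinite sequence $A\cup\{K+1,K+2,\ldots\}$ with $K=\max dom(z_m)$ has a unique initial segment $C\in\A_{\xi_2}$, and since $A$ is an initial segment of an element of $\A_{\xi_2}$, thinness forbids $C\subsetneq A$, so $C=A\cup B$ with $\emptyset\ne B\subseteq\{K+1,K+2,\ldots\}$; appending singleton variable words at the positions of $B$ then exhibits $\bw$ as an initial segment of an element of $L^{\xi_2}(\Sigma,\vec k;\upsilon)$. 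With that repair (and the external citation granted), your argument goes through.
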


\begin{proof} 
The family $(L^{\xi_1}(\Sigma, \vec{k}; \upsilon))_* \subseteq L^{<\infty}(\Sigma, \vec{k} ; \upsilon)$ is a tree. According to Theorem~\ref{thm:block-Ramsey} and Proposition~\ref{prop:tree} there exists $\vec{u}\prec\vec{w}$ such that: 

either $L^{\xi_2}(\Sigma, \vec{k} ; \upsilon) \cap EV^{<\infty}(\vec{u})\subseteq (L^{\xi_1}(\Sigma, \vec{k} ; \upsilon))_*$, 

or $(L^{\xi_1}(\Sigma, \vec{k} ; \upsilon))_* \cap EV^{<\infty}(\vec{u}) \subseteq (L^{\xi_2}(\Sigma, \vec{k} ; \upsilon))^* \setminus L^{\xi_2}(\Sigma, \vec{k} ; \upsilon)$.
\newline
The first alternative of the dichotomy is impossible, since, according to Proposition~\ref{prop:Cantor-Bendix}
\newline
$\xi_2 +1 =$ $sO_{\vec{u}}((L^{\xi_2}(\Sigma, \vec{k} ; \upsilon) \cap EV^{<\infty}(\vec{u}))_*) \le$ $sO_{\vec{u}} ((L^{\xi_1}(\Sigma, \vec{k} ; \upsilon))_*) = \xi_1 +1$. 
\end{proof}

The following Theorem~\ref{block-NashWilliams2}, the main result in this Section, refines  Theorem~\ref{thm:block-Ramsey} in case the partition family is a tree.

\begin{defn}\label{def:blockNW} 
Let $\F \subseteq L^{<\infty}(\Sigma, \vec{k} ; \upsilon)$ be a family of finite orderly sequences of variable $\omega$-located words over an infinite countable alphabet $\Sigma$, ordered according to the natural numbers, dominated by an inccreasing sequence $\vec{k}=(k_n)_{n\in\nat}\subseteq \nat$. We set 

$\F_h = \{\bw \in \F: EV^{<\infty}(\bw)\subseteq\F \}\cup \{\emptyset\}$.

Of course, $\F_h$ is the largest subfamily of $\F\cup \{\emptyset\}$ which is hereditary.
\end{defn}
\begin{thm}
\label{block-NashWilliams2}
Let $\Sigma=\{\alpha_1, \alpha_2, \ldots \}$ be an infinite countable alphabet, $\upsilon \notin \Sigma$ a variable, $\vec{k}=(k_n)_{n\in\nat}\subseteq \nat$ an increasing sequence, $\F \subseteq L^{<\infty}(\Sigma, \vec{k} ; \upsilon)$ a family of finite orderly sequences of variable $\omega$-located words which is a tree and $\vec{w} \in L^\infty (\Sigma, \vec{k} ; \upsilon)$ an infinite orderly sequence of variable $\omega$-located words. Then we have the following cases:
\newline
\noindent {\bf [Case 1]} The family $\F_h\cap EV^{<\infty}(\vec{w})$ is not pointwise closed. 

Then, there exists $\vec{u}\prec \vec{w}$ such that 
$EV^{<\infty}(\vec{u})\subseteq \F$.
\newline
\noindent {\bf [Case 2]}
The family $\F_h \cap EV^{<\infty}(\vec{w})$ is pointwise closed.

Then, setting
\begin{center}
$\zeta_{\vec{w}}^{\F} = \sup \{$$sO_{\vec{u}} (\F_h) : \vec{u}\prec \vec{w}\}$ ,
\end{center}
which is a countable ordinal, the following subcases obtain:
\begin{itemize}
\item[2(i)] If $\xi+1 <\zeta_{\vec{w}}^{\F}$, then there exists $\vec{u}\prec \vec{w}$ 
such that 
\begin{center}
$L^\xi(\Sigma, \vec{k} ; \upsilon) \cap EV^{<\infty}(\vec{u})\subseteq \F$ ;
\end{center}
\item[2(ii)] if $\xi+1>\xi>\zeta_{\vec{w}}^{\F}$, then for every $\vec{w}_1 \prec \vec{w}$ there exists 
$\vec{u}\prec \vec{w}_1$ such that 

$L^\xi(\Sigma, \vec{k} ; \upsilon) \cap EV^{<\infty}(\vec{u})\subseteq L^{<\infty}(\Sigma, \vec{k} ; \upsilon)\setminus \F$;

(equivalently $\F\cap EV^{<\infty}(\vec{u}) \subseteq (L^\xi(\Sigma, \vec{k} ; \upsilon))^* \setminus L^\xi(\Sigma, \vec{k}; \upsilon)$) ; and  
\item[2(iii)] if $\xi+1 = \zeta_{\vec{w}}^{\F}$ or $\xi = \zeta_{\vec{w}}^{\F}$, then there exists $\vec{u}\prec \vec{w}$ such that 
\item[{}] either $L^\xi(\Sigma, \vec{k} ; \upsilon) \cap EV^{<\infty}(\vec{u})\subseteq \F\ \text{ or }\  
L^\xi(\Sigma, \vec{k} ; \upsilon) \cap EV^{<\infty}(\vec{u})\subseteq L^{<\infty}(\Sigma, \vec{k} ; \upsilon)\setminus \F.\\$ 
\end{itemize}
\end{thm}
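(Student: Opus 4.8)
The plan is to treat the three subcases of Case 2 by transferring everything to the strong Cantor–Bendixson index of the hereditary tree $\F_h$, using Theorem~\ref{thm:block-Ramsey}, Proposition~\ref{prop:tree}, and the index computation in Proposition~\ref{prop:Cantor-Bendix} (together with Corollary~\ref{cor:tree}). First I would dispose of Case~1: if $\F_h\cap EV^{<\infty}(\vec{w})$ is not pointwise closed, then by Proposition~\ref{prop:finitefamily}(ii) (applied to the hereditary family $\F_h\cap EV^{<\infty}(\vec{w})$, which is hereditary ``inside $\vec{w}$'') there is $\vec{u}\prec\vec{w}$ with $EV^{<\infty}(\vec{u})\subseteq\F_h\subseteq\F$; a small bookkeeping point is that hereditariness of $\F_h$ only guarantees $EV^{<\infty}(\bw)\subseteq\F_h$ for $\bw\in\F_h$, so one should intersect with $EV^{<\infty}(\vec{w})$ first and note that this intersection is genuinely a hereditary family of the required form. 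In Case~2, $\F_h\cap EV^{<\infty}(\vec{u})$ is hereditary and pointwise closed for every $\vec{u}\prec\vec{w}$, so $sO_{\vec{u}}(\F_h)$ is a well-defined countable successor ordinal (Remark~\ref{rem:Cantor-Bendix}(i)); since by Remark~\ref{rem:Cantor-Bendix}(v) the index is monotone nondecreasing under $\prec$ and bounded above by $O(\F_h)$, the supremum $\zeta_{\vec{w}}^{\F}$ is a countable ordinal and is attained ``stably'' on some $\vec{u}\prec\vec{w}$ (i.e.\ $sO_{\vec{u}_1}(\F_h)=\zeta_{\vec{w}}^{\F}$ for all $\vec{u}_1\prec\vec{u}$, otherwise one could strictly increase the index infinitely often, contradicting boundedness).

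For subcase 2(i), where $\xi+1<\zeta_{\vec{w}}^{\F}$: pass to $\vec{u}_0\prec\vec{w}$ with $sO_{\vec{u}_0}(\F_h)>\xi+1$, apply Theorem~\ref{thm:block-Ramsey} to the tree $\F$ (note $\F=\F^*$) to get $\vec{u}\prec\vec{u}_0$ with either $L^\xi\cap EV^{<\infty}(\vec{u})\subseteq\F$ (done) or, by Proposition~\ref{prop:tree}, $\F\cap EV^{<\infty}(\vec{u})\subseteq (L^\xi)^*\setminus L^\xi$. The second alternative is impossible: intersecting with the hereditary core, $\F_h\cap EV^{<\infty}(\vec{u})\subseteq (L^\xi)^*\setminus L^\xi\subseteq (L^\xi)_*$... — here I would instead argue via the index directly, namely $\F_h\cap EV^{<\infty}(\vec{u})\subseteq (L^\xi(\Sigma,\vec{k};\upsilon))_*$ would force, by Remark~\ref{rem:Cantor-Bendix}(iii) and Proposition~\ref{prop:Cantor-Bendix}, $sO_{\vec{u}}(\F_h)\le sO_{\vec{u}}((L^\xi)_*)=\xi+1$, contradicting $sO_{\vec{u}_0}(\F_h)>\xi+1$ and monotonicity (Remark~\ref{rem:Cantor-Bendix}(v)). (The needed containment $\F\cap EV^{<\infty}(\vec{u})\subseteq (L^\xi)^*\setminus L^\xi$ does imply $\F_h\cap EV^{<\infty}(\vec{u})\subseteq (L^\xi)_*$ because any $\bw$ in the left side is an initial segment of something in $L^\xi$, hence lies in $(L^\xi)_*$ after one also uses that $\F_h$ being hereditary makes it consist of ``extracted'' subsequences; this is the one spot where I must be careful that ``initial segment'' and ``extracted subword'' interact correctly, and it is the main technical obstacle of the proof.) For subcase 2(ii), where $\zeta_{\vec{w}}^{\F}<\xi<\xi+1$: given $\vec{w}_1\prec\vec{w}$, apply Theorem~\ref{thm:block-Ramsey} and Proposition~\ref{prop:tree} to get $\vec{u}\prec\vec{w}_1$ with either $L^\xi\cap EV^{<\infty}(\vec{u})\subseteq\F$ or $\F\cap EV^{<\infty}(\vec{u})\subseteq (L^\xi)^*\setminus L^\xi$; the first alternative is impossible because it would give $EV^{<\infty}(\vec{u})\cap L^\xi\subseteq\F$, hence (after passing to the hereditary core on an extraction, which one arranges by a further application of Theorem~\ref{thm:block-Ramsey} to the tree $\F_h$, or by noting $L^\xi\subseteq (L^\xi)_*$ and that the hereditary family generated sits inside $\F_h$) one would get $sO_{\vec{u}}(\F_h)\ge sO_{\vec{u}}((L^\xi)_*)=\xi+1>\zeta_{\vec{w}}^{\F}$, contradicting the definition of the supremum and Remark~\ref{rem:Cantor-Bendix}(v).

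Subcase 2(iii), where $\xi=\zeta_{\vec{w}}^{\F}$ or $\xi+1=\zeta_{\vec{w}}^{\F}$, needs no extra work: it is exactly the dichotomy furnished by Theorem~\ref{thm:block-Ramsey} applied to $\F$ and $\vec{w}$, and I would simply quote it, recording the equivalent reformulation of the second horn via Proposition~\ref{prop:tree}. Finally I would assemble the four cases into the statement, checking that the ordinal $\zeta_{\vec{w}}^{\F}$ is well-defined precisely under the Case~2 hypothesis (pointwise closedness of $\F_h\cap EV^{<\infty}(\vec{w})$ guarantees all the indices $sO_{\vec{u}}(\F_h)$ are defined and countable, by Proposition~\ref{prop:finitefamily} and Remark~\ref{rem:Cantor-Bendix}(i)). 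The main obstacle, as flagged above, is the bookkeeping that links the ``tree''/``initial segment'' language coming out of Theorem~\ref{thm:block-Ramsey} and Proposition~\ref{prop:tree} with the ``hereditary''/``extracted subword'' language in which the Cantor–Bendixson index $sO_{\vec{s}}$ is defined; concretely, I must verify that whenever $\F\cap EV^{<\infty}(\vec{u})\subseteq(L^\xi)^*\setminus L^\xi$ one can conclude $sO_{\vec{u}}(\F_h)\le\xi+1$, and conversely that $L^\xi\cap EV^{<\infty}(\vec{u})\subseteq\F$ forces $sO_{\vec{u}}(\F_h)\ge\xi+1$, both of which rest on Proposition~\ref{prop:Cantor-Bendix} and on $\F_h$ being the largest hereditary subfamily.
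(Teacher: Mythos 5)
Your overall strategy coincides with the paper's: Case~1 via Proposition~\ref{prop:finitefamily}, Case~2 via the dichotomy of Theorem~\ref{thm:block-Ramsey} (in the tree form of Proposition~\ref{prop:tree}) combined with the index computation $sO_{\vec{u}}\big((L^\xi(\Sigma,\vec{k};\upsilon))_*\big)=\xi+1$ of Proposition~\ref{prop:Cantor-Bendix}. Your treatment of Case~1, of subcase 2(i) and of subcase 2(iii) is essentially the paper's argument; in 2(i) the containment you worry about is immediate from $\F_h\subseteq\F$ together with $(L^\xi(\Sigma,\vec{k};\upsilon))^*\subseteq(L^\xi(\Sigma,\vec{k};\upsilon))_*$ (an initial segment of $\bu$ is an extracted subsequence of $\bu$), which is exactly the chain the paper writes down.

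The genuine gap is in subcase 2(ii), and it is precisely the obstacle you flag without resolving. You rule out the alternative $L^\xi(\Sigma,\vec{k};\upsilon)\cap EV^{<\infty}(\vec{u})\subseteq\F$ by claiming it forces $sO_{\vec{u}}(\F_h)\ge\xi+1$, justified by ``the hereditary family generated sits inside $\F_h$.'' That is false in general: $\F_h$ consists of those $\bw\in\F$ with $EV^{<\infty}(\bw)\subseteq\F$, and an element $\bw$ of $L^\xi(\Sigma,\vec{k};\upsilon)\cap EV^{<\infty}(\vec{u})$ may well have extracted subsequences (not lying in $L^\xi$) outside $\F$; so there is no reason why $L^\xi(\Sigma,\vec{k};\upsilon)\cap EV^{<\infty}(\vec{u})$, let alone its hereditary closure, should be contained in $\F_h$, and the index bound does not follow. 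Your alternative hedge (``a further application of Theorem~\ref{thm:block-Ramsey} to the tree $\F_h$'') points in the right direction but is not an argument. The paper fills this hole in three steps: (a) a preliminary application of the dichotomy to $\F_h$ at level $\zeta_{\vec{w}}^{\F}$, whose first horn is killed by Proposition~\ref{prop:Cantor-Bendix}, yielding $L^{\zeta_{\vec{w}}^{\F}}(\Sigma,\vec{k};\upsilon)\cap EV^{<\infty}(\vec{u}_1)\subseteq L^{<\infty}(\Sigma,\vec{k};\upsilon)\setminus\F_h$; (b) the identity $(L^\xi(\Sigma,\vec{k};\upsilon))^*\cap EV^{<\infty}(\vec{u})=(L^\xi(\Sigma,\vec{k};\upsilon)\cap EV^{<\infty}(\vec{u}))^*$, via the canonical representation of Proposition~\ref{prop:canonicalrep} and $\F^*=\F$, to upgrade the assumed containment to $(L^\xi(\Sigma,\vec{k};\upsilon))^*\cap EV^{<\infty}(\vec{u})\subseteq\F$; and (c) Corollary~\ref{cor:tree}, which on a further extraction $\vec{t}\prec\vec{u}$ places the \emph{hereditary} family $(L^{\zeta_{\vec{w}}^{\F}}(\Sigma,\vec{k};\upsilon))_*\cap EV^{<\infty}(\vec{t})$ inside $(L^\xi(\Sigma,\vec{k};\upsilon))^*\cap EV^{<\infty}(\vec{u})\subseteq\F$, hence inside $\F_h$, contradicting (a). Without step (c) — the bridge from the tree closure $(\cdot)^*$ at level $\xi$ to the hereditary closure $(\cdot)_*$ at the lower level $\zeta_{\vec{w}}^{\F}$ — there is no way to manufacture a hereditary subfamily of $\F$ of large index, and your sketch of 2(ii) does not close.
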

\begin{proof}
\noindent{\bf [Case 1]}
If the hereditary family $\F_h \cap EV^{<\infty}(\vec{w})$ is not pointwise 
closed, then, according to Proposition~\ref{prop:finitefamily}, there exists $\vec{u}\in L^\infty (\Sigma, \vec{k}; \upsilon)$ such that $EV^{<\infty}(\vec{u})\subseteq \F_h\cap EV^{<\infty}(\vec{w})\subseteq \F$. Of course, $\vec{u}\prec \vec{w}$. 

\noindent{\bf [Case 2]} 
If the hereditary family $\F_h \cap EV^{<\infty}(\vec{w})$ is pointwise closed, then $\zeta_{\vec{w}}^{\F}$ 
is a countable ordinal, since the ``usual'' Cantor-Bendixson 
index $O(\F_h)$ of $\F_h$ into $\{0,1\}^{[D]^{<\omega}}$ is countable 
(Remark~\ref{rem:Cantor-Bendix}(i)) and also $sO_{\vec{u}}(\F_h)\leq O(\F_h)$ for every $\vec{u}\prec \vec{w}$.

2(i) 
Let $\xi+1<\zeta_{\vec{w}}^{\F}$. Then there exists $\vec{u}_1\prec\vec{w}$ such that 
$\xi+1 <$ $sO_{\vec{u}_1}(\F_h)$. 
According to Theorem~\ref{thm:block-Ramsey} and Proposition~\ref{prop:tree}, 
there exists $\vec{u}\prec \vec{u}_1$ such that 

either $L^\xi(\Sigma, \vec{k} ; \upsilon) \cap EV^{<\infty}(\vec{u})\subseteq \F_h\subseteq \F$, 

or $\F_h \cap EV^{<\infty}(\vec{u}) \subseteq (L^\xi(\Sigma, \vec{k} ; \upsilon))^* \setminus L^\xi(\Sigma, \vec{k} ; \upsilon)\subseteq (L^\xi(\Sigma, \vec{k} ; \upsilon))^* \subseteq (L^\xi(\Sigma, \vec{k} ; \upsilon))_*$.
\newline
The second alternative is impossible. 
Indeed, if $\F_h \cap EV^{<\infty}(\vec{u}) \subseteq (L^\xi(\Sigma, \vec{k} ; \upsilon))_*$, then, according 
to Remark ~\ref{rem:Cantor-Bendix} and Proposition~\ref{prop:Cantor-Bendix}, 

$sO_{\vec{u}_1}(\F_h) \leq$ $sO_{\vec{u}} (\F_h) =$ $sO_{\vec{u}}(\F_h\cap EV^{<\infty}(\vec{u})) \leq$ $ sO_{\vec{u}}((L^\xi(\Sigma, \vec{k} ; \upsilon))_*) = \xi+1$; 
\newline
a contradiction. Hence, $L^\xi(\Sigma, \vec{k} ; \upsilon) \cap EV^{<\infty}(\vec{u})\subseteq \F$.

2(ii)
Let $\xi +1>\xi >\zeta_{\vec{w}}^{\F}$ and $\vec{w}_1\prec \vec{w}$. According to Theorem~\ref{thm:block-Ramsey}, there exists $\vec{u_1}\prec \vec{w}_1$ such that 

 either $L^{\zeta_{\vec{w}}^{\F}}(\Sigma, \vec{k} ; \upsilon) \cap EV^{<\infty}(\vec{u}_1)\subseteq \F_h$, or  
$L^{\zeta_{\vec{w}}^{\F}}(\Sigma, \vec{k} ; \upsilon) \cap EV^{<\infty}(\vec{u_1})\subseteq L^{<\infty}(\Sigma, \vec{k} ; \upsilon)\setminus \F_h$.
\newline
The first alternative is impossible. 
Indeed, if $L^{\zeta_{\vec{w}}^{\F}}(\Sigma, \vec{k} ; \upsilon) \cap EV^{<\infty}(\vec{u}_1)\subseteq \F_h$, then, according 
to Remark ~\ref{rem:Cantor-Bendix} and Proposition~\ref{prop:Cantor-Bendix}, 
we have that 

${\zeta_{\vec{w}}^{\F}}+1 =$ $sO_{\vec{u}_1} ((L^{\zeta_{\vec{w}}^{\F}}(\Sigma, \vec{k} ; \upsilon) \cap EV^{<\infty}(\vec{u}_1))_*) \le$ $sO_{\vec{u}_1} 
(\F_h) \le \zeta_{\vec{w}}^{\F}$ ;
\newline
a contradiction. Hence, 
\begin{equation}\label{eq:block-NashWilliams} 
L^{\zeta_{\vec{w}}^{\F}}(\Sigma, \vec{k} ; \upsilon) \cap EV^{<\infty}(\vec{u}_1)\subseteq L^{<\infty}(\Sigma, \vec{k} ; \upsilon)\setminus \F_h\ .
\end{equation}
According to Theorem~\ref{thm:block-Ramsey},
there exists $\vec{u}\prec \vec{u}_1$ such that 

 either $L^\xi(\Sigma, \vec{k} ; \upsilon) \cap EV^{<\infty}(\vec{u})\subseteq \F$,
\quad or\quad  
$L^\xi(\Sigma, \vec{k} ; \upsilon) \cap EV^{<\infty}(\vec{u})\subseteq L^{<\infty}(\Sigma, \vec{k} ; \upsilon)\setminus \F$.
\newline
We claim that the first alternative does not hold. 
Indeed, if $L^\xi(\Sigma, \vec{k} ; \upsilon) \cap EV^{<\infty}(\vec{u})\subseteq \F$, then 
$(L^\xi(\Sigma, \vec{k} ; \upsilon) \cap EV^{<\infty}(\vec{u}))^* \subseteq \F^* = \F$. 
Using the canonical representation of every infinite orderly sequence of variable located words 
with respect to $L^\xi(\Sigma, \vec{k} ; \upsilon)$ (Proposition~\ref{prop:canonicalrep}) it is 
easy to check that 

$(L^\xi(\Sigma, \vec{k} ; \upsilon))^* \cap EV^{<\infty}(\vec{u}) = (L^\xi(\Sigma, \vec{k} ; \upsilon) \cap EV^{<\infty}(\vec{u}))^*$ .
\newline
Hence, $(L^\xi(\Sigma, \vec{k} ; \upsilon))^* \cap EV^{<\infty}(\vec{u})\subseteq \F$.

Since $\xi>\zeta_{\vec{w}}^{\F}$, according to Corollary~\ref{cor:tree}, there exists 
$\vec{t}\prec \vec{u}$ such that 

$(L^{\zeta_{\vec{w}}^{\F}}(\Sigma, \vec{k} ; \upsilon))_*\cap EV^{<\infty}(\vec{t}) \subseteq 
(L^\xi(\Sigma, \vec{k} ; \upsilon))^* \cap EV^{<\infty}(\vec{u})\subseteq \F$.
\newline
So, $(L^{\zeta_{\vec{w}}^{\F}}(\Sigma, \vec{k} ; \upsilon))_*\cap EV^{<\infty}(\vec{t}) \subseteq \F_h$. 
This is a contradiction to the relation \eqref{eq:block-NashWilliams}. 
Hence, $L^\xi(\Sigma, \vec{k} ; \upsilon) \cap EV^{<\infty}(\vec{u})\subseteq L^{<\infty}(\Sigma, \vec{k} ; \upsilon)\setminus \F$ and  $\F\cap EV^{<\infty}(\vec{u}) \subseteq (L^\xi(\Sigma, \vec{k} ; \upsilon))^* \setminus L^\xi(\Sigma, \vec{k} ; \upsilon)$. 

2(iii)
In the cases $\zeta_{\vec{w}}^{\F} = \xi+1$ or $\zeta_{\vec{w}}^{\F} = \xi$, we 
use Theorem~\ref{thm:block-Ramsey}.
\end{proof}

The following immediate corollary to Theorem~\ref{block-NashWilliams2} is more useful for applications. 
  
\begin{cor}
\label{cor:tree2}
Let $\Sigma=\{\alpha_1, \alpha_2, \ldots \}$, $\upsilon \notin \Sigma$, $\vec{k}=(k_n)_{n\in\nat}\subseteq \nat$ an increasing sequence, $\F \subseteq L^{<\infty}(\Sigma, \vec{k} ; \upsilon)$ which is a 
tree and let $\vec{w} \in L^\infty (\Sigma, \vec{k} ; \upsilon)$. Then
\begin{itemize}
\item[(i)] either 
there exists $\vec{u}\prec \vec{w}$ such that $EV^{<\infty}(\vec{u})\subseteq \F$,
\item[(ii)] or for every countable ordinal $\xi > \zeta_{\vec{w}}^{\F}$ there exists 
$\vec{u}\prec \vec{w}$, such that for every $\vec{u}_1 \prec \vec{u}$ the unique initial segment of $\vec{u}_1$ 
which is an element of $L^\xi(\Sigma, \vec{k} ; \upsilon)$ belongs to $ L^{<\infty}(\Sigma, \vec{k} ; \upsilon)\setminus \F$.
\end{itemize}
\end{cor}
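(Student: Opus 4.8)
The plan is to read Corollary~\ref{cor:tree2} off Theorem~\ref{block-NashWilliams2}: its Case~1 is literally alternative (i), while subcase~2(ii) yields alternative (ii) once the conclusion there is rephrased in terms of initial segments via the canonical representation with respect to $L^\xi(\Sigma, \vec{k}; \upsilon)$. Since ``$\F_h \cap EV^{<\infty}(\vec{w})$ is pointwise closed'' and its negation exhaust all possibilities, one of the two alternatives will always be reached.

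First I would apply Theorem~\ref{block-NashWilliams2} to the tree $\F$ and the sequence $\vec{w}$. If the hereditary family $\F_h \cap EV^{<\infty}(\vec{w})$ is not pointwise closed, then Case~1 of that theorem furnishes an extraction $\vec{u} \prec \vec{w}$ with $EV^{<\infty}(\vec{u}) \subseteq \F$, which is exactly alternative (i), and nothing further is needed. So I would then assume that $\F_h \cap EV^{<\infty}(\vec{w})$ is pointwise closed, which places us in Case~2 and makes $\zeta_{\vec{w}}^{\F}$ a well-defined countable ordinal; I claim alternative (ii) holds in this case.

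Next I would fix a countable ordinal $\xi > \zeta_{\vec{w}}^{\F}$. As the relation $\xi+1 > \xi$ is automatic, the hypothesis of subcase~2(ii) of Theorem~\ref{block-NashWilliams2} is met, so applying it with $\vec{w}_1 = \vec{w}$ yields an extraction $\vec{u} \prec \vec{w}$ with
\[
L^\xi(\Sigma, \vec{k}; \upsilon) \cap EV^{<\infty}(\vec{u}) \subseteq L^{<\infty}(\Sigma, \vec{k}; \upsilon) \setminus \F .
\]
Now given any $\vec{u}_1 \prec \vec{u}$, Proposition~\ref{prop:canonicalrep}(i) provides a canonical representation of $\vec{u}_1$ with respect to $L^\xi(\Sigma, \vec{k}; \upsilon)$, so some initial segment of $\vec{u}_1$ lies in $L^\xi(\Sigma, \vec{k}; \upsilon)$, and by the thinness of $L^\xi(\Sigma, \vec{k}; \upsilon)$ (Proposition~\ref{prop:thinfamily}) this initial segment is unique; call it $\bs$. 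From $\vec{u}_1 \prec \vec{u}$ we get $EV(\vec{u}_1) \subseteq EV(\vec{u})$, whence $\bs \in EV^{<\infty}(\vec{u}_1) \subseteq EV^{<\infty}(\vec{u})$, and therefore $\bs \in L^\xi(\Sigma, \vec{k}; \upsilon) \cap EV^{<\infty}(\vec{u}) \subseteq L^{<\infty}(\Sigma, \vec{k}; \upsilon) \setminus \F$. Thus the unique initial segment of $\vec{u}_1$ belonging to $L^\xi(\Sigma, \vec{k}; \upsilon)$ lies in $L^{<\infty}(\Sigma, \vec{k}; \upsilon) \setminus \F$, which is precisely alternative (ii).

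Since Theorem~\ref{block-NashWilliams2} does all the real work, I do not expect a genuine obstacle; the one point worth care is that ``the unique initial segment of $\vec{u}_1$ which is an element of $L^\xi$'' is well-defined, and this is exactly where the existence half of the canonical representation (Proposition~\ref{prop:canonicalrep}) and the thinness of $L^\xi(\Sigma, \vec{k}; \upsilon)$ (Proposition~\ref{prop:thinfamily}) are both used, together with the trivial inclusion $EV^{<\infty}(\vec{u}_1) \subseteq EV^{<\infty}(\vec{u})$ that follows from $\vec{u}_1 \prec \vec{u}$.
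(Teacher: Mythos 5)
Your proposal is correct and follows exactly the route the paper intends: the paper states Corollary~\ref{cor:tree2} as an immediate consequence of Theorem~\ref{block-NashWilliams2} without written proof, and your argument (Case~1 giving alternative (i), subcase~2(ii) with $\vec{w}_1=\vec{w}$ giving alternative (ii), translated into the initial-segment formulation via Proposition~\ref{prop:canonicalrep} and the thinness of $L^\xi(\Sigma,\vec{k};\upsilon)$) is precisely the fleshed-out version of that deduction.
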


Theorem~\ref{block-NashWilliams2} implies the following Nash-Williams type partition theorem for variable $\omega$-located words.

\begin{thm}
[\textsf{Partition theorem for infinite orderly sequences of variable $\omega$-located words}] 
\label{cor:blockNW}
Let $\Sigma=\{\alpha_1, \alpha_2, \ldots \}$ be an infinite countable alphabet, $\upsilon \notin \Sigma$ a variable, $\vec{k}=(k_n)_{n\in\nat}\subseteq \nat$ an increasing sequence. If $\U \subseteq L^{\infty}(\Sigma, \vec{k} ; \upsilon)$ is a pointwise closed family of infinite orderly sequences of variable $\omega$-located words and $\vec{w} \in L^\infty (\Sigma, \vec{k} ; \upsilon)$ an infinite orderly sequence of variable $\omega$-located words, then there exists $\vec{u}\prec \vec{w}$ such that 

either  $EV^{\infty}(\vec{u})\subseteq \U$, or $EV^{\infty}(\vec{u})\subseteq L^{\infty}(\Sigma, \vec{k} ; \upsilon) \setminus \U$.
\end{thm}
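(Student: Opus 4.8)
The plan is to deduce Theorem~\ref{cor:blockNW} from Corollary~\ref{cor:tree2} by passing from the pointwise closed family $\U$ of infinite sequences to the tree of its finite approximations. Concretely, set
\[
\F=\{\bw=(w_1,\ldots,w_l)\in L^{<\infty}(\Sigma,\vec k;\upsilon) : \bw\propto\vec s\ \text{for some}\ \vec s\in\U\}\cup\{\emptyset\}.
\]
Then $\F$ is a tree, i.e.\ $\F^*=\F$: if $\bw\propto\bu$ and $\bu\propto\vec s\in\U$ then $\bw\propto\vec s$, so $\bw\in\F$. The heart of the reduction is the following two-way link between $\F$ and $\U$.

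\emph{Claim (a).} If $\vec u\in L^{\infty}(\Sigma,\vec k;\upsilon)$ and $EV^{<\infty}(\vec u)\subseteq\F$, then $EV^{\infty}(\vec u)\subseteq\U$. This is the step that needs real work, and I expect it to be the only delicate point. Fix $\vec s=(s_n)_{n\in\nat}\in EV^{\infty}(\vec u)$. For each $l\in\nat$ we have $(s_1,\ldots,s_l)\in EV^{<\infty}(\vec u)\subseteq\F$, so there is $\vec t^{(l)}=(t^{(l)}_n)_{n\in\nat}\in\U$ with $t^{(l)}_i=s_i$ for $1\le i\le l$. I claim $\vec t^{(l)}\to\vec s$ in the product topology of $\{0,1\}^{[D]^{<\omega}}$. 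Fix a word $v$ and set $N=\max dom(v)$. Since $(s_n)$ is orderly, $\max dom(s_l)\to\infty$; once $\max dom(s_l)\ge N$, every $t^{(l)}_j$ with $j>l$ has $\min dom(t^{(l)}_j)>\max dom(s_l)\ge N$ and hence $t^{(l)}_j\ne v$, so $v\in\sigma(\vec t^{(l)})$ iff $v\in\{s_1,\ldots,s_l\}$; letting $l\to\infty$ this stabilizes to the value $x_{\sigma(\vec s)}(v)$. Thus $x_{\sigma(\vec t^{(l)})}\to x_{\sigma(\vec s)}$, and since $\U$ is pointwise closed and each $\vec t^{(l)}\in\U$, we get $\vec s\in\U$.

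\emph{Claim (b).} If $\vec s\in\U$, then every finite initial segment of $\vec s$ lies in $\F$ by definition; equivalently, if some finite initial segment of an infinite orderly sequence is not in $\F$, that sequence does not belong to $\U$.

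Now apply Corollary~\ref{cor:tree2} to the tree $\F$ and the sequence $\vec w$. If its alternative (i) holds, there is $\vec u\prec\vec w$ with $EV^{<\infty}(\vec u)\subseteq\F$, so $EV^{\infty}(\vec u)\subseteq\U$ by Claim (a): this is the first horn. Otherwise alternative (ii) holds, and applying it with $\xi=\zeta_{\vec w}^{\F}+1>\zeta_{\vec w}^{\F}$ yields $\vec u\prec\vec w$ such that for every $\vec u_1\prec\vec u$ the unique initial segment of $\vec u_1$ belonging to $L^\xi(\Sigma,\vec k;\upsilon)$ --- which exists by the canonical representation of Proposition~\ref{prop:canonicalrep} and the thinness of $L^\xi(\Sigma,\vec k;\upsilon)$ (Proposition~\ref{prop:thinfamily}) --- lies in $L^{<\infty}(\Sigma,\vec k;\upsilon)\setminus\F$. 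Given $\vec s\in EV^{\infty}(\vec u)$ we have $\vec s\prec\vec u$, so this $L^\xi$-initial segment of $\vec s$ is outside $\F$, whence $\vec s\notin\U$ by Claim (b); thus $EV^{\infty}(\vec u)\subseteq L^{\infty}(\Sigma,\vec k;\upsilon)\setminus\U$, the second horn. Apart from the convergence argument in Claim (a), the whole proof is a direct translation through Corollary~\ref{cor:tree2}.
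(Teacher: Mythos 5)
Your proposal is correct and follows essentially the same route as the paper: both pass to the tree $\F_{\U}$ of finite initial segments of members of $\U$, apply Corollary~\ref{cor:tree2}, and use pointwise closedness of $\U$ to convert $EV^{<\infty}(\vec u)\subseteq\F_{\U}$ into $EV^{\infty}(\vec u)\subseteq\U$. The only difference is that you spell out the pointwise-convergence argument in Claim (a), which the paper asserts without detail.
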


\begin{proof} 
Let $\F_{\U} = \{\bw \in L^{<\infty}(\Sigma, \vec{k} ; \upsilon)$: there exists $\vec{s}\in \U$ 
such that $\bw\propto \vec{s}\}$.
Since the family $\F_{\U}$ is a tree, we can use Corollary~\ref{cor:tree2}.
So, we have the following two cases:

\noindent { [Case 1]}
There exists $\vec{u}\prec \vec{w}$ such that $EV^{<\infty}(\vec{u})\subseteq \F_{\U}$. 
Then, $EV^{\infty}(\vec{u})\subseteq \U$.
Indeed, if $\vec{z} = (z_n)_{n\in\nat} \in EV^{\infty}(\vec{u})$, then 
$(z_1,\ldots,z_n)\in \F_{\U}$ for every $n\in\nat$. 
Hence, for each $n\in\nat$ there exists $\vec{s}_n\in\U$ such that 
$(z_1,\ldots,z_n)\propto \vec{s}_n$. 
Since $\U$ is pointwise closed, 
we have that $\vec{z}\in\U$ and consequently that $EV^{\infty}(\vec{u})\subseteq \U$.

\noindent { [Case 2]}
There exists $\vec{u}\prec \vec{w}$ such that for every $\vec{u}_1 \prec \vec{u}$ there exists an initial segment 
of $\vec{u}_1$ which belongs to $L^{<\infty}(\Sigma, \vec{k} ; \upsilon)\setminus \F_{\U}$. 
Hence, $EV^{\infty}(\vec{u})\subseteq L^{\infty}(\Sigma, \vec{k} ; \upsilon) \setminus \U$.
\end{proof}

\begin{remark}\label{rem. D8}
(i) The particular case of Theorem~\ref{cor:blockNW}, where the sequence $\vec{k}=(k_n)_{n\in\nat}\subseteq \nat$ satisfies the condition $k_n=k_1$ for every $n\in\nat$, implies Bergelson-Blass-Hindman's Theorem 6.1 in \cite{BBH}, while the case $k_n=1$ for $n\in\nat$ of Theorem~\ref{block-NashWilliams2} coincides with Theorem 4.6 in \cite{FN1}.
\end{remark}
 
We will give now a partition theorem for infinite orderly sequences of unlocated $\omega$-words, as a consequence of Theorem~\ref{cor:blockNW}. We set 

$W^\infty (\Sigma, \vec{k}; \upsilon) = \{\vec{w} = (w_n)_{n\in\nat} : w_n\in W(\Sigma, \vec{k} ; \upsilon)$ and  $w_n<w_{n+1} \ \forall\ n\in\nat\}$.
\newline
For $\vec{u}\in W^\infty (\Sigma, \vec{k}; \upsilon)$ we define

$RV^{\infty}(\vec{u}) = \{\vec{w}= (w_n)_{n\in\nat} \in W^\infty (\Sigma, \vec{k}; \upsilon): w_n\in RV(\vec{u}) \ \forall\ n\in\nat \}$.
\newline
We say that a family $\U\subseteq W^{\infty}(\Sigma, \vec{k}; \upsilon)$ is {\em pointwise closed\/} iff $\{x_{\sigma(\vec{s})} :\vec{s}\in \U\}$ is closed in $\{0,1\}^{[D]^{<\omega}}$ with the product topology, where $D=\{(n,\alpha): n\in \nat, \alpha \in \{\upsilon, \alpha_1, \alpha_2, \ldots, \alpha_{k_{n}} \} \}$, and if $\vec{s} = (s_n)_{n\in\nat}\in W^{\infty}(\Sigma, \vec{k}; \upsilon)$, then $x_{\sigma(\vec{s})} \in \{0,1\}^{[D]^{<\omega}}$ is the characteristic function of the subset $\sigma(\vec{s}) = \{s_n : n\in\nat\}$ of $[D]^{<\omega}$.

\begin{thm}
[\textsf{Partition theorem for infinite orderly sequences of variable $\omega$-words}] 
\label{cor:unblockNW}
Let $\Sigma=\{\alpha_1, \alpha_2, \ldots \}$ be an infinite countable alphabet, $\upsilon \notin \Sigma$ a variable and $\vec{k}=(k_n)_{n\in\nat}\subseteq \nat$ an increasing sequence. If $\U \subseteq W^{\infty}(\Sigma, \vec{k} ; \upsilon)$ is a pointwise closed family of infinite sequences of variable $\omega$-words and $\vec{w} \in W^\infty (\Sigma, \vec{k} ; \upsilon)$ an infinite orderly sequence of variable $\omega$-words, then there exists a reduction $\vec{u}=(u_n)_{n\in\nat}$ of $\vec{w}$ such that 

either  $RV^{\infty}(\vec{u})\subseteq \U$, or $RV^{\infty}(\vec{u})\subseteq W^{\infty}(\Sigma, \vec{k} ; \upsilon) \setminus \U$.
\end{thm}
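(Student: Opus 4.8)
The plan is to deduce Theorem~\ref{cor:unblockNW} from the Nash--Williams type theorem for variable $\omega$-located words, Theorem~\ref{cor:blockNW}, by transporting the statement along the map $f : L(\Sigma\cup\{\upsilon\}, \vec{k}) \to W(\Sigma\cup\{\upsilon\}, \vec{k})$ already used in the proofs of Theorems~\ref{thm:block-Ramsey3} and \ref{cor:k-Ramsey words}, which sends a located word $w=w_{n_1}\ldots w_{n_l}$ to the $\omega$-word obtained by filling in with the letter $\alpha_1$ all positions of $\{1,\ldots,n_l\}$ outside $dom(w)$. Recall that $f$ maps variable located words to variable $\omega$-words, satisfies $f(w\star u)=f(w)\star f(u)$ and $f(T_p(w))=T_p(f(w))$, and satisfies $f(w)<f(u)$ whenever $w<u$. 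Two further properties will be used: $f$ is finite-to-one (the preimage of an $\omega$-word $u$ consists of the located words obtained from $u$ by deleting an arbitrary subset of the positions carrying $\alpha_1$), and, $f$ being order preserving, it induces a map $\hat f : L^\infty(\Sigma,\vec{k};\upsilon)\to W^\infty(\Sigma,\vec{k};\upsilon)$ by $\hat f(\vec v)=(f(v_n))_{n\in\nat}$.

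First I would attach to the given $\vec w=(w_n)_{n\in\nat}\in W^\infty(\Sigma,\vec{k};\upsilon)$ its ``difference'' located sequence $\vec w^\flat=(w_n^\flat)_{n\in\nat}$, where $w_n^\flat$ is the restriction of $w_n$ to $\{l_{n-1}+1,\ldots,l_n\}$ with $l_n$ the length of $w_n$ and $l_0=0$. Since $w_{n-1}<w_n$ forces the first $l_{n-1}$ letters of $w_n$ to equal $\alpha_1$, each $w_n^\flat$ is a variable located word dominated by $\vec{k}$, the domains are consecutive disjoint blocks, $\vec w^\flat\in L^\infty(\Sigma,\vec{k};\upsilon)$, and $\hat f(\vec w^\flat)=\vec w$. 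Using the $\star$-homomorphism property one checks the key correspondence $\hat f\big(EV^\infty(\vec v)\big)=RV^\infty\big(\hat f(\vec v)\big)$ for every $\vec v\in L^\infty(\Sigma,\vec{k};\upsilon)$: the $f$-image of $T_{p_1}(v_{m_1})\star\cdots\star T_{p_\lambda}(v_{m_\lambda})$ is exactly $T_{p_1}(f(v_{m_1}))\star\cdots\star T_{p_\lambda}(f(v_{m_\lambda}))$, an element of $RV(\hat f(\vec v))$, and every reduced $\omega$-word of $\hat f(\vec v)$ arises this way. In particular $\hat f$ sends extractions of $\vec w^\flat$ to reductions of $\vec w$.

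Next I would pull back the target family, setting $\U^\flat=\hat f^{-1}(\U)\subseteq L^\infty(\Sigma,\vec{k};\upsilon)$. Because $f$ is finite-to-one, the condition $u\in\sigma(\hat f(\vec v))$ is equivalent to $\sigma(\vec v)$ meeting the finite set $f^{-1}(u)$, so $\hat f$ is continuous for the pointwise-convergence topologies and $\U^\flat$ is pointwise closed. Applying Theorem~\ref{cor:blockNW} to $\U^\flat$ and $\vec w^\flat$ produces an extraction $\vec u^\flat\prec \vec w^\flat$ with either $EV^\infty(\vec u^\flat)\subseteq \U^\flat$ or $EV^\infty(\vec u^\flat)\subseteq L^\infty(\Sigma,\vec{k};\upsilon)\setminus\U^\flat$. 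Set $\vec u=\hat f(\vec u^\flat)$, a reduction of $\vec w$; by the previous paragraph $RV^\infty(\vec u)=\hat f\big(EV^\infty(\vec u^\flat)\big)$, so applying $\hat f$ to the dichotomy and using $\hat f(\U^\flat)\subseteq\U$ and $\hat f\big(L^\infty(\Sigma,\vec{k};\upsilon)\setminus\U^\flat\big)\subseteq W^\infty(\Sigma,\vec{k};\upsilon)\setminus\U$ gives $RV^\infty(\vec u)\subseteq\U$ or $RV^\infty(\vec u)\subseteq W^\infty(\Sigma,\vec{k};\upsilon)\setminus\U$, which is the asserted conclusion.

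The two points deserving care are the identity $RV^\infty(\hat f(\vec v))=\hat f(EV^\infty(\vec v))$ and the preservation of pointwise closedness under $\hat f^{-1}$. The first rests on the observation that, since an $\omega$-word carries its own $\alpha_1$-prefix, the word-$\star$-product of possibly non-consecutive blocks $T_{p_i}(u_{n_i})$ coincides on the nose with the $f$-image of the corresponding located $\star$-product; the second is precisely where the finite-to-one property of $f$ is needed. Everything else is a direct transcription along $f$ of Theorem~\ref{cor:blockNW}, so the real obstacle is organizational: arranging $\hat f$ and the decomposition $\vec w^\flat$ so that extractions, reductions and the two partition families correspond exactly, with $RV(\vec u)$ understood in the (non-consecutive) sense matching the definition of $EV$.
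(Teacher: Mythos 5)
Your proposal is correct in substance and follows essentially the same strategy as the paper: pull the partition back along the $\alpha_1$-filling map $f$, verify that the pullback is pointwise closed, apply Theorem~\ref{cor:blockNW}, and push forward. The organizational difference is real but small: the paper first pulls back through a substitution map $\phi:W^\infty(\Sigma,\vec k;\upsilon)\to RV^\infty(\vec w)$ (which plugs the letters of an abstract sequence into \emph{consecutive} blocks of the given $\vec w$) and only then through $\hat f$, whereas you invert $f$ directly by passing to the difference sequence $\vec w^\flat$ and apply Theorem~\ref{cor:blockNW} to $\vec w^\flat$ itself. Your route is arguably cleaner (one transfer map instead of two, and your continuity argument via finiteness of $f^{-1}(u)$ is more explicit than the paper's bare assertion), and the inclusion you actually need, $RV^\infty(\vec u)\subseteq \hat f\bigl(EV^\infty(\vec u^\flat)\bigr)$, does hold, so the dichotomy conclusion is sound. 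The one point where the two constructions genuinely diverge is the status of $\vec u$ as a ``reduction'' of $\vec w$: an extraction $\vec u^\flat\prec\vec w^\flat$ may skip blocks, and a skipped block $w_j$ whose own positions carry constant letters other than $\alpha_1$ cannot be absorbed by any substitution, so your $\vec u=\hat f(\vec u^\flat)$ need not be a reduction in the consecutive-block sense implicit in the definition of $RV^m(\vec w)$ and realized by the paper's $\phi$. You flag this yourself; if ``reduction'' is meant strictly, the fix is to restrict at the last step to extractions of $\vec w^\flat$ that use consecutive blocks starting from the first (equivalently, to compose your argument with the paper's $\phi$), which costs nothing since consecutive extractions form a subfamily of $EV^\infty(\vec u^\flat)$.
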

\begin{proof}
Let $\phi:W^\infty (\Sigma, \vec{k}; \upsilon)\rightarrow RV^\infty(\vec{w})\subseteq W^{\infty}(\Sigma, \vec{k} ; \upsilon)$ be the function with $\phi( (t_n)_{n\in\nat} )=(u_n)_{n\in\nat}$, where, for $t_n=t^n_1\ldots t^n_{\lambda_{n+1}}$ for every $n\in\nat$ and $\lambda_1=0$, we define for $n\in\nat$
\begin{center}
$u_n=T_{p_{\lambda_{n}+1}}(w_{\lambda_{n}+1})\star \ldots \star T_{p_{\lambda_{n+1}}}(w_{\lambda_{n+1}})$, 
\end{center}
where for $\lambda_{n}+1\leq j\leq \lambda_{n+1}$, $p_j=0$ if $t^n_{j}=\upsilon$ and $p_j=\mu$ if $t^n_{j}=\alpha_{\mu}$ for $1\leq \mu \leq k_{j}$.
 
Let the function $f : L(\Sigma\cup\{\upsilon\}, \vec{k}) \longrightarrow W(\Sigma\cup\{\upsilon\}, \vec{k})$ which sends 
$s=s_{n_1}\ldots s_{n_l}\in L(\Sigma\cup\{\upsilon\}, \vec{k})$ to $f(s)=t_1\ldots t_{n_l}\in W(\Sigma\cup\{\upsilon\}, \vec{k})$ where $t_{n_j}=s_{n_j}$ for every $1\leq j\leq l$ and $t_i=\alpha_1$ for every $i\in\{1,\ldots, n_l\}\setminus \{n_j : 1\leq j\leq l \}$. Also, let
\begin{center}
$\tilde{f} : L^\infty (\Sigma, \vec{k} ; \upsilon)\longrightarrow 
W^\infty(\Sigma, \vec{k}; \upsilon)$ with $\hat{f}( (s_n)_{n\in\nat} )=(f(s_n))_{n\in\nat}$.  
\end{center}
The family $\hat{f}^{-1}\big(\phi^{-1}(\U)\big)\subseteq L^\infty(\Sigma, \vec{k} ; \upsilon)$ is  pointwise closed, since the functions $\phi, \hat{f}$ are continuous. So, according to Corollary~\ref{cor:blockNW}, there exists $\vec{s}=(s_n)_{n\in\nat}\in L^\infty (\Sigma, \vec{k} ; \upsilon)$ such that 
either  $EV^{\infty}(\vec{s})\subseteq \hat{f}^{-1}\big(\phi^{-1}(\U)\big)$, or $EV^{\infty}(\vec{u})\subseteq L^{\infty}(\Sigma, \vec{k} ; \upsilon) \setminus \hat{f}^{-1}\big(\phi^{-1}(\U)\big)$.
\newline
Set $t_n=f(s_n)$ for every $n\in\nat$ and $\vec{t}=(t_n)_{n\in\nat}\in W^{\infty}(\Sigma, \vec{k} ; \upsilon)$. Then 
\begin{center}
either $RV^{\infty}(\vec{t})\subseteq (\phi)^{-1}(\U)$, or $RV^{\infty}(\vec{t})\subseteq W^\infty (\Sigma, \vec{r}; \upsilon)\setminus(\phi)^{-1}(\U)$. 
\end{center}
Let $t_n=t^n_{1}\ldots t^n_{q_n}$ for every $n\in\nat$. Set $\lambda_1=0$ and 
$u_n=T_{p_{\lambda_{n}+1}}(w_{\lambda_{n}+1})\star \ldots \star T_{p_{\lambda_{n+1}}}(w_{\lambda_{n+1}})$ for every $n\in\nat$, 
where, for $\lambda_{n}+1\leq j \leq \lambda_{n+1}$, $p_j=0$ if $t^n_{j}=\upsilon$ and $p_j=\mu$ if $t^n_{j}=\alpha_{\mu}$ for some $1\leq \mu \leq k_{j}$. Then $\vec{u}=(u_n)_{n\in\nat}$ is a reduction of $\vec{w}$ such that 

either  $RV^{\infty}(\vec{u})\subseteq \U$, or $RV^{\infty}(\vec{u})\subseteq W^{\infty}(\Sigma, \vec{k} ; \upsilon) \setminus \U$.  
\end{proof}

\begin{remark}\label{rem. g}
(i) Carlson's partition theorem (Theorem 2 in \cite{C}) is the particular case of Theorem~\ref{cor:unblockNW} where the sequence $\vec{k}=(k_n)_{n\in\nat}\subseteq \nat$ satisfies the condition $k_n=k_1$ for every $n\in\nat$. 
\end{remark}

As a consequence of Theorem~\ref{cor:blockNW} we will state and prove partition theorems for the infinite sequences in a commutative and in a noncommutative semigroup, respectively, which are simultaneous extensions of the infinitary partition theorem of Milliken \cite{M}, Taylor \cite{T} and of van der Waerden theorem \cite{vdW} for semigroups. For a sequence $(x_n)_{n\in\nat}$ in a semigroup $(X,+)$, we set
\begin{center}
$\big[FS\big((x_n)_{n\in\nat}\big)\big]^\omega = \{(y_n)_{n\in\nat} : y_n\in FS\big((x_n)_{n\in\nat}\big)$ and $y_n<y_{n+1}$ for every $n\in\nat\}$.
\end{center}
For $y=x_{n_1}+\ldots+x_{n_\lambda}, z=x_{m_1}+\ldots+x_{m_\nu}\in FS\big((x_n)_{n\in\nat}\big)$ we write $y<z$ if $n_\lambda<m_1$.  For a set $X$ let $X^{\nat}=\{(x_n)_{n\in\nat} : x_n\in X\}$. We endow the set $X^{\nat}$ with the product topology (equivalently by the pointwise convergence topology).

\begin{thm}
\label{cor:centralNW}
Let $(X,+)$ be a commutative semigroup, $\vec{k}=(k_n)_{n \in \nat}\subseteq \nat$ an increasing sequence and $(y_{l,n})_{n \in \nat}$ for every $l \in \nat,$ sequences in $X$. If $\U \subseteq X^{\nat}$ is a pointwise closed family of $X^{\nat}$, then there exist sequences $(E_n)_{n \in \nat}\subseteq [\nat]^{<\omega}_{>0},$ with $E_n<E_{n+1}$ for every $n \in \nat$ and $(H_n)_{n \in \nat}\subseteq [\nat]^{<\omega}_{>0},$  with $H_n\subseteq E_{n}$ for every $n \in \nat$ and a sequence $(\beta_n)_{n \in \nat}\subseteq X$ with $\beta_n=\sum_{j\in E_n \setminus H_n}y_{l^n_j,j}$ for $1\leq l^n_j \leq k_j$ such that for every function $f: \nat\rightarrow \nat$ with $f(n)\leq k_n$ for every $n\in\nat$
\newline
either  $\big[ FS\big((\beta_{n}+\sum_{t \in H_{n}}y_{f(n), t})_{n\in\nat}\big)\big]^\omega\subseteq \U$, 
or $\big[ FS\big((\beta_{n}+\sum_{t \in H_{n}}y_{f(n), t})_{n\in\nat}\big)\big]^\omega\subseteq X^{\nat}\setminus\U$.
\end{thm}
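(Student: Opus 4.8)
The plan is to derive the theorem from the Nash--Williams type partition theorem for infinite orderly sequences of variable $\omega$-located words (Theorem~\ref{cor:blockNW}), by pushing $\U$ into the word setting through the homomorphism $g:L(\nat,\vec{k})\to X$, $g(w_{n_1}\ldots w_{n_l})=\sum_{i=1}^{l}y_{w_{n_i},n_i}$ over the alphabet $\Sigma=\nat$, which satisfies $g(u_1\star u_2)=g(u_1)+g(u_2)$ whenever $u_1<u_2$. The point that makes a single output sequence $\vec{u}$ work for every $f$ at once is that $\star_{n\in S}T_{f(n)}(u_n)$ is again an extracted $\omega$-located word of $\vec{u}$, so the $\omega$-located word mechanism packages all the substitutions $T_p$ simultaneously.

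First I would set up the dictionary between the two sides. Suppose Theorem~\ref{cor:blockNW} has produced $\vec{u}=(u_n)_{n\in\nat}\in L^\infty(\nat,\vec{k};\upsilon)$; replacing it by the extraction $\big(T_1(u_{3n-1})\star u_{3n}\star T_1(u_{3n+1})\big)_{n\in\nat}\prec\vec{u}$ (as in the proof of Corollary~\ref{thm:van der Waerden}) we may assume the first and last letter of each $u_n$ lie in $\nat$. Put $E_n=dom(u_n)$, $H_n=\{j\in E_n:(u_n)_j=\upsilon\}$ and $\beta_n=\sum_{j\in E_n\setminus H_n}y_{(u_n)_j,j}$, so $\emptyset\ne H_n\subsetneq E_n$ and $\max E_n<\min E_{n+1}$. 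Since $\min H_n\ge\min dom(u_n)\ge n$ and $\vec{k}$ is increasing, for every $f:\nat\to\nat$ with $f(n)\le k_n$ one has $T_{f(n)}(u_n)\in E(\vec{u})$ and $g(T_{f(n)}(u_n))=\beta_n+\sum_{t\in H_n}y_{f(n),t}=:x^f_n$. For finite sets $S_1<S_2<\cdots$ in $\nat$, $g(u_1\star u_2)=g(u_1)+g(u_2)$ gives $\sum_{n\in S_m}x^f_n=g\big(\star_{n\in S_m}T_{f(n)}(u_n)\big)$ with $\star_{n\in S_m}T_{f(n)}(u_n)\in E(\vec{u})$; hence every element of $[FS((x^f_n)_{n\in\nat})]^\omega$ has the form $(g(v_m))_{m\in\nat}$ with $v_m\in E(\vec{u})$, $v_1<v_2<\cdots$. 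Conversely, using $E(\vec{u})=\bigcup_{p\ge 1}T_p(EV(\vec{u}))$, every infinite orderly sequence $\vec{v}=(v_m)_{m\in\nat}$ of extracted $\omega$-located words of $\vec{u}$ arises this way from a suitable $f$ (take $f(n)$ to be the value used at index $n$ in the decomposition of whichever $v_m$ uses it, and $f(n)=1$ otherwise). Writing $E^\infty(\vec{u})$ for the set of such $\vec{v}$ and $\hat{g}(\vec{v})=(g(v_m))_{m\in\nat}$, we thus get $\bigcup_f[FS((x^f_n)_n)]^\omega=\hat{g}(E^\infty(\vec{u}))$ and $[FS((x^f_n)_n)]^\omega\subseteq\hat{g}(E^\infty(\vec{u}))$ for each $f$. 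So it suffices to arrange $\hat{g}(E^\infty(\vec{u}))\subseteq\U$ or $\hat{g}(E^\infty(\vec{u}))\subseteq X^\nat\setminus\U$.

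To arrange this I would apply Theorem~\ref{cor:blockNW} to the family $\U'$ of all $\vec{z}=(z_m)_m\in L^\infty(\nat,\vec{k};\upsilon)$ such that $(g(T_{p_m}(z_m)))_m\in\U$ for \emph{all} $(p_m)\in\nat^\nat$; this $\U'$ is pointwise closed, being an intersection of preimages of $\U$ under the continuous maps $\vec{z}\mapsto(g(T_{p_m}(z_m)))_m$, and $E^\infty(\vec{u})=\{(T_{p_m}(z_m))_m:\vec{z}\in EV^\infty(\vec{u}),\,(p_m)\in\nat^\nat\}$. In the first alternative of the dichotomy, $EV^\infty(\vec{u})\subseteq\U'$ forces $\hat{g}(E^\infty(\vec{u}))\subseteq\U$, and we are done with the first horn for every $f$. \emph{The main obstacle is the second alternative} $EV^\infty(\vec{u})\cap\U'=\emptyset$, which only says that each $\vec{z}$ fails $\U'$ for \emph{some} substitution; to upgrade it I would exploit that $\{T_p(z):p\in\nat\}$ is finite for each word $z$ (as $T_p(z)$ replaces the variable at $j$ by $\alpha_{\min(p,k_j)}$), so that the existential family $\U''$ of $\vec{z}$ with $(g(T_{p_m}(z_m)))_m\in\U$ for \emph{some} $(p_m)$ is also pointwise closed (a compactness argument: $\vec{z}\notin\U''$ iff some finite initial product of the finite coordinate sets $\{g(T_p(z_m)):p\}$ already misses the open set $X^\nat\setminus\U$), and then apply Theorem~\ref{cor:blockNW} once more to $\U''$ with base $\vec{u}$; its ``$\cap\U''=\emptyset$'' alternative is exactly $\hat{g}(E^\infty(\vec{u}'))\cap\U=\emptyset$, which finishes the second horn, while its other alternative, taken together with the second alternative of the first application, leaves only a ``mixed'' situation that I expect to handle by iterating once more, or which becomes harmless once one asks, as the statement does, merely for an $f$-by-$f$ dichotomy. (A cleaner alternative is to bypass $\U',\U''$ and invoke the non-variable $E^\infty$-analogue of Theorem~\ref{cor:blockNW} — obtained by running the tree and Cantor--Bendixson machinery of Theorem~\ref{block-NashWilliams2} with $E^{<\infty}$ in place of $EV^{<\infty}$ — directly on the pointwise closed set $\hat{g}^{-1}(\U)$.) Once $\vec{u}$ with $\hat{g}(E^\infty(\vec{u}))$ monochromatic is in hand, the sequences $E_n,H_n,\beta_n$ defined in the second paragraph are as required. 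I expect the real care to lie in the two pointwise-closedness verifications and in the bookkeeping of which alternative is in force; the translation in the middle paragraph is routine.
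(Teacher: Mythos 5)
Your reduction to Theorem~\ref{cor:blockNW} via $g$ and $\hat g$, and your identification of the crux --- that the conclusion concerns the \emph{constant} extracted words $\star_{n\in S}T_{f(n)}(u_n)\in E(\vec u)$ while Theorem~\ref{cor:blockNW} only controls $EV^{\infty}(\vec u)$ --- are both correct, and your translation paragraph (including $E(\vec u)=\bigcup_{p\ge1}T_p(EV(\vec u))$ and the description of each $[FS((x^f_n)_n)]^{\omega}$ inside $\hat g(E^{\infty}(\vec u))$) is sound. The genuine gap is that your main device for crossing from $EV^{\infty}$ to $E^{\infty}$ does not close. After applying Theorem~\ref{cor:blockNW} to $\U'$ (the ``for all substitutions'' family) and then to $\U''$ (the ``for some substitution'' family), the surviving mixed case --- $EV^{\infty}(\vec u)\cap\U'=\emptyset$ together with $EV^{\infty}(\vec u)\subseteq\U''$ --- says only that every variable sequence admits one substitution landing in $\U$ and another landing outside $\U$. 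This decides nothing about a \emph{fixed} $f$: the element of $[FS((x^f_n)_n)]^{\omega}$ attached to a block sequence $(S_m)_{m}$ corresponds to one particular substitution of one particular $\vec z\in EV^{\infty}(\vec u)$, and the mixed case does not say which side it falls on. Iterating the two applications just reproduces the same mixed case (there is no new family to partition by), and the ``$f$-by-$f$'' form of the statement does not rescue you, because applying Theorem~\ref{cor:blockNW} separately to each of the uncountably many $f$ destroys the single common extraction. So your main route is not a proof.

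What is actually needed is exactly your parenthetical alternative: a Nash--Williams dichotomy for $E^{\infty}(\vec u)$ rather than $EV^{\infty}(\vec u)$. Once $E^{\infty}(\vec u)$ is monochromatic for $\hat g^{-1}(\U)$, every $[FS((x^f_n)_n)]^{\omega}\subseteq\hat g(E^{\infty}(\vec u))$ lands on one side simultaneously for all $f$, and your $E_n,H_n,\beta_n$ (with the $T_1(u_{3n-1})\star u_{3n}\star T_1(u_{3n+1})$ normalization) finish the argument. Note, however, that the paper supplies only the $E^{<\infty}$ Ramsey input (Theorem~\ref{thm:block-Ramsey8}) and develops the tree/Cantor--Bendixson/Nash--Williams machinery of Section~3 solely for $EV^{<\infty}$, so this route requires you to actually carry out that transfer rather than merely invoke it. For comparison, the paper's own argument is a single application of Theorem~\ref{cor:blockNW} to $\hat g^{-1}(\U)$, with $g$ extended to variable words by fixing $x_0\in X$ and setting $y_{0,n}=x_0$ (so $\upsilon$ is treated as a letter), after which $E_n,H_n,\beta_n$ are read off the resulting sequence; it does not explicitly address the $EV^{\infty}$-versus-$E^{\infty}$ mismatch that you correctly flag. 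Your diagnosis of where the real work lies is therefore right, but neither your two-family argument nor the undeveloped parenthesis discharges it.
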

\begin{proof}
Let the alphabet $\Sigma=\{1,2,\ldots\}=\nat$, $\upsilon=0$ and $x_0\in X$. We set $y_{0,\;n}=x_0$ for every $n\in\nat$ and let the function $g:L(\Sigma,\vec{k}; \upsilon)\rightarrow X$ with
\begin{center}
$g(w_{n_1}\ldots w_{n_l})=\sum^{l}_{i=1}y_{w_{n_i},\;n_i}$ for $w_{n_1}\ldots w_{n_l}\in L(\Sigma,\vec{k}; \upsilon).$ 
\end{center}
Let $\hat{g} : L^\infty (\Sigma, \vec{k} ; \upsilon)\longrightarrow X^{\nat}$ with 
$\hat{g}( (w_n)_{n\in\nat} )=(g(w_n))_{n\in\nat}$.  
\newline
The family $\hat{g}^{-1}(\U)\subseteq L^\infty(\Sigma, \vec{k} ; \upsilon)$ is  pointwise closed, since the functions $\hat{g}$ is continuous. So, according to Corollary~\ref{cor:blockNW}, there exists $\vec{w}=(w_n)_{n\in\nat}\in L^\infty (\Sigma, \vec{k} ; \upsilon)$ such that 
\begin{center}
either  $EV^{\infty}(\vec{w})\subseteq \hat{g}^{-1}(\U)$, or $EV^{\infty}(\vec{w})\subseteq L^{\infty}(\Sigma, \vec{k} ; \upsilon) \setminus \hat{g}^{-1}(\U)$.
\end{center}
Let $w_n=w^{n}_{q^{n}_1}\ldots w^{n}_{q^{n}_{l_n}}$ for every $n \in \nat$. We can suppose that $w^{n}_{q^{n}_1}, w^{n}_{q^{n}_{l_n}}\in \Sigma$ for every $n \in \nat$. Otherwise replace the sequence $(w_n)_{n\in\nat}$ by the sequence $(u_n)_{n\in\nat}$, where $u_n=T_1(w_{3n-1})\star w_{3n}\star T_1(w_{3n+1})$. We set 
$E_n =\{q^{n}_i:\; 1\leq i\leq l_n\}$ the domain of $w_n$, 
$H_n=\{q^{n}_i \in E_n: w^{n}_{q^{n}_{i}}=\upsilon \}$ and 
$\beta_n=\sum_{j\in E_n\setminus H_n}y_{w^n_j,j}\in X$.
\end{proof}
Analogously, can be proved a partition theorem for the infinite sequences in a noncommutative semigroup. 
\begin{thm}
\label{cor:ncentralNW}
Let $(X,+)$ be a noncommutative semigroup, $\vec{k}=(k_n)_{n \in \nat}\subseteq \nat$ an increasing sequence and $(y_{l,n})_{n \in \nat}$ for every $l \in \nat,$ sequences in $X$. If $\U \subseteq X^{\nat}$ is a pointwise closed family of $X^{\nat}$, then there exist sequences $(E_n)_{n \in \nat}\subseteq [\nat]^{<\omega}_{>0},$ with $E_n<E_{n+1}$ for every $n \in \nat$, $(m_n)_{n\in\nat}$, $E^n_1<\cdots <E^n_{m_n+1}$ such that $E_n=E^n_1\cup\ldots \cup E^n_{m_n+1}$ for every $n\in\nat$, $H^n_1,\ldots, H^n_{m_n}$ with $H^n_i \subseteq E^n_i$ for every $1\leq i\leq m_n$, $n \in \nat$ and $\beta^n_{1},\ldots,\beta^n_{m_n+1}\subseteq X$ with $\beta^n_i=\sum_{j\in E^n_i \setminus H^n_i}y_{l^n_j,j}$ with $1\leq l^n_j \leq k_j$ if $1\leq i\leq m_n$ and 
$\beta^n_{m_n+1}=\sum_{j\in E_{m_n+1}}y_{l^n_j,j}$ with $1\leq l^n_j \leq k_j$ for every $n \in \nat$, such that for every function $f: \nat\rightarrow \nat$ with $f(n)\leq k_n$ for every $n\in\nat$

either  $\Big[ FS\Big(\big( (\sum^{m_{n}}_{i=1}(\beta^{n}_{i}+\sum_{t \in H^{n}_{i}}y_{f(n),\;t}))+\beta^{n}_{m_{n}+1}\big)_{n\in\nat}\Big)\Big]^\omega\subseteq \U$, 

or $\Big[ FS\Big(\big( (\sum^{m_{n}}_{i=1}(\beta^{n}_{i}+\sum_{t \in H^{n}_{i}}y_{f(n),\;t}))+\beta^{n}_{m_{n}+1}\big)_{n\in\nat}\Big)\Big]^\omega\subseteq X^{\nat}\setminus\U$.
\end{thm}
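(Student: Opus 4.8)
The plan is to derive Theorem~\ref{cor:ncentralNW} from Corollary~\ref{cor:blockNW} exactly along the lines of the commutative case (Theorem~\ref{cor:centralNW}), the one genuinely new ingredient being the noncommutative block decomposition of a substituted variable word recorded in the display preceding Corollary~\ref{thm:van der Waerden}. First I would take the alphabet $\Sigma=\{1,2,\ldots\}=\nat$ and the variable $\upsilon=0$, fix some $x_0\in X$, put $y_{0,n}=x_0$ for all $n$, and define $g:L(\Sigma,\vec{k};\upsilon)\rightarrow X$ by $g(w_{n_1}\ldots w_{n_l})=y_{w_{n_1},n_1}+\cdots+y_{w_{n_l},n_l}$, the sum taken in the order of increasing positions; it is precisely the need to keep this sum ordered that forces the more elaborate data $(E^n_i,H^n_i,\beta^n_i)$ into the statement. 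One still has $g(u_1\star u_2)=g(u_1)+g(u_2)$ for $u_1<u_2$, as that is mere associativity and uses no commutativity. As in the proof of Theorem~\ref{cor:centralNW}, the induced map $\hat g:L^\infty(\Sigma,\vec{k};\upsilon)\rightarrow X^{\nat}$, $\hat g((w_n)_n)=(g(w_n))_n$, is continuous for the product topologies, hence $\hat g^{-1}(\U)$ is pointwise closed, and Corollary~\ref{cor:blockNW} applied to $\hat g^{-1}(\U)$ and $\vec e$ yields $\vec{w}=(w_n)_n\in L^\infty(\Sigma,\vec{k};\upsilon)$ with either $EV^\infty(\vec w)\subseteq\hat g^{-1}(\U)$ or $EV^\infty(\vec w)\subseteq L^\infty(\Sigma,\vec{k};\upsilon)\setminus\hat g^{-1}(\U)$. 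Replacing $(w_n)_n$ by the extraction $(T_1(w_{3n-1})\star w_{3n}\star T_1(w_{3n+1}))_n$ I may assume every $w_n$ has its first and last letter in $\Sigma$; this normalization is exactly what makes the outer sums $\beta^n_1$ and $\beta^n_{m_n+1}$ nonempty.

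Next I would run the block decomposition on each $w_n=w^n_{q^n_1}\ldots w^n_{q^n_{l_n}}$, with $E_n:=dom(w_n)$. Let $H^n_1<\cdots<H^n_{m_n}$ be the maximal runs of consecutive $\upsilon$-positions of $w_n$ (so $m_n\geq 1$), let $E^n_i$ consist of $H^n_i$ together with the run of non-$\upsilon$ positions immediately to its left (all non-$\upsilon$ positions left of $H^n_1$ when $i=1$), and let $E^n_{m_n+1}$ be the run of non-$\upsilon$ positions to the right of $H^n_{m_n}$; then $E_n=E^n_1\cup\cdots\cup E^n_{m_n+1}$ with $E^n_1<\cdots<E^n_{m_n+1}$, $\emptyset\neq H^n_i\subseteq E^n_i$, and, thanks to the normalization, $E^n_i\setminus H^n_i\neq\emptyset$ and $E^n_{m_n+1}\neq\emptyset$. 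With $\beta^n_i=\sum_{j\in E^n_i\setminus H^n_i}y_{w^n_j,j}$ for $1\leq i\leq m_n$ and $\beta^n_{m_n+1}=\sum_{j\in E^n_{m_n+1}}y_{w^n_j,j}$, the noncommutative identity preceding Corollary~\ref{thm:van der Waerden} gives, for every $p$ with $1\leq p\leq k_{\min E_n}$,
\begin{center}
$g(T_p(w_n))=\big(\sum_{i=1}^{m_n}(\beta^n_i+\sum_{t\in H^n_i}y_{p,t})\big)+\beta^n_{m_n+1}$.
\end{center}

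Finally I would verify the two alternatives. Fix $f:\nat\rightarrow\nat$ with $f(n)\leq k_n$; since $\vec k$ is increasing and $\min E_n\geq n$ along an orderly sequence, $f(n)\leq k_n\leq k_{\min E_n}$, so the displayed identity applies at $p=f(n)$ and produces exactly the $n$-th term $c_n:=\big(\sum_{i=1}^{m_n}(\beta^n_i+\sum_{t\in H^n_i}y_{f(n),t})\big)+\beta^n_{m_n+1}$ of the sequence in the statement. Given any $\vec z=(z_j)_j\in[FS((c_n)_n)]^\omega$, write $z_j=\sum_{n\in F_j}c_n$ with disjoint finite $F_1<F_2<\cdots$ in $\nat$ and $F_j=\{n^j_1<\cdots<n^j_{r_j}\}$; by iterated associativity of $g$ along $\star$ (the $\star$-order coincides with the increasing order of positions, so commutativity is never invoked), $z_j=g(v_j)$ where $v_j:=T_{f(n^j_1)}(w_{n^j_1})\star\cdots\star T_{f(n^j_{r_j})}(w_{n^j_{r_j}})$. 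Each $v_j$ is a constant extracted word of $\vec w$ and $v_j<v_{j+1}$, so $(v_j)_j\in E^\infty(\vec w)$ and $\vec z=\hat g((v_j)_j)$. The remaining — and only delicate — step is to conclude $\vec z\in\U$ under the first alternative (and $\vec z\notin\U$ under the second): Corollary~\ref{cor:blockNW} is phrased for the variable sequences $EV^\infty(\vec w)$, whereas the $v_j$ are constant words, so one must know that the homogenization behind Corollary~\ref{cor:blockNW} simultaneously pins down $E^\infty(\vec w)$ — exactly as in the finitary Theorems~\ref{thm:block-Ramsey1} and ~\ref{thm:block-Ramsey2}, where one and the same extraction makes both the variable and the constant extracted words monochromatic at once, and exactly as this point is dispatched in the proof of Theorem~\ref{cor:centralNW}. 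Everything else is the same bookkeeping as in the commutative case, carried out with all sums kept in their natural order.
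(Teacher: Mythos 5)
The paper itself offers no proof of Theorem~\ref{cor:ncentralNW} beyond the remark that it ``can be proved analogously'' to Theorem~\ref{cor:centralNW}, and your proposal follows exactly that intended route: the same function $g$ (with $\upsilon=0$, $y_{0,n}=x_0$), the same appeal to Theorem~\ref{cor:blockNW} applied to $\hat g^{-1}(\U)$, the same normalization $u_n=T_1(w_{3n-1})\star w_{3n}\star T_1(w_{3n+1})$, and then the noncommutative block decomposition into runs $E^n_1<\cdots<E^n_{m_n+1}$, $H^n_i\subseteq E^n_i$, matching the identity for $g(T_p(w))$ displayed before Corollary~\ref{thm:van der Waerden}. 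All of that bookkeeping is correct, including the domination check $f(n)\le k_n\le k_{\min E_n}$.

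The problem is the step you yourself single out as delicate, and which you then defer rather than prove. Theorem~\ref{cor:blockNW} gives a dichotomy for $EV^{\infty}(\vec u)$, i.e.\ for infinite orderly sequences of \emph{variable} extracted words (each term must use the parameter $0$ somewhere). The sequences $(v_j)_{j\in\nat}$ you need to control, with $v_j=T_{f(n^j_1)}(w_{n^j_1})\star\cdots\star T_{f(n^j_{r_j})}(w_{n^j_{r_j}})$ and every $f(n)\ge 1$, consist of \emph{constant} extracted words, so $(v_j)_j\notin EV^{\infty}(\vec w)$ and the dichotomy you invoked is silent about them. Your stated resolution --- that ``this point is dispatched in the proof of Theorem~\ref{cor:centralNW}'' --- does not hold: the paper's proof of the commutative case stops after defining $E_n,H_n,\beta_n$ and never addresses this mismatch either, so you are deferring to an argument that does not exist. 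To close the gap you need the constant-word analogue of the Nash--Williams theorem: an extraction $\vec u\prec\vec w$ such that either every increasing sequence drawn from $E(\vec u)$ lies in $\hat g^{-1}(\U)$ or every such sequence lies in its complement. This is obtainable by rerunning the machinery of Section~3 (the tree/Cantor--Bendixson argument of Theorem~\ref{block-NashWilliams2} and the deduction of Theorem~\ref{cor:blockNW}) with $E^{<\infty}$ in place of $EV^{<\infty}$, taking Theorem~\ref{thm:block-Ramsey8} rather than Theorem~\ref{thm:block-Ramsey} as the base dichotomy; but as written, neither your proof nor the paper's supplies this, and without it the final ``either/or'' conclusion is not justified.
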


\section{The characterization of Ramsey partitions of infinite sequences of variable $\omega$-located words }

As a consequence of Theorem~\ref{block-NashWilliams2} we prove, in Theorem~\ref{thm:Ellentuck} below, a partition theorem for infinite sequences of variable $\omega$-located words stronger than Theorem~\ref{cor:blockNW} involving a topology $\Tau_E$ on the space $L^{\infty}(\Sigma, \vec{k} ; \upsilon)$ stronger than the relative topology of 
$L^{\infty}(\Sigma, \vec{k} ; \upsilon)$ with respect to the product topology of $\{0,1\}^{[D]^{<\omega}}$. 
A consequence of Theorem~\ref{thm:Ellentuck} (together with 
Corollary~\ref{cor:Ellentuck2}) is the characterization of Ramsey partitions of $L^{\infty}(\Sigma, \vec{k} ; \upsilon)$ in terms of the Baire property 
in the topology $\Tau_E$ (in Theorem~\ref{cor:Baireproperty}). Using Theorem~\ref{cor:Baireproperty} can be proved an analogous characterization of the Ramsey partitions of the set $W^{\infty}(\Sigma, \vec{k} ; \upsilon)$ of all the sequences of variable $\omega$-words. 

We will define below the topology $\Tau_E$ on the space $L^{\infty}(\Sigma, \vec{k}; \upsilon)$, an 
analogue of the Ellentuck topology on $\nat$ defined in \cite{E}.

\begin{defn}\label{def:Ellentuck}
Let $\Sigma=\{\alpha_1, \alpha_2, \ldots \}$ be an infinite countable alphabet, $\upsilon \notin \Sigma$ a variable and $\vec{k}=(k_n)_{n\in\nat}\subseteq \nat$ an increasing sequence.  We define the topology $\Tau_E$ on $L^{\infty}(\Sigma, \vec{k} ; \upsilon)$ as the topology with basic open sets of 
the form:
\begin{equation*}
[\bt ,\vec{s}] =  \{\vec{w} \in L^{\infty}(\Sigma, \vec{k} ; \upsilon): \bt \propto \vec{w}\quad\text{and}
\quad \vec{w} - \bt \prec \vec{s}\}\ ,
\end{equation*}
where $ \bt \in L^{<\infty}(\Sigma, \vec{k} ; \upsilon)$ and
$\vec{s}\in L^{\infty}(\Sigma, \vec{k} ; \upsilon)$.

The topology $\Tau_E$ is stronger than the relative topology of 
$L^{\infty}(\Sigma, \vec{k} ; \upsilon)$ with respect to the product topology of $\{0,1\}^{[D]^{<\omega}}$, which has basic open sets of the form 

$[\bt,\vec{e}] = \{\vec{w} \in L^{\infty}(\Sigma, \vec{k} ; \upsilon): \bt\propto \vec{w}\}$ 
\newline
where $\vec{e}=(e_n)_{n\in\nat}$ with $e_n : \{n\}\rightarrow \{\upsilon\}$ for every $n\in \nat$. 

We denote by $\hat\U$ and $\U^\lozenge$ the closure and the interior 
respectively of a family $\U\subseteq L^{\infty}(\Sigma, \vec{k} ; \upsilon)$ in the topology 
$\Tau_E$. 
Then it is easy to see that 
\begin{equation*}
\begin{split}
&\hat\U = \{\vec{w}\in L^{\infty}(\Sigma, \vec{k} ; \upsilon): [\bt ,\vec{w}]\cap \U\ne\emptyset
\ \text{ for every }\  \bt\propto \vec{w}\}\ ;\ \text{ and}\\
&\U^\lozenge = \{\vec{w}\in L^{\infty}(\Sigma, \vec{k} ; \upsilon):\ \text{ there exists }
\bt\propto \vec{w}\ \text{ such that }\ 
[\bt,\vec{w}] \subseteq \U\}\ .
\end{split}
\end{equation*}
\end{defn}

If $ \bt = (t_1,\ldots,t_k) \in L^{<\infty}(\Sigma, \vec{k} ; \upsilon)$ and  $ \bs = (s_1,\ldots,s_l) \in L^{<\infty}(\Sigma, \vec{k} ; \upsilon)$ with  $t_k < s_1$, then we set $ \bt \odot \bs = (t_1,\ldots,t_k,s_1,\ldots,s_l) \in L^{<\infty}(\Sigma, \vec{k} ; \upsilon)$.      

\begin{thm}\label{thm:Ellentuck}
Let $\Sigma=\{\alpha_1, \alpha_2, \ldots \}$ be an infinite countable alphabet, $\upsilon \notin \Sigma$ a variable, $\vec{k}=(k_n)_{n\in\nat}\subseteq \nat$ an increasing sequence,  $\U\subseteq L^{\infty}(\Sigma, \vec{k} ; \upsilon)$, $\bt\in L^{<\infty}(\Sigma, \vec{k} ; \upsilon)$ and $\vec{w}\in L^{\infty}(\Sigma, \vec{k} ; \upsilon)$. 
Then
\begin{itemize}
\item[{}] either there exists $\vec{u}\prec \vec{w}$ such that $[\bt,\vec{u}]\subseteq \hat\U$,
\item[{}] or there exists a countable ordinal $\xi_0= \zeta_{(\bs,\vec{w})}^{\U}$ 
such that for every $\xi>\xi_0$ there exists $\vec{u}\prec \vec{w}-\bt$ with 
$[\bt \odot \bs,\vec{u}]\subseteq L^{\omega}(\Sigma, \vec{k} ; \upsilon)\setminus \U$ for every 
$\bs \in L^\xi(\Sigma, \vec{k} ; \upsilon) \cap EV^{<\infty}(\vec{u})$.
\end{itemize}
\end{thm}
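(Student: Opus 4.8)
The plan is to run an Ellentuck-style combinatorial-forcing argument whose combinatorial engine is Corollary~\ref{cor:tree2} (hence ultimately Theorem~\ref{block-NashWilliams2}), with the ordinal $\xi_0$ read off from the strong Cantor--Bendixson index that already appears there. Fix $\bt$ and $\vec{w}$. For a finite orderly $\bs$ with $\bt<\bs$ and an infinite $\vec{s}$, say $\bs$ is \emph{live on} $\vec{s}$ if $[\bt\odot\bs,\vec{s}\,']\cap\U\ne\emptyset$ for all $\vec{s}\,'\prec\vec{s}$, and \emph{dead on} $\vec{s}$ if $[\bt\odot\bs,\vec{s}\,]\cap\U=\emptyset$. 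The first, and hardest, step is a \emph{fusion}: construct $\vec{u}_0\prec\vec{w}-\bt$ so that every $\bs\in EV^{<\infty}(\vec{u}_0)$ is \emph{decided} on $\vec{u}_0-\bs$, i.e.\ is live on $\vec{u}_0-\bs$ or dead on $\vec{u}_0-\bs$.

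For a single word $t$ this is Lemma~\ref{lem:block-Ramsey} applied to $\R=\{(t,\vec{s})\in\Pi:(t)\text{ is live or dead on }\vec{s}\}$: condition (i) holds because ``$(t)$ not live on $\vec{s}$'' supplies $\vec{s}\,'\prec\vec{s}$ with $[\bt\odot(t),\vec{s}\,']\cap\U=\emptyset$, on which $(t)$ is dead; condition (ii) holds because ``live'' and ``dead'' both pass to extractions of the second coordinate. To decide arbitrary finite contexts $\bs$, one iterates the diagonal argument over the successive finite blocks in the usual fusion fashion; the essential point is monotonicity --- once a context $\bs$ has been made dead on some extraction it stays dead on all finer ones, and likewise for live --- so the limit sequence $\vec{u}_0$ decides every $\bs\in EV^{<\infty}(\vec{u}_0)$ and a dead $\bs$ is dead on $\vec{u}_0-\bs$ itself. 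I expect the bookkeeping of this fusion --- and the tail/extraction identities (of the form: $[\bt\odot\bs,\vec{s}\,']$ coincides with the basic set over the tail of $\vec{s}\,'$ past $\bs$) that it rests on --- to be the most delicate part.

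Next put $\F=\{\bs\in L^{<\infty}(\Sigma,\vec{k};\upsilon):\bs=\emptyset\text{ or }\bs\text{ is live on }\vec{u}_0-\bs\}$. Because $\vec{u}_0$ is decided, $\F$ is a \emph{tree}: if $\bs_1\propto\bs\in EV^{<\infty}(\vec{u}_0)$ with $\bs_1$ dead, then the extra block $\bs\setminus\bs_1$ is an extraction of $\vec{u}_0-\bs_1$, so $[\bt\odot\bs,\vec{s}\,']\subseteq[\bt\odot\bs_1,\vec{u}_0-\bs_1]$ for every $\vec{s}\,'\prec\vec{u}_0-\bs$, whence $\bs$ is dead. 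Apply Corollary~\ref{cor:tree2} to $\F$ and $\vec{u}_0$. In case (i) there is $\vec{u}\prec\vec{u}_0$ with $EV^{<\infty}(\vec{u})\subseteq\F$; then $\emptyset$ is live too (otherwise every nonempty $\bs\in EV^{<\infty}(\vec{u}_0)$ would be dead), and for $\vec{w}\,'\in[\bt,\vec{u}]$ every prefix of $\vec{w}\,'$ is either shorter than $\bt$ (reduce to $\bt$ via $[\bt,\vec{w}\,']\subseteq[\bt',\vec{w}\,']$) or of the form $\bt\odot\bs$ with $\bs\in EV^{<\infty}(\vec{u})$ live; in either case $[\bt',\vec{w}\,']\cap\U\ne\emptyset$, so $\vec{w}\,'\in\hat\U$ by the closure description in Definition~\ref{def:Ellentuck}. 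This yields the first horn.

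In case (ii), $\F_h\cap EV^{<\infty}(\vec{u}_0)$ is pointwise closed and $\xi_0:=\zeta^{\F}_{\vec{u}_0}$ is a countable ordinal; for every $\xi>\xi_0$ Corollary~\ref{cor:tree2}(ii) gives $\vec{u}\prec\vec{u}_0$ such that every $\bs\in L^{\xi}(\Sigma,\vec{k};\upsilon)\cap EV^{<\infty}(\vec{u})$ --- being, by thinness of $L^{\xi}$ (Proposition~\ref{prop:thinfamily}), the unique $L^{\xi}$-initial segment of some $\vec{u}_1\prec\vec{u}$ --- lies outside $\F$, i.e.\ is dead on $\vec{u}_0-\bs$. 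Since $\vec{u}\prec\vec{u}_0$ forces $\vec{u}-\bs\prec\vec{u}_0-\bs$, we get $[\bt\odot\bs,\vec{u}]\subseteq[\bt\odot\bs,\vec{u}_0-\bs]\subseteq L^{\infty}(\Sigma,\vec{k};\upsilon)\setminus\U$ for all such $\bs$, and $\vec{u}\prec\vec{u}_0\prec\vec{w}-\bt$; this is the second horn with $\xi_0=\zeta^{\F}_{\vec{u}_0}$, a quantity depending only on $\bt$ and $\vec{w}$ (so the statement's subscript $(\bs,\vec{w})$ is read as $(\bt,\vec{w})$, and the second horn's $L^{\omega}$ as $L^{\infty}$; the borderline case in which $\emptyset$ is already dead on $\vec{u}_0$, forcing $\F\cap EV^{<\infty}(\vec{u}_0)=\{\emptyset\}$, is itself an instance of case (ii)).
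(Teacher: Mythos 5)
Your proposal is correct and follows essentially the same route as the paper: the paper's proof also runs combinatorial forcing with the decided/live/dead alternatives (its family $\R_{\U}$), obtains the fusion sequence via its Lemma~\ref{lem:Ellentuck} (which is exactly the iterated diagonal argument over finite blocks that you sketch), forms the same tree $\F$ of live contexts, and concludes with the dichotomy of Theorem~\ref{block-NashWilliams2} (of which your Corollary~\ref{cor:tree2} is the immediate restatement). Your readings of the statement's typos ($\zeta^{\U}_{(\bs,\vec{w})}$ depending only on $(\bt,\vec{w})$, and $L^{\omega}$ as $L^{\infty}$) agree with what the paper's proof actually establishes.
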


We will give the proof after the following lemma which 
is analogous to Lemma~\ref{lem:block-Ramsey}. 

\begin{lem}\label{lem:Ellentuck}
Let $\R \subseteq\{[\bt,\vec{s}] :\bt\in L^{<\infty}(\Sigma, \vec{k} ; \upsilon)$ and 
$\vec{s}\in L^{\infty}(\Sigma, \vec{k} ; \upsilon)\}$ with the properties:

(i) for every $(\bt,\vec{s})\in L^{<\infty}(\Sigma, \vec{k} ; \upsilon)\times L^{\infty}(\Sigma, \vec{k} ; \upsilon)$ there exists $\vec{s}_1\prec \vec{s}$ such that $[\bt,\vec{s}_1]\in\R$; and 

(ii) for every $[\bt,\vec{s}]\in\R$ and $\vec{s}_1\prec \vec{s}$ we have 
$[\bt,\vec{s}_1] \in\R$.
\newline
Then, for every $(\bt,\vec{w})\in L^{<\infty}(\Sigma, \vec{k} ; \upsilon)\times L^{\infty}(\Sigma, \vec{k} ; \upsilon)$ there exists $\vec{s}\in [\bt,\vec{w}]$ such that $[\bt \odot \bs,\vec{u}]\in\R$ 
for every $\bs\in EV^{<\infty}(\vec{s} - \bt)$ and $\vec{u}\prec \vec{s} - \bt$.
\end{lem}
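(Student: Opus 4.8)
The plan is to prove Lemma~\ref{lem:Ellentuck} by a diagonalization that is the direct Ellentuck-topology analogue of the proof of Lemma~\ref{lem:block-Ramsey}: there the ``labels'' were single extracted words $t$, while here they become the finite orderly sequences $\bt\odot\bs$ obtained by appending to $\bt$ an extracted finite orderly sequence $\bs$. First I would reduce to the situation in which everything constructed lies above $\bt$, by replacing $\vec{w}$ with $\vec{w}-\bt$; applying hypothesis (i) to the pair $(\bt,\vec{w}-\bt)$ produces $\vec{r}_0\prec\vec{w}-\bt$ with $[\bt,\vec{r}_0]\in\R$, which both starts the recursion and settles the degenerate label $\bt\odot\emptyset=\bt$.

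Next I would run the recursion. Assume $\vec{r}_0\succ\vec{r}_1\succ\cdots\succ\vec{r}_n$ and words $v_1<\cdots<v_n$ have been chosen with $v_i$ the first term of $\vec{r}_{i-1}$, such that $[\bt\odot\bs,\vec{r}_n]\in\R$ for every $\bs$ in the finite set $EV^{<\infty}((v_1,\ldots,v_n))$ (which contains $\emptyset$). Put $v_{n+1}$ equal to the first term of $\vec{r}_n$, list the finitely many new labels $EV^{<\infty}((v_1,\ldots,v_{n+1}))\setminus EV^{<\infty}((v_1,\ldots,v_n))$ as $\bs_1,\ldots,\bs_m$, and apply hypothesis (i) $m$ times in succession, starting from $\vec{r}_n-v_{n+1}$, to obtain $\vec{r}_n-v_{n+1}\succ\vec{r}_{n+1}^{\,1}\succ\cdots\succ\vec{r}_{n+1}^{\,m}=:\vec{r}_{n+1}$ with $[\bt\odot\bs_j,\vec{r}_{n+1}^{\,j}]\in\R$ for $1\le j\le m$. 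Hypothesis (ii) then upgrades this to $[\bt\odot\bs_j,\vec{r}_{n+1}]\in\R$ for all $j$, and the same hypothesis applied to the old labels (using $\vec{r}_{n+1}\prec\vec{r}_n$) gives $[\bt\odot\bs,\vec{r}_{n+1}]\in\R$ for every $\bs\in EV^{<\infty}((v_1,\ldots,v_{n+1}))$, closing the induction.

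Finally I would put $\vec{s}=\bt\odot(v_1,v_2,\ldots)$ and verify the conclusion. The membership $\vec{s}\in[\bt,\vec{w}]$ is immediate, since $\bt\propto\vec{s}$ and $\vec{s}-\bt=(v_n)_{n\in\nat}\prec\vec{r}_0\prec\vec{w}-\bt\prec\vec{w}$ (each $v_n\in EV(\vec{r}_{n-1})\subseteq EV(\vec{r}_0)$ and $v_1<v_2<\cdots$). Now fix $\bs\in EV^{<\infty}(\vec{s}-\bt)$ and $\vec{u}\prec\vec{s}-\bt$. For $\bs=\emptyset$ the claim $[\bt,\vec{u}]\in\R$ follows from $[\bt,\vec{r}_0]\in\R$, $\vec{u}\prec(v_n)_n\prec\vec{r}_0$ and (ii). For $\bs=(z_1,\ldots,z_l)\ne\emptyset$, let $N$ be the largest index for which $v_N$ is a constituent of the extraction defining some $z_j$; since $z_1<\cdots<z_l$, only $z_l$ can involve $v_N$, so $\bs\in EV^{<\infty}((v_1,\ldots,v_N))$ (hence $[\bt\odot\bs,\vec{r}_N]\in\R$) and $\max dom(z_l)\ge\max dom(v_N)$. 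The basic open set $[\bt\odot\bs,\vec{u}]$ depends only on the tail $\vec{u}'=\vec{u}-z_l$ of $\vec{u}$ lying above $z_l$ --- every word of $EV(\vec{u})$ whose least element exceeds $\max dom(z_l)$ already belongs to $EV(\vec{u}')$ --- and the same domain comparison shows $\vec{u}'\prec(v_{N+1},v_{N+2},\ldots)\prec\vec{r}_N$; so $[\bt\odot\bs,\vec{u}]=[\bt\odot\bs,\vec{u}']\in\R$ by (ii), as required.

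I expect the only genuinely new ingredient --- everything else being a transcription of the proof of Lemma~\ref{lem:block-Ramsey}, with the finiteness of $EV^{<\infty}(\bw)$ replacing that of $EV(\bw)$ --- to be this closing matching step: relating an arbitrary $\vec{u}\prec\vec{s}-\bt$ to the particular extraction $\vec{r}_N$ manufactured at the stage where $\bs$ first became available. This is where one must spell out carefully the observation that $[\bt\odot\bs,\vec{u}]$ is insensitive to replacing $\vec{u}$ by its part above $\max dom(\bs)$, together with the order bookkeeping between the $v_i$'s and the $z_j$'s.
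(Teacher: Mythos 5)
Your proposal is correct and follows essentially the same diagonalization as the paper's proof: at each stage one uses hypothesis (i) finitely many times to handle the finite set of labels $\bt\odot\bs$ with $\bs$ extracted from the words chosen so far, uses (ii) to pass to the common refinement, and at the end matches an arbitrary $\vec{u}\prec\vec{s}-\bt$ with the stage at which $\bs$ first appeared via the identity $[\bt\odot\bs,\vec{u}]=[\bt\odot\bs,\vec{u}-\bs]$. The only (immaterial) differences are that you process just the newly created labels at each stage rather than the whole of $EV^{<\infty}$ of the initial segment, and that you spell out the final matching step more explicitly than the paper does.
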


\begin{proof}
Let $ \bt = (t_1,\ldots,t_k) \in L^{<\infty}(\Sigma, \vec{k} ; \upsilon)$ and $\vec{w}\in L^{\infty}(\Sigma, \vec{k} ; \upsilon)$.
We can assume that $\vec{w} - \bt = \vec{w}$. 
According to the assumption~(i), there exists $\vec{s}_1\prec \vec{w}$ such that 
$[\bt,\vec{s}_1] \in\R$. 
Assume that $\vec{s}_n\prec \cdots \prec \vec{s}_1 \in L^{\infty}(\Sigma, \vec{k} ; \upsilon)$ have been constructed and 
$\vec{s}_n = (s_i^n)_{i\in\nat}$ for every $n\in\nat$.

Set $\{\bs_1,\ldots,\bs_\lambda\} = EV^{<\infty}((s_1^1,\ldots, s_n^n))$. 
According to (i), there exist $\vec{s}_{n+1}^1 \prec \vec{s}_n - s_n^n$  such that  
$[\bt \odot \bs_1,\vec{s}_{n+1}^1]\in\R$,  $\vec{s}_{n+1}^2 \prec \vec{s}_{n+1}^1$  such that 
$[\bt \odot \bs_2,\vec{s}_{n+1}^2]\in \R$, and finally $\vec{s}_{n+1}^\lambda \prec \vec{s}_{n+1}^{\lambda-1}\prec \vec{s}_n-s_n^n$ 
such that  $[\bt \odot \bs_\lambda,\vec{s}_{n+1}^\lambda]\in \R$. 
Set $\vec{s}_{n+1} = \vec{s}_{n+1}^\lambda = (s_i^{n+1})_{i\in\nat}$. 
Then, according to (ii), $[\bt \odot \bs_i,\vec{s}_{n+1}]\in\R$ for every 
$1\le i\le \lambda$. 

Set $\vec{s} = (t_1,\ldots,t_k,s_1^1,s_2^2,\ldots)\in L^{\infty}(\Sigma, \vec{k} ; \upsilon)$. 
Then $\vec{s} \in [\bt,\vec{w}]$.
Let $\bs\in EV^{<\infty}(\vec{s} - \bt)$ with $\bs\ne\emptyset$. 
If $n_0 = \min \{n\in\nat: \bs\in EV^{<\infty}((s_1^1,\ldots,s_n^n))\}$,
then $[\bt \odot \bs,\vec{s}_{n_0+1}]\in \R$. Let $\vec{u}\prec \vec{s} - \bt$. Then 
$[\bt \odot \bs,\vec{u}] = [\bt \odot \bs,\vec{u} - s_{n_0}^{n_0}]\in \R$, according to assumption~(ii). 
If $\bs =\emptyset$, then $[\bt,\vec{s}_1]\in\R$, hence $[\bt,\vec{u}] \in \R$.
\end{proof}

\begin{proof}[Proof of Theorem~\ref{thm:Ellentuck}] 
Let $\U\subseteq L^{\infty}(\Sigma, \vec{k} ; \upsilon)$, $\bt\in L^{<\infty}(\Sigma, \vec{k} ; \upsilon)$ and $\vec{w}\in L^{\infty}(\Sigma, \vec{k} ; \upsilon)$. 
Set 
\begin{equation*}
\begin{split}
\R_{\U} = &\{[\bw,\vec{s}] :(\bw,\vec{s}) \in L^{<\infty}(\Sigma, \vec{k} ; \upsilon)\times L^{\infty}(\Sigma, \vec{k} ; \upsilon)
\ \text{ and}\\
&\qquad \text{either }\ [\bw,\vec{s}]\cap \U = \emptyset\quad\text{or}\quad 
[\bw,\vec{s}_1]\cap \U\ne\emptyset\ \text{ for every }\ \vec{s}_1\prec \vec{s}\}\ .
\end{split}
\end{equation*}
It is easy to check that $\R_{\U}$ satisfies the assumptions~(i) and (ii) 
of Lemma~\ref{lem:Ellentuck}, 
hence, there exists $\vec{w}_1\in [\bt,\vec{w}]$ such that $[\bt \odot \bs,\vec{w}_1]\in\R_{\U}$ 
for every $\bs\in EV^{<\infty}(\vec{w}_1 - \bt)$. Set
\begin{equation*}
\F = \{\bs\in EV^{<\infty}(\vec{w}_1 - \bt) : [\bt \odot \bs ,\vec{w}_2] \cap \U\ne \emptyset\text{ for every } \vec{w}_2\prec \vec{w}_1\}\ .
\end{equation*}
The family $\F$ is a tree. 
Indeed, let $\bs \in \F$ and $\bs_1 \propto\bs$.
Then $[\bt \odot \bs_1,\vec{w}_1] \in\R_{\U}$, since $\bs_1 \in EV^{<\infty}(\vec{w}_1 - \bt)$.
So, either $[\bt \odot \bs_1,\vec{w}_1]\cap \U=\emptyset$, which is impossible, since 
$[\bt \odot \bs,\vec{w}_1]\cap \U\ne\emptyset$, or $[\bt \odot \bs_1,\vec{w}_2]\cap \U
\ne\emptyset$ for every $\vec{w}_2\prec \vec{w}_1$. 
Hence, $\bs_1 \in \F$. 

We use Theorem~\ref{block-NashWilliams2} for $\F$ and 
$\vec{w}_1 - \bt$.
We have the following cases:

\noindent {\bf [Case 1]} 
There exists $\vec{u}\prec \vec{w_1} - \bt \prec \vec{w}$ such that 
$EV^{<\infty}(\vec{u}) \subseteq \F$. 
This gives that $[\bt \odot \bs,\vec{u}_1]\cap \U\ne \emptyset$ for every 
$\bs \in EV^{<\infty}(\vec{u})$ and $\vec{u}_1 \prec \vec{u}$, 
which implies that $[\bt,\vec{u}] \subseteq\hat\U$.

\noindent {\bf [Case 2]} 
There exists a countable ordinal $\xi_0 = \zeta_{(\bs,\vec{w})}^{\U}$ such that for every $\xi>\xi_0$ there exists 
$\vec{u}\prec \vec{w}_1-\bt\prec \vec{w}-\bt$ with 
$L^\xi(\Sigma, \vec{k} ; \upsilon) \cap EV^{<\infty}(\vec{u}) \subseteq L^{<\infty}(\Sigma, \vec{k} ; \upsilon)\setminus\F$.
Then $[\bt * \bs,\vec{u}] \subseteq L^{\infty}(\Sigma, \vec{k} ; \upsilon)\setminus \U$ for every 
$\bs \in L^\xi(\Sigma, \vec{k} ; \upsilon) \cap EV^{<\infty}(\vec{u})$.
\end{proof}

Applying Theorem~\ref{thm:Ellentuck} to partitions $\U$ that are closed 
(the pointwise closed families are included in this class) or meager in the 
topology $\Tau_E$, we consider the following consequences.

\begin{cor}\label{cor:Ellentuck}
Let $\U$ be a closed subset of $L^{\infty}(\Sigma, \vec{k} ; \upsilon)$ for the topology $\Tau_E$, $\bt \in L^{<\infty}(\Sigma, \vec{k} ; \upsilon)$ and $\vec{w}\in L^{\infty}(\Sigma, \vec{k} ; \upsilon)$. 
Then 

 either there exists $\vec{u}\prec\vec{w}$ such that $[\bt,\vec{u}]\subseteq \U$, 

 or there exists a countable ordinal $\xi_0 = \zeta_{(\bs,\vec{w})}^{\U}$, 
such that for every $\xi>\xi_0$ there exists $\vec{u}\prec \vec{w}-\bt$ such that 
$[\bt \odot \bs,\vec{u}]\subseteq L^{\infty}(\Sigma, \vec{k} ; \upsilon)\setminus \U$ for every 
$\bs \in L^\xi(\Sigma, \vec{k} ; \upsilon) \cap EV^{<\infty}(\vec{u})$.
\end{cor}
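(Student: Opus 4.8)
The plan is to obtain Corollary~\ref{cor:Ellentuck} as an immediate specialization of Theorem~\ref{thm:Ellentuck}. First I would notice that the two dichotomies differ only in their first horn: Theorem~\ref{thm:Ellentuck} produces $\vec{u}\prec\vec{w}$ with $[\bt,\vec{u}]\subseteq\hat\U$, where $\hat\U$ is the closure of $\U$ in the topology $\Tau_E$, whereas here we want the stronger conclusion $[\bt,\vec{u}]\subseteq\U$. Hence the whole of the proof reduces to the observation that, when $\U$ is assumed to be closed in $\Tau_E$, we have $\hat\U=\U$ by definition of the $\Tau_E$-closure operator (recall from Definition~\ref{def:Ellentuck} that $\hat\U=\{\vec{w} : [\bt,\vec{w}]\cap\U\neq\emptyset \text{ for every } \bt\propto\vec{w}\}$). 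In particular this applies to pointwise closed families, since the relative product topology is coarser than $\Tau_E$, so every pointwise closed $\U$ is $\Tau_E$-closed.

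Concretely I would argue as follows. Apply Theorem~\ref{thm:Ellentuck} to the given $\U$, $\bt$ and $\vec{w}$. If its first alternative holds, there is $\vec{u}\prec\vec{w}$ with $[\bt,\vec{u}]\subseteq\hat\U=\U$, which is exactly the first alternative of the corollary. If its second alternative holds, there is a countable ordinal $\xi_0=\zeta_{(\bs,\vec{w})}^{\U}$ such that for every $\xi>\xi_0$ there exists $\vec{u}\prec\vec{w}-\bt$ with $[\bt\odot\bs,\vec{u}]\subseteq L^{\infty}(\Sigma,\vec{k};\upsilon)\setminus\U$ for every $\bs\in L^\xi(\Sigma,\vec{k};\upsilon)\cap EV^{<\infty}(\vec{u})$; this is verbatim the second alternative of the corollary, and the proof is complete.

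There is no genuine obstacle here, since the substantive work has already been carried out in Theorem~\ref{thm:Ellentuck} via Lemma~\ref{lem:Ellentuck} and Theorem~\ref{block-NashWilliams2}. The only point to keep straight is a minor one: the set $L^{\omega}(\Sigma,\vec{k};\upsilon)$ appearing in the printed statement of the second horn of Theorem~\ref{thm:Ellentuck} should be read as $L^{\infty}(\Sigma,\vec{k};\upsilon)$ (as it is in the proof of that theorem, Case~2), so that the difference $L^{\infty}(\Sigma,\vec{k};\upsilon)\setminus\U$ is meaningful, $\U$ being a family of infinite orderly sequences; with that reading the two second horns coincide and the deduction is purely formal.
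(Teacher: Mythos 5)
Your proof is correct and matches the paper's intent exactly: the corollary is stated there as an immediate consequence of Theorem~\ref{thm:Ellentuck}, and the only observation needed is that $\hat\U=\U$ when $\U$ is $\Tau_E$-closed, which converts the first horn into the stated one while the second horn is unchanged. Your remark that $L^{\omega}(\Sigma,\vec{k};\upsilon)$ in the printed second horn should read $L^{\infty}(\Sigma,\vec{k};\upsilon)$ is also a correct reading of a typo.
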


\begin{cor}\label{cor:Ellentuck2}
Let $\U$ be a subset  of $L^{\infty}(\Sigma, \vec{k} ; \upsilon)$ meager in the topology $\Tau_E$, 
$\bt \in L^{<\infty}(\Sigma, \vec{k} ; \upsilon)$ and $\vec{w}\in L^{\infty}(\Sigma, \vec{k} ; \upsilon)$. 
Then, there exists a countable ordinal $\xi_0$ such that for every $\xi>\xi_0$
there exists $\vec{u}\prec \vec{w}-\bt$ such that $[\bt \odot \bs,\vec{u}]\subseteq L^{\infty}(\Sigma, \vec{k} ; \upsilon)\setminus \U$ for every 
$\bs \in L^\xi(\Sigma, \vec{k} ; \upsilon) \cap EV^{<\infty}(\vec{u})$.
\end{cor}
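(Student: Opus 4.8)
The plan is to first replace the statement by a cleaner, $\xi$-free one, and then to run a Mathias-type fusion over a decomposition of $\U$ into closed nowhere dense sets.

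First I would record the following \emph{covering identity}: for every $\vec{u}\in L^{\infty}(\Sigma, \vec{k} ; \upsilon)$, every $\bt\in L^{<\infty}(\Sigma, \vec{k} ; \upsilon)$ whose last word precedes all words of $\vec{u}$, and every countable $\xi\geq 1$,
\[
[\bt,\vec{u}]\;=\;\bigcup\{\,[\bt\odot\bs,\vec{u}] : \bs\in L^\xi(\Sigma, \vec{k} ; \upsilon)\cap EV^{<\infty}(\vec{u})\,\}.
\]
Here $\supseteq$ is immediate from the definitions, using that $\bs\in EV^{<\infty}(\vec{u})$ and $\vec{v}\prec\vec{u}$ force $\bs\odot\vec{v}\prec\vec{u}$ (transitivity of $\prec$, i.e.\ of the inclusion of $EV$-sets); for $\subseteq$, given $\vec{v}\in[\bt,\vec{u}]$ one applies the canonical representation of $\vec{v}-\bt\prec\vec{u}$ with respect to $L^\xi(\Sigma, \vec{k} ; \upsilon)$ (Proposition~\ref{prop:canonicalrep}(i)) and lets $\bs$ be its first $L^\xi$-block, which lies in $EV^{<\infty}(\vec{u})$. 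The identity shows that ``$[\bt\odot\bs,\vec{u}]\subseteq L^{\infty}(\Sigma, \vec{k} ; \upsilon)\setminus\U$ for all $\bs\in L^\xi(\Sigma, \vec{k} ; \upsilon)\cap EV^{<\infty}(\vec{u})$'' is equivalent to the $\xi$-free condition ``$[\bt,\vec{u}]\cap\U=\emptyset$''. Hence it suffices to produce a single $\vec{u}\prec\vec{w}-\bt$ with $[\bt,\vec{u}]\cap\U=\emptyset$; then any countable ordinal $\xi_0$ (e.g.\ $\xi_0=0$) witnesses the Corollary. (The same identity, incidentally, shows that the second alternatives of Theorem~\ref{thm:Ellentuck} and Corollary~\ref{cor:Ellentuck} are likewise equivalent to ``$[\bt,\vec{u}]\cap\U=\emptyset$'' for the relevant $\vec{u}$.)

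Next, since $\U$ is meager in $\Tau_E$, I would write $\U\subseteq\bigcup_{n\in\nat}\U_n$ with each $\U_n$ closed in $\Tau_E$ and nowhere dense (closing up each nowhere dense set). The per-piece engine is: for a closed nowhere dense $\U_n$ and arbitrary $\bt'\in L^{<\infty}(\Sigma, \vec{k} ; \upsilon)$, $\vec{v}\in L^{\infty}(\Sigma, \vec{k} ; \upsilon)$ with $\bt'$ preceding $\vec{v}$, the first alternative of Corollary~\ref{cor:Ellentuck} applied to $(\U_n,\bt',\vec{v})$ is impossible, because a set $[\bt',\vec{u}]$ is nonempty and $\Tau_E$-open and so cannot be contained in the closed nowhere dense $\U_n$; thus the second alternative holds and, via the covering identity, yields some $\vec{u}\prec\vec{v}$ with $[\bt',\vec{u}]\cap\U_n=\emptyset$. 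Using this I would construct $\vec{u}=(u_1,u_2,\dots)\prec\vec{w}-\bt$ together with a $\prec$-decreasing chain of work spaces $\vec{s}_0=\vec{w}-\bt\succ\vec{s}_1\succ\vec{s}_2\succ\cdots$ with $u_{m+1}$ the first word of $\vec{s}_m$ and $\vec{s}_{m+1}\prec\vec{s}_m-u_{m+1}$, maintaining the invariant: for all $n\leq m$ and all $\bs\in EV^{<\infty}((u_1,\dots,u_m))$ one has $[\bt\odot\bs,\vec{s}_m]\cap\U_n=\emptyset$. At stage $m+1$, since $EV((u_1,\dots,u_{m+1}))$ is finite, only finitely many pairs $(\bs,\U_j)$ with $j\leq m+1$ remain to be handled (the others being covered by the previous invariant and the monotonicity of $[\bt\odot\bs,\cdot]\cap\U_j$ in $\prec$); handling them one at a time with the engine ($\bt'=\bt\odot\bs$) and shrinking produces $\vec{s}_{m+1}$. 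Finally, if $\vec{v}\in[\bt,\vec{u}]\cap\U$ then $\vec{v}\in\U_n$ for some $n$; writing $\vec{v}-\bt=(r_1,r_2,\dots)\prec\vec{u}$, pick the least $l$ with $r_l$ involving some $u_i$, $i\geq n$, let $M$ be the largest index with $u_M$ occurring in $\bs:=(r_1,\dots,r_l)$, note $M\geq n$ and $\bs\in EV^{<\infty}((u_1,\dots,u_M))$, so the invariant gives $[\bt\odot\bs,\vec{s}_M]\cap\U_n=\emptyset$; but $(r_{l+1},r_{l+2},\dots)$ involves only $u_i$ with $i>M$, hence $(r_{l+1},r_{l+2},\dots)\prec(u_{M+1},u_{M+2},\dots)\prec\vec{s}_M$ and $\vec{v}\in[\bt\odot\bs,\vec{s}_M]$ --- a contradiction. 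Thus $[\bt,\vec{u}]\cap\U=\emptyset$, and by the first paragraph this proves the Corollary.

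The step I expect to be the main obstacle is the bookkeeping in the fusion. A single finite one-block extension $\bt\odot\bs$ of $\bt$ must be kept disjoint from \emph{all} of the pieces $\U_n$, in particular from those whose index exceeds the stage at which $\bs$ first becomes available; this is why at stage $m$ one must re-shrink the work space against $\U_m$ for \emph{every} $\bs\in EV^{<\infty}((u_1,\dots,u_m))$, not only the newly created ones --- exactly the subtlety in the classical Ellentuck/Mathias fusion, which here is kept finitary at each stage only because the sets $EV(\bw)$ are finite. Everything else (transitivity of $\prec$, the two monotonicity facts, and the verification that the constructed $\vec{u}$ satisfies $\vec{u}\prec\vec{w}-\bt$) is routine.
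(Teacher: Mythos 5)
Your argument is correct, and it in fact yields a conclusion formally stronger than the one stated: a single $\vec{u}\prec\vec{w}-\bt$ with $[\bt,\vec{u}]\cap\U=\emptyset$, which (since $[\bt\odot\bs,\vec{u}]\subseteq[\bt,\vec{u}]$ for every $\bs\in EV^{<\infty}(\vec{u})$) witnesses the required alternative for every $\xi\geq 1$ simultaneously. The paper's proof differs in organization rather than in substance: it applies Theorem~\ref{thm:Ellentuck} to $\U$ itself and refutes the first alternative by contradiction. Assuming $[\bt,\vec{u}]\subseteq\hat\U$, it writes $\U=\bigcup_n\U_n$ with each $\hat\U_n$ having empty interior and feeds into the already-established fusion Lemma~\ref{lem:Ellentuck} the family $\R$ of all $[\bw,\vec{s}]$ satisfying $[\bw,\vec{s}]\cap\U_k=\emptyset$ for every $k\leq|\bw|$; condition (i) of that lemma is checked by exactly your ``engine'' (the first alternative of Theorem~\ref{thm:Ellentuck} being impossible for a nowhere dense piece), and the resulting $\vec{u}_1$ satisfies $[\bt,\vec{u}_1]\cap\U=\emptyset$, contradicting $\vec{u}_1\in[\bt,\vec{u}]\subseteq\hat\U$. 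So the paper delegates the bookkeeping (``attack $\U_k$ only once $|\bw|\geq k$'') to Lemma~\ref{lem:Ellentuck}, whereas you re-derive that instance of the fusion by hand with the invariant ``all $\bs\in EV^{<\infty}((u_1,\ldots,u_m))$ against all $\U_n$ with $n\leq m$''; the two bookkeeping schemes are interchangeable, and your final contradiction argument (locating the stage $M$ at which a hypothetical $\vec{v}\in[\bt,\vec{u}]\cap\U_n$ is already excluded) is the same verification the paper performs for $\vec{u}_1$. What your write-up adds, and the paper leaves implicit, is the covering identity $[\bt,\vec{u}]=\bigcup\{[\bt\odot\bs,\vec{u}]:\bs\in L^\xi(\Sigma,\vec{k};\upsilon)\cap EV^{<\infty}(\vec{u})\}$ obtained from Proposition~\ref{prop:canonicalrep}; the paper uses this equivalence silently both when converting the second alternative of Theorem~\ref{thm:Ellentuck} for $\U_k$ into ``$[\bw,\vec{s}_1]\cap\U_k=\emptyset$'' and again in the proof of Theorem~\ref{cor:Baireproperty}, so isolating it is a genuine clarification rather than a detour.
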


\begin{proof}
We apply Theorem~\ref{thm:Ellentuck} for $\U$.
We will prove that the first alternative of the dichotomy is impossible. 
Indeed, let $\vec{u}\prec \vec{w}$ such that $[\bt,\vec{u}]\subseteq \hat\U$.  
If $\U= \bigcup_{n\in\nat} \U_n$ with $(\hat\U_n)^\lozenge = \emptyset$ 
for every $n\in\nat$, then we set 
\begin{equation*}
\begin{split}
\R = &\{[\bw,\vec{s}] : \bw \in L^{<\infty}(\Sigma, \vec{k} ; \upsilon),\ \vec{s}\in L^{\infty}(\Sigma, \vec{k} ; \upsilon)
\ \text{ and}\\
&\qquad [\bw,\vec{s}]\cap \U_k =\emptyset\ \text{ for every } \ k\in\nat
\text{ with } k\le |\bw|\}\ ;
\end{split}
\end{equation*}
where $|\bw|$ denotes the cardinality of the set $\sigma(\bw)$. 

The family $\R$ satisfies the conditions (i) and (ii) of 
Lemma~\ref{lem:Ellentuck}. 
Indeed, according to Theorem~\ref{thm:Ellentuck}, for every $\bw\in L^{<\infty}(\Sigma, \vec{k} ; \upsilon)$, $\vec{s}\in L^{\infty}(\Sigma, \vec{k} ; \upsilon)$ and $k\in\nat$ there exists 
$\vec{s}_1\prec \vec{s}$ such that $[\bw,\vec{s}_1]\cap \U_k = \emptyset$, as 
it is impossible $[\bw,\vec{s}_1]\subseteq \hat\U_k$. Thus $\R$ satisfies (i) and 
obviously satisfies (ii). 
Hence, there exists 
$\vec{u}_1\in[\bt,\vec{u}]$ such that 
$[\bt \odot \bs,\vec{u}_1]\in\R$ for every $\bs\in EV^{<\infty}(\vec{u}_1-\bt)$. 
Then, $[\bt,\vec{u}_1]\cap \U=\emptyset$. 
Indeed, let $\vec{u}_2\in [\bt,\vec{u}_1]\cap\U$. 
Then, $\vec{u}_2 \in [\bt,\vec{u}_1] \cap \U_k$ for some $k\in\nat$. 
Hence, there exists $\bs\in EV^{<\infty}(\vec{u}_1-\bt)$ with $\bt\ \odot \bs\propto \vec{u}_2$,
$k\le |\bt \odot \bs|$ and $[\bt \odot \bs,\vec{u}_1]\cap \U_k\ne\emptyset$.
Then, $[\bt \odot \bs,\vec{u}_1]\notin \R$. 
A contradiction, since $\bs\in EV^{<\infty}(\vec{u}_1-\bt)$. 
Thus, $[\bt,\vec{u}_1]\cap \U=\emptyset$ and consequently $\vec{u}_1\notin \hat\U$. But 
$\vec{u}_1 \in [\bt ,\vec{u}] \subseteq\hat\U$, a contradiction. 
Hence, the second alternative of Theorem~\ref{thm:Ellentuck} holds for $\U$. 
\end{proof}

\begin{defn}\label{def:Milliken}
A family $\U\subseteq L^{\infty}(\Sigma, \vec{k} ; \upsilon)$ of infinite orderly sequences of variable $\omega$-located words is called {\em completely 
Ramsey} if for every $\bt\in L^{<\infty}(\Sigma, \vec{k} ; \upsilon)$ and every $\vec{w}\in L^{\infty}(\Sigma, \vec{k} ; \upsilon)$ there exists $\vec{u}\prec \vec{w}$ such that 
\begin{equation*}
\text{either }\ [\bt,\vec{u}]\subseteq \U\quad\text{ or }\quad 
[\bt,\vec{u}]\subseteq L^{\infty}(\Sigma, \vec{k} ; \upsilon) \setminus \U\ .
\end{equation*}
\end{defn}

A consequence of Theorem~\ref{thm:Ellentuck} is the characterization of completely Ramsey families of infinite orderly sequences of variable $\omega$-located words.

\begin{thm}\label{cor:Baireproperty}
Let $\Sigma=\{\alpha_1, \alpha_2, \ldots \}$ be an infinite countable alphabet, $\upsilon \notin \Sigma$ a variable and $\vec{k}=(k_n)_{n\in\nat}\subseteq \nat$ an increasing sequence. A family $\U\subseteq L^{\infty}(\Sigma, \vec{k} ; \upsilon)$ is completely Ramsey if and only if 
$\U$ has the Baire property in the topology $\Tau_E$.
\end{thm}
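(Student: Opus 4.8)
The plan is to run the classical Ellentuck argument, using Theorem~\ref{thm:Ellentuck} and Corollary~\ref{cor:Ellentuck2} as the combinatorial core and routine point-set topology in $(L^{\infty}(\Sigma, \vec{k} ; \upsilon),\Tau_E)$ for the rest. The one recurring technical device I isolate first is a \emph{bridge} from the Schreier-indexed conclusions of Theorem~\ref{thm:Ellentuck} and Corollary~\ref{cor:Ellentuck2} to honest statements about basic $\Tau_E$-open sets: if $\vec{u}\prec\vec{w}-\bt$ satisfies $[\bt\odot\bs,\vec{u}]\subseteq L^{\infty}(\Sigma, \vec{k} ; \upsilon)\setminus\U$ for every $\bs\in L^\xi(\Sigma, \vec{k} ; \upsilon)\cap EV^{<\infty}(\vec{u})$ (for some $\xi\geq1$), then $[\bt,\vec{u}]\cap\U=\emptyset$. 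Indeed, given $\vec{z}\in[\bt,\vec{u}]$ we have $\vec{z}-\bt\prec\vec{u}$, so by the canonical representation of $\vec{z}-\bt$ with respect to $L^\xi(\Sigma, \vec{k} ; \upsilon)$ (Proposition~\ref{prop:canonicalrep}(i)) there is an initial segment $\bs\propto\vec{z}-\bt$ with $\bs\in L^\xi(\Sigma, \vec{k} ; \upsilon)$; the terms of $\bs$ are terms of $\vec{z}-\bt$, hence lie in $EV(\vec{u})$, so $\bs\in EV^{<\infty}(\vec{u})$, and then $\bt\odot\bs\propto\vec{z}$ and $\vec{z}-(\bt\odot\bs)\prec\vec{u}$, i.e. $\vec{z}\in[\bt\odot\bs,\vec{u}]\subseteq L^{\infty}(\Sigma, \vec{k} ; \upsilon)\setminus\U$. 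Fed into Corollary~\ref{cor:Ellentuck2}, the bridge shows that every $\Tau_E$-meager family $\U$ is \emph{completely Ramsey null}: for all $\bt,\vec{w}$ there is $\vec{u}\prec\vec{w}-\bt$ with $[\bt,\vec{u}]\cap\U=\emptyset$; since the sets $[\bt,\vec{w}]$ form a basis for $\Tau_E$, a $\Tau_E$-meager family is therefore $\Tau_E$-nowhere dense (the converse being trivial).

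For ``completely Ramsey $\Rightarrow$ Baire property'' I will show $\U\setminus\U^\lozenge$ is $\Tau_E$-nowhere dense, so that $\U=\U^\lozenge\cup(\U\setminus\U^\lozenge)$ exhibits $\U$ as the symmetric difference of a $\Tau_E$-open set and a $\Tau_E$-meager set. Given a basic open set $[\bt,\vec{w}]$, complete Ramseyness yields $\vec{u}\prec\vec{w}$ with $[\bt,\vec{u}]\subseteq\U$ or $[\bt,\vec{u}]\subseteq L^{\infty}(\Sigma, \vec{k} ; \upsilon)\setminus\U$; using the transitivity of $\prec$ (which amounts to $EV(\vec{u})\subseteq EV(\vec{w})$) one has $[\bt,\vec{u}]\subseteq[\bt,\vec{w}]$, and in the first case the open set $[\bt,\vec{u}]$ lies inside $\U^\lozenge$ while in the second it is disjoint from $\U$; either way $[\bt,\vec{u}]$ is a nonempty basic open subset of $[\bt,\vec{w}]$ missing $\U\setminus\U^\lozenge$. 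Hence $\U\setminus\U^\lozenge$ is $\Tau_E$-nowhere dense and $\U$ has the Baire property.

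For the converse, assume $\U$ has the Baire property in $\Tau_E$ and fix $\bt\in L^{<\infty}(\Sigma, \vec{k} ; \upsilon)$, $\vec{w}\in L^{\infty}(\Sigma, \vec{k} ; \upsilon)$. The key preliminary observation is that the $\Tau_E$-boundary $\hat\U\setminus\U^\lozenge$ is $\Tau_E$-nowhere dense: writing $\U=\mathcal{O}\triangle\mathcal{M}$ with $\mathcal{O}$ $\Tau_E$-open and $\mathcal{M}$ $\Tau_E$-meager, one checks $\hat\U\setminus\U^\lozenge\subseteq(\overline{\mathcal{O}}\setminus\mathcal{O})\cup\overline{\mathcal{M}}$, where $\overline{\mathcal{O}}\setminus\mathcal{O}$ is nowhere dense because $\mathcal{O}$ is open and $\overline{\mathcal{M}}$ is nowhere dense because $\mathcal{M}$ is meager, hence nowhere dense by the first paragraph, and a finite union of nowhere dense sets is nowhere dense. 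Now apply Theorem~\ref{thm:Ellentuck} to $\U$, $\bt$, $\vec{w}$. If its second alternative holds, the bridge gives $\vec{u}\prec\vec{w}-\bt$ with $[\bt,\vec{u}]\subseteq L^{\infty}(\Sigma, \vec{k} ; \upsilon)\setminus\U$, as required. If its first alternative holds, fix $\vec{u}\prec\vec{w}$ with $[\bt,\vec{u}]\subseteq\hat\U$ and apply Theorem~\ref{thm:Ellentuck} to the family $L^{\infty}(\Sigma, \vec{k} ; \upsilon)\setminus\U$ (which also has the Baire property), $\bt$, $\vec{u}$. Its first alternative is impossible: it would produce $\vec{u}'\prec\vec{u}$ with $[\bt,\vec{u}']$ contained in the $\Tau_E$-closure of $L^{\infty}(\Sigma, \vec{k} ; \upsilon)\setminus\U$, that is, in $L^{\infty}(\Sigma, \vec{k} ; \upsilon)\setminus\U^\lozenge$, and also $[\bt,\vec{u}']\subseteq[\bt,\vec{u}]\subseteq\hat\U$, so $[\bt,\vec{u}']\subseteq\hat\U\setminus\U^\lozenge$, contradicting that the latter is nowhere dense while $[\bt,\vec{u}']$ is a nonempty open set. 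Hence the second alternative holds for $L^{\infty}(\Sigma, \vec{k} ; \upsilon)\setminus\U$, and the bridge produces $\vec{u}'\prec\vec{u}-\bt$ (so $\vec{u}'\prec\vec{w}$) with $[\bt,\vec{u}']\subseteq\U$. In every case we obtain $\vec{v}\prec\vec{w}$ with $[\bt,\vec{v}]\subseteq\U$ or $[\bt,\vec{v}]\subseteq L^{\infty}(\Sigma, \vec{k} ; \upsilon)\setminus\U$, so $\U$ is completely Ramsey.

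The heart of the proof, and the only step where the full strength of Section~4 is invoked, is the bridge lemma together with the resulting equivalence ``$\Tau_E$-meager $\iff$ $\Tau_E$-nowhere dense $\iff$ completely Ramsey null''; this is precisely where Theorem~\ref{thm:Ellentuck} and Corollary~\ref{cor:Ellentuck2} enter and where the canonical-representation machinery for the Schreier families $L^\xi(\Sigma, \vec{k} ; \upsilon)$ is essential. The remaining steps are the standard Ellentuck bookkeeping; the only points demanding care are the transitivity of $\prec$ through $EV$ (used to pass between basic open sets $[\bt,\vec{u}]\subseteq[\bt,\vec{w}]$) and the identifications of the $\Tau_E$-closure and interior of complements, for which one relies on the explicit descriptions of $\hat\U$ and $\U^\lozenge$ recorded in Definition~\ref{def:Ellentuck}.
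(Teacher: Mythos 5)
Your proof is correct, and one half of it follows the paper exactly: for ``completely Ramsey $\Rightarrow$ Baire property'' the paper also writes $\U=\U^\lozenge\cup(\U\setminus\U^\lozenge)$ and observes that $\U\setminus\U^\lozenge$ is $\Tau_E$-meager (you give the missing detail that it is in fact nowhere dense, which is what the paper implicitly relies on). Your ``bridge'' is likewise exactly the step the paper performs when it cites Proposition~\ref{prop:canonicalrep} inside the proof: the canonical representation with respect to $L^\xi(\Sigma,\vec{k};\upsilon)$ is what converts the Schreier-indexed conclusion of Theorem~\ref{thm:Ellentuck} and Corollary~\ref{cor:Ellentuck2} into the statement $[\bt,\vec{u}]\cap\U=\emptyset$ about a genuine basic $\Tau_E$-open set.

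Where you genuinely diverge is in the converse direction. The paper decomposes $\U=\B\,\triangle\,\C$ with $\B$ $\Tau_E$-closed and $\C$ $\Tau_E$-meager, first shrinks $[\bt,\vec{w}]$ to $[\bt,\vec{u}_1]\subseteq\C^c$ via the meager-set corollary, and then applies the closed-set dichotomy of Corollary~\ref{cor:Ellentuck} to $\B$ inside $[\bt,\vec{u}_1]$, reading off $[\bt,\vec{u}]\subseteq\B\cap\C^c\subseteq\U$ or $[\bt,\vec{u}]\subseteq\B^c\cap\C^c\subseteq\U^c$. You instead take $\U=\mathcal{O}\,\triangle\,\mathcal{M}$ with $\mathcal{O}$ open, show that the $\Tau_E$-boundary $\hat\U\setminus\U^\lozenge$ is nowhere dense (using the fact, which you correctly derive from Corollary~\ref{cor:Ellentuck2} plus the bridge, that $\Tau_E$-meager sets are nowhere dense), and then apply Theorem~\ref{thm:Ellentuck} twice, to $\U$ and to its complement. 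Your route never uses Corollary~\ref{cor:Ellentuck}, at the cost of a second pass through the main theorem and the small topological lemma $\hat\U\setminus\U^\lozenge\subseteq(\overline{\mathcal{O}}\setminus\mathcal{O})\cup\overline{\mathcal{M}}$; the paper's route is shorter given that both corollaries are already on the table. Both arguments are standard Ellentuck bookkeeping and both are sound; the shared mathematical content is the bridge via canonical representations and the completely-Ramsey-nullity of $\Tau_E$-meager sets.
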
 

\begin{proof} 
Let $\U\subseteq L^{\infty}(\Sigma, \vec{k} ; \upsilon)$ have the Baire property in the topology 
$\Tau_E$. 
Then $\U = \B \triangle  \C  = (\B \cap \C^c) \cup (\C\cap \B^c)$, where 
$\B\subseteq L^{\infty}(\Sigma, \vec{k} ; \upsilon)$ is $\Tau_E$-closed and 
$\C\subseteq L^{\infty}(\Sigma, \vec{k} ; \upsilon)$ is $\Tau_E$-meager $(\C^c= L^{\infty}(\Sigma, \vec{k} ; \upsilon)\setminus\C)$.  
Let $\bt\in L^{<\infty}(\Sigma, \vec{k} ; \upsilon)$ and $\vec{w}\in L^{\infty}(\Sigma, \vec{k} ; \upsilon)$. 
According to Corollary~\ref{cor:Ellentuck} and Proposition~\ref{prop:canonicalrep}, there exists $\vec{u}_1 \prec\vec{w}$ such that $[\bt,\vec{u}_1]\subseteq \C^c$ and according to Corollary~\ref{cor:Ellentuck2} there exists $\vec{u}\prec \vec{u}_1$ such that 

 either $[\bt,\vec{u}] \subseteq \B\cap [\bt,\vec{u}_1]\subseteq 
\B\cap \C^c \subseteq \U$
 or $[\bt,\vec{u}]\subseteq \B^c \cap [\bt,\vec{u}_1] \subseteq 
\B^c \cap \C^c \subseteq \U^c$.
\newline
Hence, $\U$ is completely Ramsey.

On the other hand, if $\U$ is completely Ramsey, then $\U = \U^\lozenge
\cup (\U\setminus \U^\lozenge)$ and $\U\setminus\U^\lozenge$ is a meager set 
in $\Tau_E$. 
Hence $\U$ has the Baire property in the topology $\Tau_E$.
\end{proof}

\section*{Acknowledgments}
Author acknowledge partial support from the Kapodistrias research grant of Athens University.

\bigskip
{\footnotesize
\noindent 
\newline
Address of the author: 
\newline
{\sc Department of Mathematics, Athens University, Panepistemiopolis, 15784 Athens, Greece}
\newline  
E-mail address: vfarmaki@math.uoa.gr

\end{document}